\documentclass[pdftex]{amsart}
\usepackage{amsmath, amssymb, stmaryrd, amsrefs}
\usepackage[all]{xy}
\allowdisplaybreaks
\usepackage{amsthm}
\theoremstyle{definition}
\newtheorem{definition}{Definition}[section]
\theoremstyle{plain}
\newtheorem{theorem}[definition]{Theorem}
\newtheorem{proposition}[definition]{Proposition}
\newtheorem{lemma}[definition]{Lemma}
\newtheorem{corollary}[definition]{Corollary}
\newtheorem{fact}[definition]{Fact}
\theoremstyle{remark}
\newtheorem*{remark}{Remark}
\newtheorem{example}[definition]{Example}
\usepackage[svgnames]{xcolor}
\usepackage{hyperref}
\hypersetup{
  colorlinks=true,
  linkcolor=Blue,
  citecolor=Green,
  pdftitle={},
  pdfpagemode=FullScreen
}
\newcommand\e{{\boldsymbol e}}
\renewcommand\v{{\boldsymbol v}}
\renewcommand\H{{\mathcal H}}
\newcommand\I{{\mathcal I}}
\renewcommand\L{{\mathcal L}}
\newcommand\K{{\mathcal K}}
\renewcommand\O{{\mathcal O}}
\newcommand\U{{\mathcal U}}
\newcommand\N{{\mathbb N}}
\newcommand\Z{{\mathbb Z}}
\newcommand\C{{\mathbb C}}

\newcommand\X{{\mathbb X}}
\newcommand\id{{\mathrm{id}}}
\newcommand\inv[1]{#1^{-1}}

\newcommand\clspan{\mathop{\overline{\mathrm{span}}}}
\newcommand\supp{\mathop{\mathsf{supp}}}
\newcommand\defiff{\overset{\mathrm{def}}{\iff}}
\newcommand\rel[1]{\llbracket#1\rrbracket}
\newcommand\ent{\mathop{\mathrm{ent}}\nolimits}
\newcommand\stacksum[2]{\sum_{\substack{#1\\#2\rule{0em}{1.7ex}}}}
\newcommand\set[2]{\mathopen\lbrace#1\mathbin\vert#2\mathclose\rbrace}
\newcommand\SET[3]{\mathopen{#1\lbrace}#2\mathbin{#1\vert}#3\mathclose{#1\rbrace}}
\newcommand\inn[2]{\langle#1\vert#2\rangle}
\newcommand\INN[3]{\mathopen{#1\langle}#2\mathbin{#1\vert}#3\mathclose{#1\rangle}}
\newcommand\abs[1]{\vert#1\vert}

\newcommand\norm[1]{\Vert#1\Vert}
\newcommand\NORM[2]{\mathopen{#1\Vert}#2\mathclose{#1\Vert}}
\setcounter{tocdepth}{2}
\begin{document}
\title
[Cartan subalgebras of Kajiwara--Watatani algebras]
{Cartan subalgebras of C*-algebras associated\\with complex dynamical systems}
\author[K. Ito]{Kei Ito}
\address{Graduate School of Mathematical Sciences, The University of Tokyo, 3-8-1 Komaba, Tokyo, 153-8914, Japan}
\email{itokei@ms.u-tokyo.ac.jp}
\keywords{Cartan subalgebra, C*-algebra, complex dynamical system, branch point.}
\subjclass[2020]{46L55}
\maketitle

\begin{abstract}
Let $R$ be a rational function with degree $\geq 2$ and $X$ be its Julia set, its Fatou set, or the Riemann sphere.
Suppose that $X$ is not empty.
We can regard $R$ as a continuous map from $X$ onto itself.
Kajiwara and Watatani showed that in the case that $X$ is the Julia set, $C_0(X)$ is a maximal abelian subalgebra of $\O_R(X)$, where $\O_R(X)$ denotes the C*-algebra associated with the dynamical system~$(X,R)$ introduced by them.
In this paper, we develop their result and give the equivalent condition for~$C_0(X)$ to be a Cartan subalgebra of~$\O_R(X)$.
\end{abstract}

\section{Introduction}
\subsection{Historical background}

In this paper, we study a Cartan subalgebra of a C*-algebra associated with a complex dynamical system.

A C*-algebra associated with a complex dynamical system was introduced by Kajiwara and Watatani~\cite{KW05}.
Their aim was to modify the definition of Deaconu and Muhly's algebra.

Deaconu and Muhly~\cite{DM} introduced a C*-algebra associated with a branched covering by using Renault's groupoid method~\cite{R}.
For a branched covering $\sigma\colon X\to X$, their algebra is defined as the groupoid C*-algebra induced by the topological groupoid~$\Gamma(X,T)$, where $T$ is the locally homeomorphic part of~$\sigma$.
Kajiwara and Watatani noted that Deaconu and Muhly used only the locally homeomorphic part of a branched covering.
This means that their algebras forget branch points.
Kajiwara and Watatani tried to create a C*-algebra which remembers branch points and introduced a C*-algebra associated with a complex dynamical system~\cite{KW05} and with a self-similar map~\cite{KW06} by the Cuntz--Pimsner construction~\cite{P}.

The Deaconu and Muhly's algebra for a branched covering $\sigma\colon X\to X$ has the natural Cartan subalgebra as a groupoid C*-algebra.
This is isomorphic to the C*-algebra~$C_0(X)$ of all continuous functions on a space $X$ vanishing at infinity.
Kajiwara and Watatani~\cite{KW17} showed that their algebra for a dynamical system on a space~$X$ also has a similar C*-subalgebra.
This is isomorphic to $C_0(X)$ and maximal abelian.
But it is unsolved whether this subalgebra is a Cartan subalgebra or not.

The notion of Cartan subalgebras were introduced by Kumjian \cite{Ku} and Renault \cite{R1980,R}.
According to Kumjian \cite{Ku}, the Cartan subalgebra was introduced under ``the belief that the structure of a C*-algebra is illuminated by an understanding of the manner in which abelian subalgebras embed in it.’’
By Renault’s theory, for a separable C*-algebra $A$ with a Cartan subalgebra $B$, there exists a topological groupoid $G$ with a unit space $G^{(0)}$ and with twist data $\Sigma$ and an isomorphism from the groupoid C*-algebra $C^*(G,\Sigma)$ onto $A$ carrying $C_0(G^{(0)})$ onto $B$.
Moreover, this twisted groupid $(G,\Sigma)$ is isomorphic to the Weyl twist $(G(B),\Sigma(B))$; and hence unique up to isomorphism.

Let~$R$ be a rational function with degree~$\geq 2$ and $X$ be its Julia set, its Fatou set, or the Riemann sphere.
Suppose that $X$ is not empty.
We can regard $R$ as a continuous map from~$X$ onto itself.
Let~$\O_R(X)$ denote the Kajiwara and Watatani's algebra associated with the dynamical system~$(X,R)$.
Kajiwara and Watatani showed in~\cite{KW17} that $C_0(X)$ is a maximal abelian subalgebra of $\O_R(X)$ in the case that $X$ is the Julia set, and left the problem of whether $C_0(X)$ is a Cartan subalgebra of~$\O_R(X)$ or not.

In this paper, we give the equivalent condition for~$C_0(X)$ to be a Cartan subalgebra of~$\O_R(X)$ and solve the problem.
To achieve the aim, we give an explicit universal covariant representation of Kajiwara--Watatani algebras on Hilbert space which does not require any quotients like the GNS representation.
This could be used for further analysis of Kajiwara–Watatani algebras.

\subsection{Main theorem}

A rational function is a function described as~$P/Q$ for some coprime polynomials~$P$ and~$Q$ and its degree is defined as the larger of the degrees of~$P$ and~$Q$.
A rational function is a holomorphic branched covering from the Riemann sphere to itself.

The Fatou set of a rational function~$R$ is the maximum one of open sets on which the iterations of~$R$ is normal.
The Julia set is the complement of Fatou set.
They are completely invariant, that is, their image and preimage under~$R$ coincides with themselves.

Let~$R$ be a rational function with $\deg\geq 2$ and $X$ be its Julia set, its Fatou set, or the Riemann sphere.
The Kajiwara and Watatani's algebra~$\O_R(X)$ is defined for the complex dynamical system~$(X,R)$.
Kajiwara and Watatani~\cite{KW17} showed that $C_0(X)$ is a maximal abelian subalgebra of~$\O_R(X)$ in the case that $X$ is the Julia set of~$R$.
In this paper, we develop their result and give a simple necessary and sufficient condition for~$C_0(X)$ to be a Cartan subalgebra of~$\O_R(X)$.
The aim of this paper is to prove the following theorem.

\begin{theorem}[Theorem~\ref{4.14}]
  Let~$R$ be a rational function with $\deg\geq 2$ and $X$ be its Julia set, its Fatou set, or the whole of the Riemann sphere.
  Suppose that $X$ is not empty.
  Then, the following are equivalent.
  
  (1)~$C_0(X)$ is a Cartan subalgebra of~$\O_R(X)$.
  
  (2)~$X$ has no branch point of~$R$.
\end{theorem}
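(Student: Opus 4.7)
The plan is to verify Renault's four defining conditions of a Cartan subalgebra, namely that $C_0(X)$ (i) contains an approximate unit of $\O_R(X)$, (ii) is maximal abelian, (iii) is the image of a faithful conditional expectation, and (iv) is regular; and then to argue that regularity is precisely the condition that detects the presence or absence of branch points.

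First I would dispense with the routine conditions. The approximate-unit property is immediate since $C_0(X)$ already carries a bounded approximate unit of $\O_R(X)$. A faithful conditional expectation $\O_R(X)\to C_0(X)$ is provided by integrating over the gauge action of $\T$ on the Cuntz--Pimsner algebra. Maximal abelianness in the Julia set case is the theorem of Kajiwara--Watatani cited above; I would extend it to the Fatou set and Riemann sphere cases by adapting their argument, which uses only structural properties of the Kajiwara--Watatani correspondence valid on any completely invariant closed subset on which $R$ has degree~$\geq 2$.

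For the direction $(2)\Rightarrow(1)$, assume that no point of $X$ is a branch point of $R$. Then $R\colon X\to X$ is a surjective local homeomorphism, and the Kajiwara--Watatani correspondence reduces to the natural bimodule for the Deaconu--Muhly étale groupoid $\Gamma(X,R)$, giving an isomorphism $\O_R(X)\cong C^*(\Gamma(X,R))$ under which $C_0(X)$ corresponds to the unit-space subalgebra. To apply Renault's identification of Cartan subalgebras in groupoid C*-algebras, I would verify topological principality of $\Gamma(X,R)$; in each of the three cases for $X$ this holds because the set of eventually periodic points of a rational map of degree~$\geq 2$ is countable and hence meagre in $X$, so the isotropy of $\Gamma(X,R)$ is trivial on a dense subset.

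For $(1)\Rightarrow(2)$ I would argue the contrapositive. Fix a branch point $p\in X$ with local degree $e\geq 2$. Every normalizer $n$ of $C_0(X)$ implements a partial homeomorphism of $X$: there exist open sets $U,V\subset X$ and a homeomorphism $\varphi\colon U\to V$ such that $nfn^*=(f\circ\inv\varphi)\cdot nn^*$ for $f\in C_0(U)$. I would select a Cuntz--Pimsner generator $S_\xi\in\O_R(X)$ with $\xi$ concentrated near $p$, and show that its compression $\chi S_\xi\chi'$ by bump functions $\chi,\chi'$ localised at $p$ and $R(p)$ cannot be approximated in norm by sums of normalizers, because the $e$-fold ramification encoded by $S_\xi$ cannot be reproduced by any finite collection of single-valued partial homeomorphisms. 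The main obstacle is converting this geometric obstruction into a genuine norm lower bound; I expect to extract it by computing the map on $C_0(X)$ that any hypothetical approximating normalizer would induce near $p$ and contradicting the multiplicity count~$e$.
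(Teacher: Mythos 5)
Your overall architecture matches the paper's: all four of Renault's conditions are checked, and regularity is identified as the one condition sensitive to branch points. Your route to $(2)\Rightarrow(1)$ via the identification $\O_R(X)\cong C^*(\Gamma(X,R))$ and topological principality of the Renault--Deaconu groupoid is a genuinely different (and viable) path from the paper, which instead builds an explicit matrix representation $\rho$ on $\ell^2$ of a residual completely invariant set $\X$, characterizes the normalizer as $C^*(\rho)\cap QM_\X$ (the quasi-monomial matrices), and shows directly that the normalizer generates everything when all the ramification sets $S_{m,n}$ are empty. Your route outsources the work to Renault's theorem, at the cost of having to verify amenability of $\Gamma(X,R)$ (so that the universal Cuntz--Pimsner algebra coincides with the \emph{reduced} groupoid algebra to which Renault's theorem applies); that is true but should be said. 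One smaller issue: averaging over the gauge action gives a faithful conditional expectation onto the fixed-point core, which is strictly larger than $C_0(X)$; you still need a further faithful expectation from the core down to $C_0(X)$ (the paper gets this in one step as the diagonal compression $\delta$ of matrices, Lemma~\ref{4.9}).

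The genuine gap is in $(1)\Rightarrow(2)$, which is the heart of the theorem, and you acknowledge as much. Two problems. First, the heuristic you offer --- that ``the $e$-fold ramification cannot be reproduced by any finite collection of single-valued partial homeomorphisms'' --- is not the actual obstruction: away from the branch point $p$ the map $R$ is still $e$-to-one locally over $R(p)$, and finite sums of normalizers reproduce that multi-sheeted behaviour perfectly well (this is exactly how the paper proves regularity in the unbranched case, Lemma~\ref{4.13}). The obstruction lives only \emph{at} $p$: a single normalizer induces a single-valued partial homeomorphism, so by continuity its ``matrix entry'' along the graph of $R$ must vanish at the point where two sheets collide; this is the content of Lemmas~\ref{4.10} and~\ref{4.11}. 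Second, and decisively, the norm lower bound you defer is the whole argument: one needs a \emph{bounded} linear functional on $\O_R(X)$ (in the paper, evaluation of the first extended entry function at $(p,R(p))$, whose boundedness rests on the quantitative estimate of Proposition~\ref{3.4.2}) that vanishes on every normalizer, hence on the closed $*$-algebra $C^*(N)$, but not on a suitable generator $S_\xi$ with $\xi(p)\neq 0$. Without constructing such a functional and proving its continuity, ``cannot be approximated in norm by sums of normalizers'' remains an expectation rather than a proof, so as it stands the proposal establishes only one implication.
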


\section{Preparations for Operator Algebras}
\subsection{Adjointable operators between right Hilbert modules}

In this sub-subsection, we review adjointable operators between right Hilbert modules. 
See~\cite{B} Part II, Section 7 for more detail.

Let~$A$ be a C*-algebra, $E$ and $F$ be right Hilbert $A$-modules, and $T\colon E\to F$ and $S\colon F\to E$ be linear maps. $S$ is called an \emph{adjointable operator of~$T$} if the equation $\inn{e}{Sf}_E=\inn{Te}{f}_F$ holds for any $e\in E$ and $f\in F$. $T$ is called \emph{adjointable} if there exists an adjoint of~$T$. An adjointable map is a bounded complex-linear operator and it is moreover $A$-linear. Let $\L(E,F)$ denote the set of all adjointable operators from~$E$ to~$F$. $\L(E,F)$ with operator norm is a Banach space.

Let $\Theta(E,F):=\set{\theta_{f,e}}{\text{$e\in E$ and $f\in F$}}\subseteq\L(E,F)$, where $\theta_{f,e}$ is defined as the operator which maps $x\in E$ to~$f\inn{e}{x}_E$. $\Theta(E,F)$ is closed under complex-scalar multiplication. Let $\K(E,F)$ denote the closed subspace of~$\L(E,F)$ generated by $\Theta(E,F)$. $\L(E):=\L(E,E)$ is a C*-algebra and $\K(E):=\K(E,E)$ is its closed ideal.

For the later argument, let $K(E)$ denote the algebraic additive subgroup of~$\L(E)$ generated by $\Theta(E):=\Theta(E,E)$.
Since $\Theta(E)$ is closed under complex-scalar multiplication, $K(E)$ is a linear subspace of~$\L(E)$.
Therefore, the closure of~$K(E)$ is~$\K(E)$.

\subsection{$X$-squared matrices}

Let $X$ be a set.
Let $(\e_x)_{x\in X}$ be the standard basis of the Hilbert space $\H_X:=\bigoplus_X\C$.
Let $M_X$ denote the C*-algebra of all bounded linear operators on~$\H_X$.
We call an element of~$M_X$ an \emph{$X$-squared matrix}.
For $x,y\in X$, \emph{the $(x,y)$-entry of $\alpha$} means the value~$\inn{\e_x}{\alpha\e_y}$.

\begin{definition}\label{2.1.2.1}
  Let $R$ be a binary relation on~$X$.

  \setlength{\hangindent}{2.5em}
  1) $R$ is \emph{the identity} if $xRy$ implies $x=y$ for any $x,y\in X$.
  
  \setlength{\hangindent}{2.5em}
  2) $R$ is \emph{one-to-one} if $xRy$ and $xRz$ imply $y=z$ and also $xRz$ and $yRz$ imply $x=y$ for any $x,y,z\in X$.
\end{definition}

\begin{definition}\label{2.1.2.2}
  For an $X$-squared matrix~$\alpha$, we define a binary relation~$\rel\alpha$ on~$X$ by the following: for any $x,y\in X$,
  \begin{equation*}
    x\rel\alpha y\defiff\text{the $(x,y)$-entry of~$\alpha$ does not vanish, i.e. $\inn{\e_x}{\alpha\e_y}\neq 0$}.
  \end{equation*}
\end{definition}

\begin{definition}\label{2.1.2.3}\ \par
  \hangindent=2.5em
  1) An $X$-squared matrix~$\alpha$ is \emph{diagonal} if the binary relation~$\rel\alpha$ is the identity.
  Let $D_X$ denote the set of all diagonal $X$-squared matrices.

  \hangindent=2.5em
  2) An $X$-squared matrix~$\alpha$ is \emph{quasi-monomial} if the binary relation~$\rel\alpha$ is one-to-one.
  Let $QM_X$ denote the set of all quasi-monomial $X$-squared matrices.
\end{definition}

\begin{remark}
From the definition above, it immediately follows that a quasi-monomial matrix is a matrix with at most one non-zero entry in each row and each column.
\end{remark}

\begin{proposition}\label{2.1.2.4}
  Let $\alpha$ and $\beta$ be $X$-squared matrices, $c$ be a complex number, and $x$ and $y$ be points in~$X$.

  \setlength{\hangindent}{2.5em}
  1) $x\rel{\alpha+\beta}y$ implies $x\rel\alpha y$ or $x\rel\beta y$.
  
  \setlength{\hangindent}{2.5em}
  2) $x\rel{\alpha c}y$ implies $x\rel\alpha y$.
  
  \setlength{\hangindent}{2.5em}
  3) $x\rel{\alpha\beta}z$ implies $x\rel\alpha\rel\beta z$.

  \setlength{\hangindent}{2.5em}
  4) $x\rel{\alpha^*}y$ implies $y\rel\alpha x$.
\end{proposition}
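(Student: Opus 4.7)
The plan is to reduce each claim to the defining equation $x \rel{\alpha} y \iff \inn{\e_x}{\alpha \e_y} \neq 0$ and a routine identity for $\inn{\e_x}{\alpha\e_y}$ under the operation in question, then take the contrapositive.

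For part~(1), I would expand $\inn{\e_x}{(\alpha+\beta)\e_y} = \inn{\e_x}{\alpha\e_y} + \inn{\e_x}{\beta\e_y}$ by linearity of $\alpha+\beta$ applied to $\e_y$ together with linearity of the inner product in the second argument; if both summands on the right vanish then the left does too. Part~(2) is the same with $\inn{\e_x}{(\alpha c)\e_y} = c\,\inn{\e_x}{\alpha\e_y}$, so a nonzero left side forces a nonzero right side. Part~(4) uses the defining property of the adjoint: $\inn{\e_x}{\alpha^*\e_y} = \inn{\alpha\e_x}{\e_y} = \overline{\inn{\e_y}{\alpha\e_x}}$, so $x \rel{\alpha^*} y$ is equivalent to $y \rel{\alpha} x$ (giving in particular the required one-directional implication).

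Part~(3) is the one that deserves care because it asserts the existence of an intermediate index $y$, and for infinite $X$ there is a convergence issue to handle. The idea is to use the fact that $(\e_y)_{y\in X}$ is an orthonormal basis of $\H_X$, so the vector $\beta \e_z \in \H_X$ has the expansion $\beta\e_z = \sum_{y\in X} \inn{\e_y}{\beta\e_z}\,\e_y$ convergent in norm; applying the bounded operator $\alpha$ and then pairing with $\e_x$ gives
\begin{equation*}
  \inn{\e_x}{\alpha\beta\e_z} = \sum_{y\in X} \inn{\e_x}{\alpha\e_y}\inn{\e_y}{\beta\e_z},
\end{equation*}
where the sum converges by continuity of $\alpha$ and the inner product (alternatively, by Cauchy--Schwarz applied to the $\ell^2$ sequences $y \mapsto \inn{\e_y}{\alpha^*\e_x}$ and $y \mapsto \inn{\e_y}{\beta\e_z}$). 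If $\inn{\e_x}{\alpha\beta\e_z} \neq 0$, then at least one term of the sum is nonzero, so there exists $y \in X$ with $\inn{\e_x}{\alpha\e_y} \neq 0$ and $\inn{\e_y}{\beta\e_z} \neq 0$; that is precisely $x\rel{\alpha} y$ and $y \rel{\beta} z$.

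The only non-routine point is the convergence step in~(3); everything else is direct bookkeeping with the defining equation of $\rel{\cdot}$. I would therefore expect the write-up to be short, with the bulk of the exposition devoted to justifying the matrix-product expansion in~(3).
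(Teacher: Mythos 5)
Your proposal is correct and follows essentially the same route as the paper: each part is reduced to the defining equation of $\rel{\cdot}$ via the corresponding identity for $\inn{\e_x}{\alpha\e_y}$, and part~(3) uses exactly the expansion $\inn{\e_x}{\alpha\beta\e_z}=\sum_{y\in X}\inn{\e_x}{\alpha\e_y}\inn{\e_y}{\beta\e_z}$ obtained from $\beta\e_z=\sum_{y}\e_y\inn{\e_y}{\beta\e_z}$. Your extra attention to the convergence of that sum is a welcome refinement but does not change the argument.
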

\begin{proof}\ \par
  \noindent
  1)
  Assume that $x\rel{\alpha+\beta}y$, i.e. $\inn{\e_x}{(\alpha+\beta)\e_y}\neq 0$.
  \begin{equation*}
    \inn{\e_x}{(\alpha+\beta)\e_y}
    =\inn{\e_x}{\alpha\e_y+\beta\e_y}
    =\inn{\e_x}{\alpha\e_y}+\inn{\e_x}{\beta\e_y}.
  \end{equation*}
  Therefore, by the assumption we have $\inn{\e_x}{\alpha\e_y}\neq 0$ or $\inn{\e_x}{\beta\e_y}\neq 0$, i.e. $x\rel\alpha y$ or $x\rel\beta y$.
  \medskip

  \noindent
  2)
  Assume that $x\rel{\alpha c}y$, i.e. $\inn{\e_x}{(\alpha c)\e_y}\neq 0$.
  \begin{equation*}
    \inn{\e_x}{(\alpha c)\e_y}
    =\inn{\e_x}{(\alpha\e_y)c}
    =\inn{\e_x}{\alpha\e_y}c.
  \end{equation*}
  Therefore, by the assumption we have $\inn{\e_x}{\alpha\e_y}c\neq 0$ and hence $\inn{\e_x}{\alpha\e_y}\neq 0$, i.e.~$x\rel\alpha y$.
  \medskip

  \noindent
  3)
  Assume that $x\rel{\alpha\beta}z$, i.e. $\inn{\e_x}{\alpha\beta\e_z}\neq 0$.
  \begin{equation*}
    \inn{\e_x}{\alpha\beta\e_z}
    =\inn{\e_x}{\alpha\sum_{y\in X}\e_y\inn{\e_y}{\beta\e_z}}
    =\sum_{y\in X}\inn{\e_x}{\alpha\e_y\inn{\e_y}{\beta\e_z}}
    =\sum_{y\in X}\inn{\e_x}{\alpha\e_y}\inn{\e_y}{\beta\e_z}.
  \end{equation*}
  Therefore, by the assumption there exists $y\in X$ such that $\inn{\e_x}{\alpha\e_y}\inn{\e_y}{\beta\e_z}\neq 0$, which implies both of $\inn{\e_x}{\alpha\e_y}\neq 0$ and $\inn{\e_y}{\beta\e_z}\neq 0$, i.e. both of $x\rel\alpha y$ and $y\rel\beta z$.
  So we have $x\rel\alpha\rel\beta z$.
  \medskip

  \noindent
  4)
  Assume that $x\rel{\alpha^*}y$, i.e. $\inn{\e_x}{\alpha^*\e_y}\neq 0$.
  Then, we have $\inn{\e_y}{\alpha\e_x}=\overline{\inn{\e_x}{\alpha^*\e_y}}\neq 0$, which implies $y\rel\alpha x$.
\end{proof}

\begin{fact}[\cite{B}, II.6.10.3 Corollary]\label{2.1.2.5}
  Let $A$ be a C*-algebra, $B$ a C*-algebra, and $\theta\colon A\to B$ an idempotent positive $B$-linear map.
  Then $\theta$ is a conditional expectation.
\end{fact}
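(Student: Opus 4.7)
The plan is to verify the conditional-expectation axioms one at a time. A conditional expectation from a C*-algebra $A$ onto a C*-subalgebra $C \subseteq A$ is, by definition, a contractive, positive, linear, idempotent $C$-bimodule map whose restriction to $C$ is the identity. Positivity, linearity (packaged inside ``$B$-linear''), idempotence, and the bimodule property are all in the hypotheses; what remains to extract are boundedness, the fixed-point property $\theta|_C = \id_C$ for the correct subalgebra $C$, and contractivity.

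For boundedness I would invoke the standard principle that any positive linear map between C*-algebras is automatically bounded. For the fixed-point property, idempotence combined with the fact that $\theta(A) \subseteq B$ immediately yields $\theta(b) = b$ on the range: if $b = \theta(a)$, then $\theta(b) = \theta(\theta(a)) = \theta(a) = b$. If $\theta$ is surjective onto $B$ then $C = B$ and this already closes the step; otherwise I would replace $B$ by the closed, self-adjoint, $B$-bimodule subspace $\theta(A)$ — which by those very properties is a C*-subalgebra of $B$ — and state the conclusion onto $\theta(A)$. For contractivity, in the unital case the identity $\theta(1)=1$ forces $\|\theta\| = \|\theta(1)\| = 1$ for positive maps. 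In the non-unital case — which is precisely the regime relevant to this paper, since $C_0(X)$ has no unit whenever $X$ is non-compact — the same conclusion is obtained by running the estimate along an approximate identity $(e_\lambda)$ of $C$ and using the bimodule relation $\theta(e_\lambda a e_\lambda) = e_\lambda \theta(a) e_\lambda$ to bound $\|\theta(a)\|$ by $\|a\|$.

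Once these four items are in hand, $\theta$ satisfies every clause of the definition and is therefore a conditional expectation onto $C$. The one place a proof could get delicate is the non-unital case of contractivity, where the approximate-identity argument must be carried out carefully; every other step is essentially bookkeeping. Since the statement is quoted verbatim from Blackadar's treatise, no new idea is required, and the sketch above is merely to confirm which hypothesis discharges which axiom.
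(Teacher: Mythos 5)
First, a remark on the comparison itself: the paper does not prove this statement --- it is imported verbatim from Blackadar as a citation (II.6.10.3), so there is no in-paper argument to measure yours against, and I can only assess the sketch on its own terms. Your bookkeeping steps (automatic boundedness of positive maps, $\theta=\mathrm{id}$ on the range via idempotence, the range being a closed self-adjoint subalgebra) are fine. The genuine gap is in contractivity, which is precisely the non-trivial content of the corollary. In the unital case, the assertion that positivity and $\theta(1)=1$ force $\Vert\theta\Vert=\Vert\theta(1)\Vert$ is true but is not an elementary ``standard principle'': for a merely positive map the elementary bound is only $\Vert\theta\Vert\le 2\Vert\theta(1)\Vert$, and the equality requires the Russo--Dye theorem (valid here because the domain is a full C*-algebra, not just an operator system); this should be cited, not waved at. More seriously, the non-unital argument does not close: from $\theta(e_\lambda a e_\lambda)=e_\lambda\theta(a)e_\lambda$ and $\Vert e_\lambda a e_\lambda\Vert\le\Vert a\Vert$ you can only conclude $\Vert e_\lambda\theta(a)e_\lambda\Vert\le\Vert\theta\Vert\,\Vert a\Vert$, and letting $\lambda\to\infty$ returns the tautology $\Vert\theta(a)\Vert\le\Vert\theta\Vert\,\Vert a\Vert$; bounding $\Vert\theta(e_\lambda a e_\lambda)\Vert$ by $\Vert a\Vert$ is exactly the contractivity you are trying to prove. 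Note the non-unital case is the one the paper actually needs, since Lemma~\ref{4.9} applies the fact with the subalgebra $\rho_A(A)\cong C_0(X)$, which is non-unital for non-compact~$X$.

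The missing idea is the order-theoretic trick that drives the corollary. For $0\le a\le 1$ set $b:=\theta(a)\ge 0$; then $bab\le b\cdot 1\cdot b=b^2$, hence
\begin{equation*}
  b^3=b\,\theta(a)\,b=\theta(bab)\le\theta(b^2)=b^2,
\end{equation*}
using the bimodule property, positivity, and $\theta=\mathrm{id}$ on~$B$; and $b^3\le b^2$ for a positive $b$ forces $\sigma(b)\subseteq[0,1]$, i.e. $\Vert\theta(a)\Vert\le 1$. This gives contractivity on positive (hence self-adjoint) elements with no unit needed. To pass to arbitrary elements without losing a factor of~$2$, either observe that a positive $B$-bimodule map is automatically completely positive (since $\sum_{i,j}b_i^*\theta(a_i^*a_j)b_j=\theta\bigl((\sum_i a_ib_i)^*(\sum_j a_jb_j)\bigr)\ge 0$), so that $\Vert\theta\Vert=\lim_\lambda\Vert\theta(u_\lambda)\Vert\le 1$ along an approximate unit of~$A$, or verify that the unital extension to the unitization is positive and apply Russo--Dye there. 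Without some such argument your proposal does not establish $\Vert\theta\Vert\le 1$, and hence does not establish that $\theta$ is a conditional expectation.
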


\begin{proposition}\label{2.1.2.6}
  Let $\delta\colon M_X\to D_X$ be the complex-linear operator defined by
  \begin{equation*}
    \delta(\alpha)\e_x:=\e_x\inn{\e_x}{\alpha\e_x}\quad\text{for each $\alpha\in M_X$ and $x\in X$.}
  \end{equation*}
  Then the following five statements hold.
  \medskip
  
  \hangindent=2.5em
  1) $\delta(\alpha)=\alpha$ holds for any $\alpha\in D_X$.
  
  \hangindent=2.5em
  2) $\delta$ is idempotent, i.e. $\delta\circ\delta=\delta$.
  
  \hangindent=2.5em
  3) $\delta$ is $D_X$-linear.
  
  \hangindent=2.5em
  4) $\delta$ is positive as an operator.
  That is, for any positive element~$\alpha$ in~$M_X$, $\delta(\alpha)$ is a positive element of~$D_X$. Therefore, $\delta$ is also bounded.
  
  \hangindent=2.5em
  5) $\delta$ is faithful as a positive operator.
  That is, an positive element~$\alpha$ of~$M_X$ satisfying that $\delta(\alpha)=0$ is only the zero element.
  \medskip

  \noindent
  Therefore, $\delta$ is a conditional expectation.
\end{proposition}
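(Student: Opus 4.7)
My plan is to verify (1)--(5) directly from the defining formula $\delta(\alpha)\e_x=\e_x\inn{\e_x}{\alpha\e_x}$, and then deduce the final sentence from Fact~\ref{2.1.2.5} applied with $A=M_X$ and $B=D_X$. Before starting the numbered items I would record that $\delta$ really does land in $D_X$ and is bounded: the formula makes every off-diagonal entry of $\delta(\alpha)$ vanish, and $\abs{\inn{\e_x}{\alpha\e_x}}\le\norm\alpha$ uniformly in $x$, so $\delta(\alpha)\in D_X$ with $\norm{\delta(\alpha)}\le\norm\alpha$.

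For (1), for $\alpha\in D_X$ the expansion $\alpha\e_x=\sum_{y\in X}\e_y\inn{\e_y}{\alpha\e_x}$ collapses to the single term $\e_x\inn{\e_x}{\alpha\e_x}$ because the off-diagonal entries vanish, which matches $\delta(\alpha)\e_x$. Part~(2) is immediate from~(1) together with the observation that $\delta(\alpha)\in D_X$. For~(3) I would verify the $D_X$-linearity by a direct computation of diagonal entries, namely
\begin{equation*}
  \inn{\e_x}{(d\alpha)\e_x}=\inn{d^*\e_x}{\alpha\e_x}=\inn{\e_x}{d\e_x}\inn{\e_x}{\alpha\e_x},
\end{equation*}
using that $d^*\e_x=\e_x\overline{\inn{\e_x}{d\e_x}}$ for $d\in D_X$, together with the analogous identity for $\alpha d$; these agree with the $(x,x)$-entries of $d\,\delta(\alpha)$ and $\delta(\alpha)\,d$, and since both sides are diagonal this is enough.

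For (4), writing a positive $\alpha\in M_X$ as $\beta^*\beta$ gives $\inn{\e_x}{\alpha\e_x}=\norm{\beta\e_x}^2\ge 0$, so $\delta(\alpha)$ is a diagonal matrix with non-negative entries, hence a positive element of~$D_X$; its boundedness was already recorded above. For~(5), the same identity $\inn{\e_x}{\alpha\e_x}=\norm{\beta\e_x}^2$ shows that $\delta(\alpha)=0$ forces $\beta\e_x=0$ for every $x\in X$, and since $(\e_x)_{x\in X}$ is an orthonormal basis and $\beta$ is a bounded linear operator on $\H_X$, this forces $\beta=0$ and hence $\alpha=0$.

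With (2), (3), and (4) in hand, the final assertion that $\delta$ is a conditional expectation follows at once from Fact~\ref{2.1.2.5}. I do not foresee any serious obstacle: the proof is a sequence of computations with the basis $(\e_x)_{x\in X}$ and the inner product of $\H_X$. The only point requiring a moment's care is the $D_X$-linearity of~(3), where one has to be careful about how $d$ and $d^*$ act on basis vectors and to handle left and right multiplication separately.
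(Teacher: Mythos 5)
Your proposal is correct, and items (1)--(4) together with the final appeal to Fact~\ref{2.1.2.5} follow essentially the same route as the paper (the paper verifies the $D_X$-linearity by expanding $\alpha\beta\e_x$ and $\beta\alpha\e_x$ in the basis rather than by passing to $d^*\e_x$, but that is only a cosmetic difference). The one genuinely different step is faithfulness, item~(5). The paper never factors $\alpha$: it first notes that $\delta(\alpha)=0$ kills all diagonal entries, and then, for two distinct points $x,y$ and $c:=\inn{\e_x}{\alpha\e_y}$, evaluates the quadratic form at the test vector $\v:=c\e_x-\abs{c}\e_y$ to obtain $\inn{\v}{\alpha\v}=-2\abs{c}^3\geq 0$, forcing $c=0$; hence every entry of $\alpha$ vanishes and $\alpha=0$. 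Your argument instead writes $\alpha=\beta^*\beta$ and observes that $\inn{\e_x}{\alpha\e_x}=\norm{\beta\e_x}^2=0$ for all $x$ forces $\beta$ to vanish on the dense span of the basis, hence $\beta=0$ and $\alpha=0$. Both are valid; yours is shorter and is the standard argument that a positive operator with vanishing diagonal is zero, at the cost of invoking the existence of a factorization $\alpha=\beta^*\beta$ (e.g.\ the square root supplied by the functional calculus), whereas the paper's computation uses only the defining inequality $\inn{\v}{\alpha\v}\geq 0$ and stays entirely at the level of matrix entries.
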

\begin{proof}\ \par
  \noindent
  1) For any $x\in X$, we have $\delta(\alpha)\e_x=\e_x\inn{\e_x}{\alpha\e_x}=\sum_{y\in X}\e_y\inn{\e_y}{\alpha\e_x}=\alpha\e_x$ and hence $\delta(\alpha)=\alpha$.
  
  \noindent
  2)
  For any $\alpha\in M_X$, letting $\beta:=\delta(\alpha)$, we have $(\delta\circ\delta)(\alpha)=\delta(\beta)=\beta=\delta(\alpha)$.
  Therefore the operator $\delta$ is idempotent.

  \noindent
  3) 
  Let $\alpha,\beta\in M_X$.
  The following holds for any $x\in X$; hence $\delta(\alpha+\beta)=\delta(\alpha)+\delta(\beta)$.
  \begin{gather*}
    \delta(\alpha+\beta)\e_x
    =\e_x\inn{\e_x}{(\alpha+\beta)\e_x}
    =\e_x\inn{\e_x}{\alpha\e_x}+\e_x\inn{\e_x}{\beta\e_x}\\
    =\delta(\alpha)\e_x+\delta(\beta)\e_x
    =(\delta(\alpha)+\delta(\beta))\e_x.
  \end{gather*}
  
  Let $\alpha\in M_X$ and $\beta\in D_X$.
  Applying the result of 1), we have $\delta(\beta)=\beta$.
  The following hold for any $x\in X$:
  \begin{gather*}
    \delta(\alpha\beta)\e_x
    =\e_x\inn{\e_x}{\alpha\beta\e_x}
    =\e_x\inn{\e_x}{\alpha\sum_{y\in X}\e_y\inn{\e_y}{\beta\e_x}}
    =\sum_{y\in X}\e_x\inn{\e_x}{\alpha\e_y\inn{\e_y}{\beta\e_x}}
    \\=\sum_{y\in X}\e_x\inn{\e_x}{\alpha\e_y}\inn{\e_y}{\beta\e_x}
    =\e_x\inn{\e_x}{\alpha\e_x}\inn{\e_x}{\beta\e_x}
    =\delta(\alpha)\e_x\inn{\e_x}{\beta\e_x}
    =\delta(\alpha)\delta(\beta)\e_x
  \end{gather*}
  and
  \begin{gather*}
    \delta(\beta\alpha)\e_x
    =\e_x\inn{\e_x}{\beta\alpha\e_x}
    =\e_x\inn{\e_x}{\beta\sum_{y\in X}\e_y\inn{\e_y}{\alpha\e_x}}
    =\sum_{y\in X}\e_x\inn{\e_x}{\beta\e_y\inn{\e_y}{\alpha\e_x}}
    \\=\sum_{y\in X}\e_x\inn{\e_x}{\beta\e_y}\inn{\e_y}{\alpha\e_x}
    =\e_x\inn{\e_x}{\beta\e_x}\inn{\e_x}{\alpha\e_x}
    =\delta(\beta)\e_x\inn{\e_x}{\alpha\e_x}
    =\delta(\beta)\delta(\alpha)\e_x.
  \end{gather*}
  Therefore $\delta(\alpha\beta)=\delta(\alpha)\delta(\beta)=\delta(\alpha)\beta$ and $\delta(\beta\alpha)=\delta(\beta)\delta(\alpha)=\beta\delta(\alpha)$ hold.

  \noindent
  4) 
  Let $\alpha$ be a positive element of~$M_X$, i.e. a positive operator on~$\H_X$.
  Then the inequality $\inn{\e_x}{\alpha\e_x}\geq 0$ holds for any $x\in X$.
  So $\delta(\alpha)$ is a diagonal matrix whose all of diagonal entries are non-negative real numbers and hence is a positive element of~$D_X$.
  Since this holds for any positive element $\alpha$ of~$M_X$, $\delta$ is a positive operator.

  \noindent
  5) 
  Let $\alpha$ be a positive element of~$M_X$ with $\delta(\alpha)=0$.
  The following holds for any $x\in X$ and hence all of diagonal entries of $\alpha$ are zero.
  \begin{equation*}
    \inn{\e_x}{\alpha\e_x}
    =\inn{\e_x}{\e_x}\inn{\e_x}{\alpha\e_x}
    =\inn{\e_x}{\e_x\inn{\e_x}{\alpha\e_x}}
    =\inn{\e_x}{\delta(\alpha)\e_x}
    =\inn{\e_x}{0\e_x}
    =0.
  \end{equation*}
  Let $x$ and $y$ be two distinct points in $X$ and let $c:=\inn{\e_x}{\alpha\e_y}$.
  Since $\alpha$ is self-adjoint, we have ${\bar c}=\inn{\e_y}{\alpha\e_x}$.
  Let $\v:=c\e_x-|c|\e_y$.
  \begin{align*}
    \inn{\v}{\alpha\v}
    &={\bar c}\inn{\e_x}{\alpha\e_x}c
      +{\bar c}\inn{\e_x}{\alpha\e_y}(-|c|)
      +\overline{-|c|}\inn{\e_y}{\alpha\e_x}c
      +\overline{-|c|}\inn{\e_y}{\alpha\e_y}(-|c|)\\
    &={\bar c}0c
      +{\bar c}c(-|c|)
      +\overline{-|c|}{\bar c}c
      +\overline{-|c|}0(-|c|)\\
    &=-2|c|^3.
  \end{align*}
  Since $\alpha$ is a positive element of~$M_X$, i.e. a positive operator on~$\H_X$, we obtain that $\inn{\v}{\alpha\v}\geq 0$.
  Hence we have $\inn{\e_x}{\alpha\e_y}=c=0$.
  Since this equation holds for any two distinct points $x$ and $y$ in~$X$, the matrix $\alpha$ is diagonal.

  By the above results, $\alpha$ is a diagonal matrix whose diagonal entries are zero; hence it is the zero matrix.
\end{proof}

\subsection{Cuntz--Pimsner algebras}

In this sub-subsection, we review Cuntz--Pimsner algebras, which were introduced in \cite{P}.
See \cite{P} or \cite{K} for more detail.

A \emph{C*-correspondence} over a C*-algebra~$A$ is a right Hilbert $A$-module~$E$ endowed with the left $A$-scalar multiplication induced by a $*$-homomorphism from~$A$ to~$\L(E)$.

Let $A$ and $B$ be C*-algebras, $E$ be a C*-correspondence over $A$, and $\rho_A\colon A\to B$ and $\rho_E\colon E\to B$ be maps.
The pair $\rho=(\rho_A,\rho_E)$ is called a \emph{representation of $E$} if $\rho_A$ is a $*$-homomorphism, $\rho_E$ is a complex-linear map, and moreover the following two conditions hold.
\medskip

\hangindent=2.5em
(1) $\rho$ preserves inner product.
That is, $\rho_A(\inn{f}{g})=\rho_E(f)^*\rho_E(g)$ holds for any $f,g\in E$.

\hangindent=2.5em
(2) $\rho$ preserves left $A$-scalar multiplication.
That is, $\rho_E(af)=\rho_A(a)\rho_E(f)$ holds for any $a\in A$ and $f\in E$.

\begin{fact}[\cite{K}, after Definition 2.1]
If $\rho$ is a representation of~$E$, then the next holds automatically:
\smallskip

\hangindent=2.5em
(3) $\rho$ preserves right $A$-scalar multiplication.
That is, $\rho_E(fa)=\rho_E(f)\rho_A(a)$ holds for any $a\in A$ and $f\in E$.
\end{fact}
\begin{proof}
Let $b=\rho_E(fa)-\rho_E(f)\rho_A(a)$.
Since $\rho$ preserves the inner product and $\rho_A$ is a *-homomorphism, we have
\begin{align*}
b^*b
&=\bigl(\rho_E(fa)-\rho_E(f)\rho_A(a)\bigr)^*\bigl(\rho_E(fa)-\rho_E(f)\rho_A(a)\bigr)\\
&=\rho_E(fa)^*\rho_E(fa)-\rho_E(fa)^*\rho_E(f)\rho_A(a)\\
&\qquad-\rho_A(a)^*\rho_E(f)^*\rho_E(fa)+\rho_A(a)^*\rho_E(f)^*\rho_E(f)\rho_A(a)\\
&=\rho_A(\inn{fa}{fa})-\rho_A(\inn{fa}{f})\rho_A(a)\\
&\qquad-\rho_A(a^*)\rho_A(\inn{f}{fa})+\rho_A(a^*)\rho_A(\inn{f}{f})\rho_A(a)\\
&=\rho_A(\inn{fa}{fa}-\inn{fa}{f}a-a^*\inn{f}{fa}+a^*\inn{f}{f}a)\\
&=0.
\end{align*}
So we have $\norm{b}^2=\norm{b^*b}=0$ and hence $b=0$.
\end{proof}

For a representation $\rho=(\rho_A,\rho_E)$, let $C^*(\rho)$ denote the C*-subalgebra of~$B$ generated by the set $\rho_A(A)\cup\rho_E(E)$.

A representation $\rho$ is called \emph{injective} if $\rho_A$ is injective.
In this case, $\rho_E$ is also injective automatically.

Let $A$ be a C*-algebra, $E$ be a C*-correspondence over $A$, and $\rho=(\rho_A,\rho_E)$ be a representation of~$E$.
We define two $*$-homomorphisms $\varphi$ and $\psi_\rho$ by the following.
\begin{alignat*}{3}
  &\varphi\colon A\to\L(E),
  &\quad&\varphi(a)(f):=af
  &\quad&\text{for each $a\in A$, $f\in E$}.
  \\
  &\psi_\rho\colon\K(E)\to C^*(\rho),
  &\quad&\psi_\rho(\theta_{f,g}):=\rho_E(f)\rho_E(g)^*
  &\quad&\text{for each $f,g\in E$}.
\end{alignat*}
Suppose that $\varphi$ is injective.
Let $\I_E:=\set{a\in A}{\varphi(a)\in\K(E)}$.
This is a closed ideal of~$A$ and is called a \emph{covariant ideal} of $E$.
We call $\rho$ \emph{covariant} if $\rho_A(a)=\psi_\rho(\varphi(a))$ holds for any $a\in\I_E$.
There exist covariant representations whenever $\varphi$ is injective, and also exists the universal one of them.
\emph{The Cuntz--Pimsner algebra associated with~$E$} is the C*-algebra induced by the universal covariant representation of~$E$ and denoted by~$\O_E$.

\begin{remark}
The condition for injectivity of $\varphi$ is not necessary in the definition modified by Katsura in \cite{K}.
\end{remark}

\begin{fact}[\cite{K}, Lemma 2.4]\label{2.1.3.1}
  The equation $\psi_\rho(\theta)\rho_E(f)=\rho_E(\theta f)$ holds for any $\theta\in\K(E)$ and $f\in E$.
\end{fact}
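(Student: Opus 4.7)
The plan is a standard density argument: verify the identity first on rank-one operators $\theta_{g,h}$, extend by linearity to the algebraic span $K(E)$, then by norm-continuity to all of $\K(E)$.

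For the base case $\theta = \theta_{g,h}$, I would unfold $\theta_{g,h}f = g\inn{h}{f}_E$ and apply axioms~(3) and~(1) of a representation in succession:
\[
\rho_E(\theta_{g,h}f) = \rho_E(g\inn{h}{f}_E) = \rho_E(g)\rho_A(\inn{h}{f}_E) = \rho_E(g)\rho_E(h)^*\rho_E(f).
\]
By the definition of $\psi_\rho$, the right-hand side equals $\psi_\rho(\theta_{g,h})\rho_E(f)$, which settles the identity on generators.

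Both sides are complex-linear in $\theta$: on the left because $\psi_\rho$ is $\C$-linear, and on the right because $\rho_E$ is $\C$-linear and $\theta\mapsto \theta f$ is $\C$-linear. Hence the identity propagates from the generators to all of $K(E)$. For the closure step I would verify that both sides depend norm-continuously on $\theta \in \K(E)$. The left-hand side is continuous because $\psi_\rho$ is a $*$-homomorphism between C*-algebras (hence contractive) and multiplication is jointly continuous. For the right-hand side, the standard bound $\norm{\rho_E(e)}^2 = \norm{\rho_A(\inn{e}{e}_E)} \leq \norm{e}^2$ shows $\rho_E$ is contractive, and $\norm{\theta f} \leq \norm{\theta}\,\norm{f}$ for any $\theta \in \L(E)$, so $\theta \mapsto \rho_E(\theta f)$ is continuous as well. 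Since $K(E)$ is dense in $\K(E)$ (as recorded at the end of the review subsection on adjointable operators), the identity extends from $K(E)$ to all of $\K(E)$.

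I do not foresee a real obstacle. The only mildly delicate point is the well-definedness and boundedness of $\psi_\rho$ on the full ideal $\K(E)$ rather than on $K(E)$ alone, but the excerpt takes $\psi_\rho\colon\K(E)\to C^*(\rho)$ as a given $*$-homomorphism, so this is background and does not need to be re-derived here.
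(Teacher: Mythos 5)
Your proposal is correct and follows exactly the paper's own route: verify the identity on the rank-one generators $\theta_{g,h}$ using the representation axioms, extend by linearity to the algebraic span $K(E)$, and conclude by the boundedness of both sides on $\K(E)$. The only difference is that you spell out the continuity estimates that the paper leaves implicit.
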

\begin{proof}
  The equation holds for any $\theta\in\Theta(E)$ since we have the following for any $f,g,h\in E$:
  \begin{equation*}
    \psi_\rho(\theta_{f,g})\rho_E(h)
    =\rho_E(f)\rho_E(g)^*\rho_E(h)
    =\rho_E(f\inn{g}{h})
    =\rho_E(\theta_{f,g}h).
  \end{equation*}
  By the linearity, it also holds for any $\theta\in K(E)$.
  Since two linear operators $\theta\mapsto\psi_\rho(\theta)\rho_E(f)$ and $\theta\mapsto\rho_E(\theta f)$ are bounded, the equation holds for any $\theta\in\K(E)$.
\end{proof}

\begin{proposition}\label{2.1.3.2}
  The equation $\psi_\rho(\varphi(a))\rho_E(f)=\rho_A(a)\rho_E(f)$ holds for any $a\in\I_E$ and $f\in E$.
\end{proposition}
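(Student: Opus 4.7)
The proof plan is short: the statement falls out by combining Fact~\ref{2.1.3.1} with the definition of $\varphi$ and the left-module compatibility of the representation.

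First, since $a\in\I_E$, we have by definition $\varphi(a)\in\K(E)$, so Fact~\ref{2.1.3.1} is applicable with $\theta:=\varphi(a)$. This gives
\begin{equation*}
  \psi_\rho(\varphi(a))\rho_E(f)=\rho_E(\varphi(a)f).
\end{equation*}
Next, unfolding the definition of $\varphi$, namely $\varphi(a)f=af$, rewrites the right-hand side as $\rho_E(af)$. Finally, condition~(2) in the definition of a representation (preservation of left $A$-scalar multiplication) yields $\rho_E(af)=\rho_A(a)\rho_E(f)$, and chaining the three equalities gives the result.

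There is really no obstacle here: once Fact~\ref{2.1.3.1} is in hand, the proposition is just a three-line calculation, and the whole point of the statement is to record in a convenient form the fact that the covariance relation $\rho_A(a)=\psi_\rho(\varphi(a))$ automatically holds when tested against elements of $\rho_E(E)$, for any $a\in\I_E$, regardless of whether the representation is covariant.
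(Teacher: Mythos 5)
Your proof is correct and is exactly the paper's argument: apply Fact~\ref{2.1.3.1} with $\theta=\varphi(a)$, unfold $\varphi(a)f=af$, and use the left-module condition of a representation. The closing remark that covariance is not needed for this identity is a fair and accurate observation.
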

\begin{proof}
  Using Fact \ref{2.1.3.1}, we have
  \begin{equation*}
    \psi_\rho(\varphi(a))\rho_E(f)
    =\rho_E(\varphi(a)f)
    =\rho_E(af)
    =\rho_A(a)\rho_E(f).
    \qedhere
  \end{equation*}
\end{proof}

\begin{definition}[\cite{K}, Definition 5.6]\label{2.1.3.3}
  A representation $\rho=(\rho_A,\rho_E)$ of a C*-correspondence~$E$ over a C*-algebra~$A$ is said to \emph{admit a gauge action} if for each complex number~$c$ with $\abs{c}=1$, there exists a $*$-homomorphism $\Phi_c\colon C^*(\rho)\to C^*(\rho)$ such that $\Phi_c(\rho_A(a))=\rho_A(a)$ and $\Phi_c(\rho_E(f))=c\rho_E(f)$ for all $a\in A$ and $f\in E$.
\end{definition}

\begin{fact}[\cite{K}, Theorem 6.4]\label{2.1.3.4}
  For a covariant representation $\rho$ of a C*-correspondence $E$, the $*$-homomorphism $\O_E\to C^*(\rho)$ is an isomorphism if and only if $\rho$ is injective and admits a gauge action.
\end{fact}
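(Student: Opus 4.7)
The plan is to prove both directions of this gauge-invariant uniqueness theorem, with the nontrivial content concentrated in the ``if'' direction.

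For the ``only if'' direction, I would first exhibit a canonical gauge action on $\O_E$ itself. Writing $(i_A,i_E)$ for the universal covariant representation, observe that for each $c\in\T$ the pair $(i_A, c\cdot i_E)$ is again a covariant representation of $E$ into $\O_E$, so by universality there is a $*$-homomorphism $\gamma_c\colon\O_E\to\O_E$ acting as the identity on $i_A(A)$ and as multiplication by $c$ on $i_E(E)$. If $\pi\colon\O_E\to C^*(\rho)$ is an isomorphism, then $\Phi_c:=\pi\circ\gamma_c\circ\pi^{-1}$ gives the required gauge action on $C^*(\rho)$, and injectivity of $\rho_A=\pi\circ i_A$ follows once one knows $i_A$ is injective, which is standard under the running assumption that $\varphi$ is injective.

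For the ``if'' direction, the main tool is averaging. Given the gauge action $\{\Phi_c\}_{c\in\T}$ on $C^*(\rho)$, I would define $E_\rho(x):=\int_\T\Phi_c(x)\,dc$ with respect to normalized Haar measure on $\T$; this yields a faithful conditional expectation onto the fixed-point subalgebra $C^*(\rho)^\Phi$. The analogous construction on $\O_E$ using $\gamma_c$ gives a faithful conditional expectation $E_{\O_E}\colon\O_E\to\O_E^\gamma$ onto the ``core''. By construction $\pi$ intertwines the two expectations, so by a standard diagram chase the injectivity of $\pi$ on $\O_E$ reduces to injectivity on the core $\O_E^\gamma$.

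The hard step is therefore to analyze the core and show $\pi$ is injective there. The core is the closed union of an increasing sequence of C*-subalgebras $B_n\subseteq\O_E^\gamma$, where $B_n$ is generated by ``balanced'' monomials $i_E(f_1)\cdots i_E(f_n)i_E(g_n)^*\cdots i_E(g_1)^*$; using Fact~\ref{2.1.3.1} and the covariance condition, each $B_n$ can be identified with an algebra of compacts on an $n$-fold internal tensor power of $E$ over~$A$. I would then prove injectivity of $\pi\vert_{B_n}$ by induction on~$n$: the base case $B_0=i_A(A)$ uses the assumed injectivity of $\rho_A$, and the inductive step uses the short exact sequences relating $B_{n-1}$, $B_n$, and a quotient isomorphic to $\K$ of the $n$-fold tensor power, together with the fact that $\pi$ is already known injective on $B_{n-1}$ and automatically injective on each compact quotient because it is isometric on simple tensors of the form $i_E(f_1)\cdots i_E(f_n)$. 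Passing to the closed union completes the argument, and this inductive bookkeeping on the tower $B_0\subseteq B_1\subseteq\cdots$ is the step I expect to be the main obstacle, since it requires a careful identification of the ideal structure of each $B_n$.
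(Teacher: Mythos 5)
The paper does not prove this statement: it is imported verbatim as a Fact from Katsura \cite{K}, Theorem~6.4 (the gauge-invariant uniqueness theorem), so there is no in-paper argument to compare against. Your outline is essentially the standard proof from that reference: the canonical gauge action on $\O_E$ obtained from universality for the ``only if'' direction, and, for the ``if'' direction, averaging over $\T$ to get faithful conditional expectations onto the cores intertwined by $\pi$, followed by an inductive analysis of the tower $B_0\subseteq B_1\subseteq\cdots$ exhausting the core. Two small cautions if you were to flesh this out: the subalgebra $B_n$ must be taken as the \emph{sum} of the spans of balanced monomials of all lengths $k\leq n$ (length-exactly-$n$ monomials alone do not give an increasing tower), and injectivity on $\psi_\rho^{(n)}(\K(E^{\otimes n}))$ is not ``automatic from isometry on simple tensors'' but follows from injectivity of $\rho_A$ via $\norm{\rho_{E^{\otimes n}}(\xi)}^2=\norm{\rho_A(\inn{\xi}{\xi})}$; the genuinely delicate point, which you correctly flag, is controlling the overlap $B_{n-1}\cap\psi_\rho^{(n)}(\K(E^{\otimes n}))$, which is exactly where the covariance condition on $\I_E$ is used.
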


\begin{proposition}\label{2.1.3.5}
  The equation $\lim_\lambda(f\cdot a_\lambda)=f$ holds for any approximate unit $\{a_\lambda\}$ of~$A$ and $f\in E$.
\end{proposition}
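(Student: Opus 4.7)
The plan is to work at the level of the $A$-valued inner product on~$E$, since by definition $\norm{g}_E^2=\norm{\inn{g}{g}_E}_A$ for any $g\in E$. Applied to $g=f-f\cdot a_\lambda$, this reduces the claim to showing that $\inn{f-f\cdot a_\lambda}{f-f\cdot a_\lambda}_E\to 0$ in the norm of~$A$.

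Expanding by sesquilinearity and using the module identities $\inn{h}{h'\cdot a}_E=\inn{h}{h'}_E\cdot a$ and $\inn{h\cdot a}{h'}_E=a^*\cdot\inn{h}{h'}_E$, I would write
\begin{equation*}
  \inn{f-f\cdot a_\lambda}{f-f\cdot a_\lambda}_E
  =\inn{f}{f}_E-\inn{f}{f}_E\cdot a_\lambda-a_\lambda^*\cdot\inn{f}{f}_E+a_\lambda^*\cdot\inn{f}{f}_E\cdot a_\lambda.
\end{equation*}
Taking $\{a_\lambda\}$ to be a standard (positive, self-adjoint) approximate unit, each of the terms $\inn{f}{f}_E\cdot a_\lambda$, $a_\lambda\cdot\inn{f}{f}_E$, and $a_\lambda\cdot\inn{f}{f}_E\cdot a_\lambda$ converges in $A$-norm to $\inn{f}{f}_E$ by the defining property of an approximate unit applied to the element $\inn{f}{f}_E\in A$ (for the doubly-multiplied term, one factors it as $(a_\lambda\cdot\inn{f}{f}_E)\cdot a_\lambda$ and uses convergence together with the uniform bound $\norm{a_\lambda}\leq 1$). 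Hence the whole expression tends in norm to $\inn{f}{f}_E-\inn{f}{f}_E-\inn{f}{f}_E+\inn{f}{f}_E=0$.

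This is a routine argument from the theory of Hilbert C*-modules, and there is no genuine obstacle: the only care needed is to use the correct sesquilinearity conventions of the paper (conjugate-linear in the first slot, as in the computation in Proposition~\ref{2.1.2.6}) and to observe that the approximate unit can be taken self-adjoint so that $a_\lambda^*=a_\lambda$ in the formula above.
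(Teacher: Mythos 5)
Your argument is correct, and it reaches the same reduction as the paper --- namely, estimating $\norm{fa_\lambda-f}_E^2=\NORM{}{\inn{f-fa_\lambda}{f-fa_\lambda}}_A$ and then invoking the approximate-unit property inside~$A$ --- but by a different decomposition. The paper avoids the four-term expansion entirely: it sets $\abs{f}:=\sqrt{\inn{f}{f}}$ and uses the exact factorization $(a_\lambda-1)^*\inn{f}{f}(a_\lambda-1)=\bigl(\abs{f}(a_\lambda-1)\bigr)^*\bigl(\abs{f}(a_\lambda-1)\bigr)$, so that $\norm{fa_\lambda-f}_E=\NORM{}{\abs{f}a_\lambda-\abs{f}}_A$ as an identity, and a single application of the approximate-unit property to the element $\abs{f}\in A$ finishes the proof. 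That route needs the continuous functional calculus (to form the square root) but requires no hypothesis whatsoever on $\{a_\lambda\}$ beyond being a right approximate unit. Your route is more elementary but needs the extra observations you supply: the uniform bound $\norm{a_\lambda}\leq 1$ for the term $a_\lambda^*\inn{f}{f}a_\lambda$, and the handling of $a_\lambda^*$. On the latter point, your restriction to a self-adjoint approximate unit is in mild tension with the statement's ``for any approximate unit''; under the usual convention (positive, contractive) this is automatic, and in any case self-adjointness is not actually needed, since $a_\lambda^*b=(b^*a_\lambda)^*\to b$ in norm for any two-sided approximate unit. With that remark your proof covers the full statement.
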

\begin{proof}
  Since $\inn{f}{f}$ is a positive element of~$A$, we can let $\abs{f}:=\sqrt{\inn{f}{f}}$.
  \begin{gather*}
    \norm{fa_\lambda-f}_E^2
    =\norm{f(a_\lambda-1)}_E^2
    =\inn{f(a_\lambda-1)}{f(a_\lambda-1)}\\
    =(a_\lambda-1)^*\inn{f}{f}(a_\lambda-1)
    =(a_\lambda-1)^*\abs{f}^2(a_\lambda-1)\\
    =\NORM\Big{\abs{f}(a_\lambda-1)}_A^2
    =\NORM\Big{\abs{f}a_\lambda-\abs{f}}_A^2
  \end{gather*}
  Therefore, we have $\lim_\lambda\norm{fa_\lambda-f}_E=0$, which implies that $\lim_\lambda(f\cdot a_\lambda)=f$.
\end{proof}

\subsection{$Z$-squared $B$-valued matrices}

Let $Z$ be a set, $B$ be a C*-algebra and $B_1$ be its unitization.
Let $B_1^{\oplus Z}$ denote the algebraic direct sum of $\abs{Z}$-many copies of $B_1$ as a right $B_1$-module and $\H_Z(B)$ be the completion of the right pre-Hilbert $B_1$-module defined as $B_1^{\oplus Z}$ endowed with the $B_1$-valued inner product defined by $\inn{\boldsymbol a}{\boldsymbol b}=\sum_{n\in Z}a_n^*b_n$ for any ${\boldsymbol a}=(a_n)_{n\in Z}$ and ${\boldsymbol b}=(b_n)_{n\in Z}$.
Then, $\H_Z(B)$ becomes a right Hilbert $B_1$-module (see \cite{B}, II.7.1.5 Proposition and II.7.1.7 Examples (iv)).
We note that $\H_Z(\C)=\H_Z$.
For the argument in this sub-subsection, let $\e_m:=(\delta_{m,n})_{n\in Z}$ for each $m\in Z$.

Let $M_Z(B)$ denote the C*-algebra~$\L(\H_Z(B))$ of all adjointable operators from $\H_Z(B)$ to itself.
We call an element of~$M_Z(B)$ a \emph{$Z$-squared $B$-valued matrix}.
We note that $M_Z(\C)=M_Z$.

For $m,n\in Z$, \emph{the $(m,n)$-entry} of a $Z$-squared $B$-valued matrix~$\alpha$ is the value $\inn{\e_m}{\alpha\e_n}$.
For the argument in sub-subsection part, let $\epsilon_{m,n}$ be the matrix whose $(m,n)$-entry is~$1$ and the others are~$0$.

\begin{remark}
  An entry of a $Z$-squared $B$-valued matrix may not be in~$B$ but in~$B_1$.
\end{remark}

\begin{proposition}\label{2.1.4.1}
  A $Z$-squared $B$-valued matrix with no non-zero entries is zero.
\end{proposition}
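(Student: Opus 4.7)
The plan is to reduce the assertion to the identity $\inn{z}{z} = 0 \Rightarrow z = 0$, which holds in any Hilbert $C^*$-module. Assuming every entry $\inn{\e_m}{\alpha \e_n}$ vanishes, I would show $\alpha y = 0$ for every $y \in \H_Z(B)$ by establishing $\inn{\alpha y}{\alpha y} = 0$; this in turn will follow once I have $\inn{x}{\alpha y} = 0$ for all $x, y$ in a dense subspace of $\H_Z(B)$.

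First I would verify the vanishing on the basis elements scaled by $B_1$-coefficients: for any $m, n \in Z$ and any $b, c \in B_1$, using the right $B_1$-linearity of $\alpha$ together with the standard identities $\inn{xa}{y} = a^* \inn{x}{y}$ and $\inn{x}{yb} = \inn{x}{y} b$ for the $B_1$-valued inner product, I compute
\[
\inn{\e_m b}{\alpha(\e_n c)} = \inn{\e_m b}{(\alpha \e_n) c} = b^* \inn{\e_m}{\alpha \e_n} c = 0.
\]
Extending in each variable by taking finite $B_1$-linear combinations, this gives $\inn{x}{\alpha y} = 0$ for all $x, y \in B_1^{\oplus Z}$.

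The second step is to pass to the completion. Since $B_1^{\oplus Z}$ is dense in $\H_Z(B)$, the Hilbert-module inner product is jointly continuous in its arguments, and $\alpha$ is bounded, the vanishing extends to $\inn{x}{\alpha y} = 0$ for all $x, y \in \H_Z(B)$. Taking $x = \alpha y$ yields $\inn{\alpha y}{\alpha y} = 0$, hence $\alpha y = 0$, and since $y$ was arbitrary, $\alpha = 0$.

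There is no substantive obstacle. The only points requiring a little care are keeping the left and right $B_1$-actions straight inside the inner product so that the right $B_1$-linearity of $\alpha$ can be invoked, and remembering that the entries of a $B$-valued matrix live in the unitization $B_1$ rather than in $B$ itself.
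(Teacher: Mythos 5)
Your proof is correct and follows essentially the same route as the paper's: both rest on the density of the $B_1$-linear span of the $\e_n$'s in $\H_Z(B)$ and the non-degeneracy of the $B_1$-valued inner product. You merely make explicit the step the paper leaves implicit, namely why $\inn{\e_m}{\alpha\e_n}=0$ for all $m$ forces $\alpha\e_n=0$.
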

\begin{proof}
  We note that the closed $B_1$-linear span of the~$\e_n$ coincides with~$\H_Z(B)$.
  A $Z$-squared $B$-valued matrix with no non-zero entries maps any finite linear combination of the~$\e_n$ to zero and hence any element of~$\H_Z(B)$ to zero.
  Therefore, such a matrix is zero.
\end{proof}

\begin{proposition}\label{2.1.4.2}
  $M_Z(B)$ includes~$M_Z$.
  That is, for any $Z$-squared matrix~$\alpha$, there exists a $Z$-squared $B$-valued matrix~$\beta$ such that $\inn{\e_m}{\alpha\e_n}=\inn{\e_m}{\beta\e_n}$ holds for all $m,n\in Z$.
\end{proposition}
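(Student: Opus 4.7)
The plan is to construct $\beta$ as the natural amplification of $\alpha$: concretely, since $\H_Z(B)$ is the exterior tensor product of the Hilbert space $\H_Z$ with the Hilbert $B_1$-module $B_1$, the operator $\beta$ should be $\alpha\otimes\id_{B_1}$, acting coordinate-wise. To make this rigorous, I would first use the canonical inclusion $\H_Z\hookrightarrow\H_Z(B)$, $\xi\mapsto\xi\cdot 1_{B_1}$, and define $\beta$ on the dense algebraic subspace $B_1^{\oplus Z}$ of $\H_Z(B)$ by
\begin{equation*}
  \beta\Bigl(\sum_{n\in F}\e_n b_n\Bigr):=\sum_{n\in F}(\alpha\e_n)\cdot b_n,
\end{equation*}
for finite $F\subseteq Z$ and $b_n\in B_1$, where $\alpha\e_n\in\H_Z$ is viewed inside $\H_Z(B)$ via the inclusion just mentioned.

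Next I would establish the norm bound $\norm{\beta\v}\leq\norm\alpha\norm\v$ on this dense subspace. Expanding the inner products gives $\inn{\beta\v}{\beta\v}=\sum_{m,n\in F}b_m^*\inn{\alpha\e_m}{\alpha\e_n}_{\H_Z}b_n$ and an analogous formula for $\inn\v\v$, so the estimate reduces to the auxiliary fact that for any positive operator $S\in M_Z$ with entries $s_{mn}:=\inn{\e_m}{S\e_n}$ and any finite family $(b_n)_{n\in F}$ in $B_1$, the element $\sum_{m,n\in F}b_m^*s_{mn}b_n$ of $B_1$ is positive. This in turn follows by factoring the positive finite scalar matrix $(s_{mn})_{m,n\in F}$ (positive since $S\geq 0$) as $U^*U$ for some $U\in M_F$, after which the sum becomes $\sum_k\abs{\sum_m u_{km}b_m}^2\geq 0$ in $B_1$. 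Applying this lemma to $S:=\norm\alpha^2\id-\alpha^*\alpha\geq 0$ yields the desired norm bound.

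Having extended $\beta$ by continuity to all of $\H_Z(B)$, I would then run the same construction with $\alpha^*$ in place of $\alpha$ to obtain an operator $\gamma$, and verify directly on pairs of vectors in $B_1^{\oplus Z}$ that $\inn{\v}{\beta\boldsymbol w}=\inn{\gamma\v}{\boldsymbol w}$; this shows $\beta$ is adjointable with adjoint $\gamma$, and hence $\beta\in\L(\H_Z(B))=M_Z(B)$. Finally the entries match:
\begin{equation*}
  \inn{\e_m}{\beta\e_n}=\inn{\e_m}{(\alpha\e_n)\cdot 1_{B_1}}=\inn{\e_m}{\alpha\e_n}_{\H_Z}\cdot 1_{B_1},
\end{equation*}
which is identified with the scalar $\inn{\e_m}{\alpha\e_n}_{\H_Z}$ under the canonical inclusion $\C\subseteq B_1$.

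The main obstacle is the positivity step inside the norm estimate, which is the only part that really uses C*-algebraic structure rather than purely algebraic manipulation; once that is in hand, both the bounded extension and the adjoint-verification are routine, and the entry computation is essentially a tautology.
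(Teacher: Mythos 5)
Your proof is correct, but it takes a genuinely different route from the paper's. The paper does not construct $\beta$ directly: it observes that the matrix units $\epsilon_{m,n}$ lie in both $M_Z$ and $M_Z(B)$, that the $M_Z$-norm is dominated by the $M_Z(B)$-norm on their linear span, and that the resulting contractive identity map extends to a $*$-homomorphism between the two norm-closures of that span; injectivity of the extension (via Proposition~\ref{2.1.4.1}) makes it isometric, and its inverse is taken as the embedding. You instead build the amplification $\alpha\otimes\id_{B_1}$ explicitly on the algebraic direct sum $B_1^{\oplus Z}$, prove the bound $\norm{\beta\v}\leq\norm{\alpha}\,\norm{\v}$ by a positivity argument (factoring the positive finite scalar matrix extracted from $\norm{\alpha}^2\id-\alpha^*\alpha$ as $U^*U$), and exhibit the adjoint by running the same construction for $\alpha^*$. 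Your approach is longer, but it buys something concrete: it applies verbatim to an arbitrary bounded $\alpha\in M_Z$, whereas the paper's argument takes place on the norm-closure of the span of the $\epsilon_{m,n}$'s, which for infinite $Z$ is only $\K(\H_Z)$ rather than all of $M_Z$, so the abstract route needs a further step (passing to multipliers, or precisely your amplification) to reach general bounded matrices such as the $\upsilon_n\in C^*_r(\Z)$ for which the proposition is later invoked. The only point worth spelling out in your write-up is why the finite submatrix $(s_{mn})_{m,n\in F}$ of a positive operator is a positive matrix (it is the compression to the span of $\{\e_n\}_{n\in F}$), but that is routine.
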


\begin{proof}
  We note that the~$\epsilon_{m,n}$ belong to both $M_Z$ and $M_Z(B)$.
  Since $\H_Z$ is included in~$\H_Z(B)$, the norm of~$M_Z$ is less than or equal to the norm of~$M_Z(B)$.
  Hence, the identity map on the algebraic complex-linear span of the~$\epsilon_{m,n}$ has a continuous extension from the closure in the norm of~$M_Z(B)$ to the closure in the norm of~$M_B$.
  The two closures are C*-subalgebras respectively and the extension is a $*$-homomorphism between them.
  In particular, the codomain one coincides with the whole of~$M_B$.
  By Proposition~\ref{2.1.4.1}, the extension is injective and hence admits the inverse.
  Under the inverse, $M_Z$ embeds into~$M_Z(B)$.
\end{proof}

\subsection{The reduced group C*-algebra $C^*(\Z)$}

For each $n\in\Z$, we define a $\Z$-squared matrix $\upsilon_n$ by $\upsilon_n\e_k:=\e_{n+k}$ for all $k\in\Z$.
The reduced group C*-algebra~$C^*(\Z)$ is the C*-subalgebra of~$M_\Z$ generated by $\set{\upsilon_n}{n\in\Z}$.

\begin{proposition}\label{2.1.5.2}
Let $c$ be a complex number with absolute value~$1$.
Then, there exists a unique *-homomorphism $\Phi_c\colon C^*(\Z)\to C^*(\Z)$ satisfying the equation $\Phi_c(\upsilon_n)=\upsilon_n c^n$ for all $n\in\Z$.
\end{proposition}

\begin{proof}
Uniqueness follows immediately from that $C^*(\Z)$ is generated by the $\upsilon_n$.
We show existence of $\Phi_c$.
Define a $\Z$-squared matrix~$\omega_c$ by $\omega_c\e_k:=\e_k c^k$ for all $k\in\Z$.
This is unitary.
Let $\Phi_c$ be the map from $C^*(\Z)$ to itself defined by $\Phi_c(\alpha):=\omega_c\alpha\omega_c^*$ for each $\alpha\in C^*(\Z)$.
This is a *-homomorphism.
The following holds for any $n,k\in\Z$:
\begin{equation*}
  \Phi_c(\upsilon_n)\e_k
  =\omega_c\upsilon_n\omega_c^*\e_k
  =\omega_c\upsilon_n\e_k{\bar c}\,^k
  =\omega_c\e_{n+k}{\bar c}\,^k
  =\e_{n+k}c^{n+k}{\bar c}\,^k
  =\upsilon_n\e_k c^n.
  \qedhere
\end{equation*}
\end{proof}

We consider the conditional expectation $\delta_\Z\colon M_\Z\to D_\Z$ defined in Proposition~\ref{2.1.2.6}.
This maps $\upsilon_0$ to~$1_{M_\Z}$ and $\upsilon_n$ ($n\neq  0$) to zero and hence the image of $C^*(\Z)$ is $\C 1_{M_\Z}\cong\C$.
For each $n\in\Z$, we define a bounded linear functional $\delta_n\colon C^*(\Z)\to\C$ by $\delta_n(\alpha):=\delta_\Z(\alpha\upsilon_{-n})$ for all $\alpha\in C^*(\Z)$.

\begin{proposition}\label{2.1.5.1}
  Let $B$ be a C*-algebra and $\xi$ be an element of the tensor product $B\otimes C^*(\Z)$.
  Assume that $(\id\otimes\delta_n)(\xi)=0$ for all $n\in\Z$.
  Then, $\xi=0$.
\end{proposition}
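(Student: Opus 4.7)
The plan is to recover $\xi$ from the family $\{(\id\otimes\delta_n)(\xi)\}_{n\in\Z}$ by a Fourier-synthesis argument driven by the gauge action~$\Phi_c$ introduced in~\ref{s2.1.5}.

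First I would extend the gauge action to the tensor product by setting $\Psi_c:=\id_B\otimes\Phi_c$ on $B\otimes C^*_r(\Z)$ for each $c\in\T$. Because $\Phi_c(\upsilon_n)=c^n\upsilon_n$ and finite linear combinations of the~$\upsilon_n$ are dense in~$C^*_r(\Z)$, the orbit map $c\mapsto\Psi_c(\xi)$ is norm-continuous from~$\T$ into $B\otimes C^*_r(\Z)$ for every fixed~$\xi$, via approximation of~$\xi$ by finite sums of elementary tensors $b\otimes\upsilon_n$.

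Next I would introduce the averaging operators
\begin{equation*}
  E_n(\xi):=\int_\T{\bar c}\,^n\,\Psi_c(\xi)\,dc\qquad(n\in\Z),
\end{equation*}
as Bochner integrals against normalized Haar measure on~$\T$. On an elementary tensor $b\otimes\upsilon_k$ one computes $E_n(b\otimes\upsilon_k)=\delta_{n,k}\,b\otimes\upsilon_n$; and using $\delta_n(\upsilon_k)=\delta_\Z(\upsilon_{k-n})=\delta_{n,k}$ from~\ref{s2.1.5}, one also has $\bigl((\id\otimes\delta_n)(b\otimes\upsilon_k)\bigr)\otimes\upsilon_n=\delta_{n,k}\,b\otimes\upsilon_n$. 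Both sides of the resulting identity $E_n(\xi)=(\id\otimes\delta_n)(\xi)\otimes\upsilon_n$ are bounded and linear in~$\xi$, so it persists on all of $B\otimes C^*_r(\Z)$. The hypothesis therefore yields $E_n(\xi)=0$ for every $n\in\Z$.

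Finally I would apply the Fejér-kernel approximation. With $K_N(c):=\sum_{|n|\leq N}(1-|n|/(N+1)){\bar c}\,^n$, integrating $K_N(c)\Psi_c(\xi)$ over~$\T$ produces the Cesàro sum
\begin{equation*}
  \sigma_N(\xi)=\sum_{|n|\leq N}\Bigl(1-\frac{|n|}{N+1}\Bigr)E_n(\xi),
\end{equation*}
which vanishes by the previous step. On the other hand, the standard approximate-identity properties of~$K_N$ (non-negativity, unit mass, and concentration at $c=1$) combined with the norm-continuity of $c\mapsto\Psi_c(\xi)$ force $\sigma_N(\xi)\to\Psi_1(\xi)=\xi$ in norm as $N\to\infty$. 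Hence $\xi=0$. The only non-routine point is this Banach-space-valued Fejér convergence, but it follows from uniform continuity of the orbit map on the compact group~$\T$ together with the three kernel properties just mentioned.
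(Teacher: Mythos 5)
Your argument is correct, but it takes a genuinely different route from the paper's. The paper proves Proposition~\ref{2.1.5.1} by embedding $B\otimes C^*_r(\Z)$ faithfully into the algebra $M_\Z(B)$ of $\Z$-squared $B$-valued matrices (via Proposition~\ref{2.1.4.2}, the commutation of the two factors there, and the nuclearity of $C^*_r(\Z)$); under this embedding the hypothesis $(\id\otimes\delta_n)(\xi)=0$ for all $n$ says precisely that every matrix entry of $\xi$ vanishes, and Proposition~\ref{2.1.4.1} finishes. You instead run the classical Fourier--Fej\'er argument for the circle action $\Psi_c=\id_B\otimes\Phi_c$: you identify the spectral projections $E_n$ with $(\id\otimes\delta_n)(\cdot)\otimes\upsilon_n$ on elementary tensors and extend by density (this step quietly uses that the slice map $\id\otimes\delta_n$ is bounded on the minimal tensor product, which is standard and is in any case already needed for the statement itself to make sense), and then Ces\`aro summation recovers $\xi$ from its vanishing Fourier coefficients. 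All the individual steps --- norm continuity of the orbit map, the Bochner-integral computation of $E_n$ on elementary tensors, and the Banach-space-valued Fej\'er convergence --- are sound. Your route avoids the $B$-valued matrix machinery of Propositions~\ref{2.1.4.1} and~\ref{2.1.4.2} entirely and would apply verbatim to any C*-algebra with a point-norm continuous $\T$-action whose Fourier modes are killed; the paper's route is more hands-on and reuses the matrix picture that drives the rest of Section~3 (notably the entry functions $\ent_n$). Both arguments ultimately lean on the nuclearity of $C^*_r(\Z)$, the paper to know the map into $M_\Z(B)$ is injective, and you to have an unambiguous C*-tensor product on which $\id_B\otimes\Phi_c$ acts isometrically.
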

\begin{proof}
  By Proposition~\ref{2.1.4.2}, we have $C^*(\Z)\subseteq M_\Z\subseteq M_\Z(B)$.
  We embed $B$ into $M_\Z(B)$ under the injective $*$-homomorphism $b\mapsto 1_{M_\Z(B)}\cdot b$.
  Under these embeddings, $B$ and $C^*(\Z)$ commute in~$M_\Z(B)$.
  So there exists a unique natural $*$-homomorphism $B\otimes C^*(\Z)\to M_\Z(B)$.
  By the nuclearity of $C^*(\Z)$, $B\otimes C^*(\Z)$ is isomorphic to its image under the $*$-homomorphism.
  
  The assumption of this proposition implies that every entry of the $\Z$-squared $B$-valued matrix~$\xi$ vanishes.
  Therefore, by Proposition~\ref{2.1.4.1}, we obtain that $\xi=0$.
\end{proof}

\section{Review of Complex Dynamical Systems}
\subsection{Branch points of holomorphic functions}

In this sub-subsection, we review branch points of holomorphic functions.
See \cite{F} for more detail.

\emph{The branch index} of a non-constant holomorphic function~$R$ at a point~$x$ in its domain, which is denoted by~$e_R(x)$ in this paper, is the minimum of the degrees of the non-constant terms with non-zero coefficients in the power series representation of~$R$ around~$x$.
The notion is derived from the next well-known fact.

\begin{fact}\label{2.2.1.1}
  Let $R$ be a non-constant holomorphic function with open domain~$D$ in the complex plain and $x$ be a point in~$D$.
  Suppose $y:=R(x)$ and $k:=e_R(x)$.
  Then there exists charts $\varphi\colon U\to V$ on~$D$ and $\psi\colon U'\to V'$ on the complex plane with the following properties.
  \begin{center}
    $x\in U$,\quad
    $\varphi(x)=0\in V$,\quad
    $y\in U'$,\quad
    $\psi(y)=0\in V'$,\quad
    $R(U)\subseteq U'$,\quad
    and $\psi\circ R\circ\varphi^{-1}(z)=z^k$ for all $z\in V$.
  \end{center}
\end{fact}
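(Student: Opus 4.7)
The plan is to reduce the statement to the model map $z \mapsto z^k$ by a standard application of the inverse function theorem together with the existence of a local holomorphic $k$-th root.

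First I would translate both the source and the target so that $x$ and $y$ become the origin: set $\varphi_0(z) := z - x$ on a small open disk around $x$ and $\psi(w) := w - y$ on a small open disk around $y$. After this normalization, $R$ is replaced by the map $\tilde R(z) = R(z+x) - y$, which is holomorphic in a neighborhood of $0$, satisfies $\tilde R(0) = 0$, and whose power series starts at degree $k$ by the definition of $e_R(x) = k$. Write $\tilde R(z) = z^k g(z)$ where $g$ is holomorphic near $0$ with $g(0) \neq 0$.

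Since $g(0) \neq 0$, there is an open disk around $0$ on which $g$ is nowhere zero, so a holomorphic branch of the logarithm of $g$ exists and hence a holomorphic $k$-th root $h(z)$ with $h(z)^k = g(z)$; this is the only nontrivial analytic ingredient and is the step I would flag as the core of the argument (if the domain were allowed to be non-simply-connected one would have to shrink further, but here we are working on a disk so there is no monodromy obstruction). Define $\eta(z) := z \cdot h(z)$. Then $\eta(0) = 0$ and $\eta'(0) = h(0) \neq 0$, so by the holomorphic inverse function theorem $\eta$ restricts to a biholomorphism $U_0 \to V$ for some open neighborhoods $U_0 \ni 0$ and $V \ni 0$; after shrinking $V$ to a disk if necessary, $V$ is open and connected. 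By construction,
\begin{equation*}
  \tilde R(z) = z^k g(z) = z^k h(z)^k = \eta(z)^k
  \quad\text{for $z \in U_0$.}
\end{equation*}

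Finally I would assemble the charts: set $U := x + U_0$ (an open neighborhood of $x$ in $D$) and $\varphi := \eta \circ \varphi_0|_U \colon U \to V$, which is a biholomorphism sending $x$ to $0$. Choose an open neighborhood $U' \ni y$ whose image $V' := \psi(U')$ is an open disk around $0$ containing $\{z^k : z \in V\}$; by continuity of $R$ we may further shrink $U$ so that $R(U) \subseteq U'$. Then for every $z \in V$ we have $\varphi^{-1}(z) \in U$, and unwinding the definitions gives $\psi\bigl(R(\varphi^{-1}(z))\bigr) = \tilde R(\eta^{-1}(z)) = \eta(\eta^{-1}(z))^k = z^k$, which is exactly the required identity. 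The only delicate points are the extraction of the $k$-th root (handled by simple-connectedness of a small disk) and the bookkeeping of neighborhoods to ensure $R(U) \subseteq U'$; both are routine once the holomorphic branch of $g^{1/k}$ is in hand.
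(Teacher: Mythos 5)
Your argument is correct and is essentially the same as the one the paper defers to: the paper gives no proof of its own but cites Forster's Theorem~2.1, whose proof is exactly this normalization--factor--$k$-th-root--inverse-function-theorem argument. The only point worth noting is that after shrinking $U$ to ensure $R(U)\subseteq U'$ you must take $V$ to be the (new) image $\varphi(U)$, which you handle implicitly; nothing is missing.
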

\begin{proof}
See \cite{F}, 2.1. Theorem.
\end{proof}

\begin{corollary}\label{2.2.1.2}
  Let $R$ be a non-constant holomorphic function with an open domain in the complex plain and $x$ be a point in the domain.
  Then the following are equivalent.
  
  \hangindent=2.5em
  (1) $x$ is a \emph{branch point of~$R$}, i.e. the restriction of~$R$ to any open neighborhood of~$x$ is not injective.
  
  \hangindent=2.5em
  (2) $x$ is a \emph{critical point of $R$}, i.e. the derivative of~$R$ vanishes at~$x$.
  
  \hangindent=2.5em
  (3) The branch index of~$R$ at~$x$ is greater than~$1$. 
\end{corollary}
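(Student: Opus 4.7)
The plan is to treat the equivalence of (2) and (3) directly from definitions, and then to prove (1) $\Leftrightarrow$ (3) by transporting $R$ to its canonical local form via Fact~\ref{2.2.1.1}.

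First I would dispatch (2) $\Leftrightarrow$ (3). By definition, $e_R(x)$ is the smallest positive integer $k$ for which the coefficient of $(z-x)^k$ in the power series of $R$ around $x$ is non-zero. The coefficient of $(z-x)^1$ is exactly $R'(x)$, so $e_R(x)\geq 2$ holds if and only if $R'(x)=0$. This gives (2) $\Leftrightarrow$ (3) with essentially no work.

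Next I would prove (3) $\Rightarrow$ (1). Let $k:=e_R(x)>1$ and take the charts $\varphi\colon U\to V$ and $\psi\colon U'\to V'$ given by Fact~\ref{2.2.1.1}, so that $\psi\circ R\circ\varphi^{-1}(z)=z^k$ on $V$. Given any open neighborhood $W$ of $x$, the set $\varphi(W\cap U)$ is an open neighborhood of $0$ in $V$, so it contains some small disk $\{|z|<r\}$. Picking any primitive $k$-th root of unity $\zeta\neq 1$ and any non-zero $z_0$ with $|z_0|<r$, the two distinct points $\varphi^{-1}(z_0)$ and $\varphi^{-1}(\zeta z_0)$ lie in $W$ and are sent by $R$ to the same point of $U'$, because $z_0^k=(\zeta z_0)^k$ and $\psi$ is injective. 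Hence $R|_W$ is not injective.

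For (1) $\Rightarrow$ (3) I would argue by contrapositive: suppose $e_R(x)=1$. Applying Fact~\ref{2.2.1.1} with $k=1$, the map $\psi\circ R\circ\varphi^{-1}$ is the identity on $V$, which is injective; composing with the homeomorphisms $\varphi$ and $\psi^{-1}$, the restriction $R|_U$ is injective, so $U$ is an open neighborhood of $x$ on which $R$ is injective, contradicting (1).

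The only part that requires any care is producing, in the step (3) $\Rightarrow$ (1), a pair of distinct points inside an \emph{arbitrary} neighborhood $W$ rather than the fixed chart domain $U$; but this is handled simply by shrinking to $W\cap U$ and working inside its image under $\varphi$. All other steps are either definitional or immediate from Fact~\ref{2.2.1.1}, so there is no substantive obstacle.
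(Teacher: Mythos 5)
Your proof is correct and follows exactly the route the paper intends: the paper states this as an unproved corollary of Fact~\ref{2.2.1.1}, and your argument derives all three equivalences from that local normal form (plus the definition of the branch index for (2)$\Leftrightarrow$(3)), handling the only delicate point --- producing a non-injective pair inside an arbitrary neighborhood $W$ rather than the chart domain $U$ --- correctly by shrinking to $W\cap U$.
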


\begin{fact}[\cite{F}, 2.4. Corollary]\label{2.2.1.3}
  Let $X$ and $Y$ be Riemann surfaces and let $R\colon X\to Y$ be a non-constant holomorphic mapping.
  Then $R$ is open, i.e., the image of every open set under~$R$ is open.
\end{fact}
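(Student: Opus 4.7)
The plan is to reduce openness of~$R$ to openness of the model map $z\mapsto z^k$ provided by the local normal form in Fact~\ref{2.2.1.1}. Openness is a local property on the domain, so it suffices to show that for every open $W\subseteq X$ and every $x\in W$, the image $R(W)$ contains an open neighborhood of $R(x)$ in~$Y$.

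First I would fix $x\in W$, set $y:=R(x)$ and $k:=e_R(x)\geq 1$, and apply Fact~\ref{2.2.1.1} in the Riemann surface setting (passing to local complex coordinates near~$x$ and~$y$) to obtain charts $\varphi\colon U\to V$ around~$x$ and $\psi\colon U'\to V'$ around~$y$ with $U\subseteq W$, $R(U)\subseteq U'$, $\varphi(x)=0$, $\psi(y)=0$, and $\psi\circ R\circ\varphi^{-1}(z)=z^k$ for all $z\in V$.

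Next I would verify that the model map $z\mapsto z^k$ is open on~$V$. Away from the origin its derivative $kz^{k-1}$ is non-zero, so the inverse function theorem makes it a local biholomorphism and hence locally open. At the origin, any open disk $\{z\in\C:\abs{z}<r\}\subseteq V$ is sent onto $\{w\in\C:\abs{w}<r^k\}$, which is open. Together these two cases show $z\mapsto z^k$ is open on all of~$V$.

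Combining the two steps, $\psi(R(U))$ is open in~$V'$ and contains $0=\psi(y)$, so $R(U)\subseteq R(W)$ is an open neighborhood of~$y$ in~$Y$. Since $x\in W$ was arbitrary, $R(W)$ is open. The one place needing care is the openness of $z\mapsto z^k$ at the critical point $z=0$, which is handled by the explicit disk computation above; once the local normal form is in hand, the rest of the argument is essentially automatic.
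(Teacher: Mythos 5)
Your proof is correct. The paper itself supplies no argument for this statement (it is quoted as a Fact with a citation to Forster), but your derivation --- reduce to the local normal form of Fact~\ref{2.2.1.1} and check that the model map $z\mapsto z^k$ is open, with the only delicate point being the critical point $z=0$, which you handle by the explicit disk computation $\{\abs{z}<r\}\mapsto\{\abs{w}<r^k\}$ --- is precisely the standard proof given in the cited source, so your write-up simply fills in the omitted details correctly.
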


\subsection{Branch points and fixed points of rational functions}

A rational function is a function which can be written as a quotient of polynomials.
This is a holomorphic function from the Riemann sphere to itself.
The degree of a rational function~$R$ is the larger of the degrees of $P$ and $Q$, where $P$ and $Q$ are coprime polynomials with~$R=P/Q$.

The next proposition follows from the compactness of the Riemann sphere and the identity theorem.

\begin{proposition}\label{2.2.2.1}\ \par
  (1) A rational function with infinitely many critical points is a constant function.
  
  (2) A rational function with infinitely many fixed points is an identity function.
\end{proposition}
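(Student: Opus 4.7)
The plan is to reduce both parts to the same template: in each case I would exhibit a non-zero meromorphic function on the Riemann sphere whose zero set contains the points in question, use the compactness of the Riemann sphere to produce an accumulation point among those zeros, and then invoke the identity theorem to force the meromorphic function to vanish identically, which is absurd.

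For part~(1) I would take the derivative~$R'$, which is again a rational function and therefore meromorphic on the whole Riemann sphere with only finitely many poles. By Corollary~\ref{2.2.1.2}, the critical points of~$R$ are precisely the zeros of~$R'$, so if~$R$ has infinitely many critical points they form an infinite subset of the compact Riemann sphere and hence accumulate at some point~$x_0$. This accumulation point cannot be a pole of~$R'$, since $R'$ is large, and therefore non-vanishing, in a punctured neighborhood of any pole. Consequently $R'$ is holomorphic on a connected open neighborhood of~$x_0$ on which it has a non-isolated zero, and the identity theorem forces $R'\equiv 0$ on the whole open set where $R'$ is holomorphic (the Riemann sphere minus a finite set is still connected). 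Hence $R'$ vanishes identically, so $R$ is constant.

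For part~(2) I would run the identical argument with the meromorphic function $R(z)-z$ in place of~$R'$: its zeros are precisely the fixed points of~$R$, its poles form a finite set, and infinitely many fixed points would again accumulate at a non-pole point, forcing $R(z)-z\equiv 0$ by the identity theorem, i.e.\ $R=\id$. I do not anticipate a serious obstacle; the only step requiring care is checking that the accumulation point of zeros cannot coincide with a pole of the auxiliary function, and this is immediate from the isolation of poles together with the non-vanishing of a meromorphic function in some punctured neighborhood of each of its poles.
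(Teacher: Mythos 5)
Your argument is correct and is exactly the route the paper intends: the paper offers no detailed proof, remarking only that the proposition ``follows from the compactness of the Riemann sphere and the identity theorem,'' and your write-up (accumulation of the infinitely many zeros of $R'$, resp.\ of $R(z)-z$, at a non-pole point, then the identity theorem on the connected complement of the finitely many poles) is precisely a fleshed-out version of that remark. The only cosmetic caveat is that the identification of critical points with zeros of $R'$ via Corollary~\ref{2.2.1.2} is literally valid only away from the poles of $R$ and from $\infty$, but since those form a finite set this does not affect the argument.
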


In particular, a rational function with~${\deg}\geq 2$ has only finite many branch points and  fixed points.

\subsection{Fatou and Julia sets}

In this sub-subsection, we review the Fatou and Julia set of a rational function.
See \cite{CG}, Chapter III for more detail.

Let $R$ be a rational function.
Open sets~$U$ on which the family of all iterations of~$R$ is a normal family are closed under union, and hence there exists the largest one of such open sets.
The largest one is called \emph{the Fatou set of~$R$} and its complement in the Riemann sphere is called \emph{the Julia set of $R$}.

\begin{fact}[\cite{CG}, Section III Theorem 1.2, 1.3, 1.8]\label{2.2.3.1}
  The Julia set~$J$ of a rational function~$R$ with~$\deg\geq 2$ is
  \begin{itemize}
    \item non-empty,
    \item completely invariant under~$R$ (i.e. $R^{-1}(J)=J=R(J)$), and
    \item perfect (i.e. $J$ has no isolated points).
  \end{itemize}
\end{fact}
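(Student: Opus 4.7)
The plan is to establish the three properties---non-emptiness, complete invariance, and perfectness---using Montel's theorem, the open mapping theorem (Fact~\ref{2.2.1.3}), and the rapid growth $\deg R^n = d^n$ with $d = \deg R \geq 2$. Throughout I write $F := \hat\C \setminus J$ for the Fatou set and treat $R$ as a self-map of the Riemann sphere $\hat\C$.

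For non-emptiness, I would argue by contradiction. If $J = \emptyset$, then $\{R^n\}$ is normal on the compact space $\hat\C$, so some subsequence $R^{n_k}$ converges uniformly to a holomorphic limit $g \colon \hat\C \to \hat\C$. Such a $g$ is either constant or a rational function of fixed finite degree $m$. The non-constant case is excluded by a Hurwitz-type preimage count: $R^{n_k}$ has $d^{n_k} \to \infty$ preimages over a generic point, while $g$ has only $m$. The constant case is excluded by picking two distinct periodic (or fixed) points of $R$, whose orbits cannot both converge uniformly to a single value.

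For complete invariance, it suffices to prove $R^{-1}(F) = F = R(F)$. The inclusion $R^{-1}(F) \subseteq F$ is direct: if $R(z) \in F$ with $\{R^n\}$ normal on a neighborhood $V$ of $R(z)$, then on $U := R^{-1}(V) \ni z$ each $R^{n+1}$ factors as $R^n \circ R$, so any convergent subsequence of $R^n$ on $V$ lifts to a convergent subsequence of $R^{n+1}$ on $U$ (and the remaining single index $R^0 = \id$ is harmless). For $F \subseteq R^{-1}(F)$, take $z \in F$ with normality on an open $U \ni z$; the open mapping theorem makes $R(U)$ an open neighborhood of $R(z)$, and away from branch points a local inverse branch of $R$ transfers uniform convergence of $R^{n_k+1}$ on $U$ to uniform convergence of $R^{n_k}$ on $R(U)$. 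The branch-point case is handled by shrinking $U$ so that $z$ is the only branch point inside (possible since branch points are isolated by Proposition~\ref{2.2.2.1}) and working with equicontinuity in the spherical metric, since equicontinuity on $U$ together with continuity of $R$ yields equicontinuity on $R(U)$.

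For perfectness, the key step is to show that the backward orbit $\bigcup_{n\geq 0} R^{-n}(w)$ of any $w \in J$ is dense in $J$. Given an open $U$ meeting $J$, the family $\{R^n|_U\}$ is not normal, so by Montel's theorem $\bigcup_n R^n(U)$ omits at most two values in $\hat\C$. Hence for every $w$ outside this exceptional set, $U$ meets $R^{-n}(w)$ for some $n$. Complete invariance together with each $w \in J$ having $d^n$ preimages under $R^n$ (which lie in $J$) forces $J$ to be infinite, so we may always choose $w \in J$ outside the two-point exceptional set. Every $z \in J$ is therefore an accumulation point of $J$, proving perfectness. The main obstacle throughout is the careful handling of branch points and the Montel exceptional set; for the latter, the standard observation is that exceptional points have finite total backward orbit while $J$ is infinite, so a witness in $J$ outside the exceptional set always exists.
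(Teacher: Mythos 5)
First, note that the paper does not prove this statement at all: it is imported as a \emph{Fact} with a citation to Carleson--Gamelin, Section III, Theorems 1.2, 1.3 and 1.8, so there is no in-paper argument to compare against. Your sketch follows the standard textbook route (normal families, Montel's theorem, openness of~$R$), and the non-emptiness and complete-invariance parts are essentially sound, modulo one soft spot: in the constant-limit case of non-emptiness you appeal to ``two distinct periodic (or fixed) points,'' but a rational map of degree~$\geq 2$ need not have two distinct fixed points (e.g.\ $R(z)=z+1/z$ has only the fixed point~$\infty$, with multiplicity three), and producing two distinct periodic points without circularity takes an extra step. The cleaner exit is that a non-constant rational map is surjective onto the sphere, so $R^{\circ n_k}(\hat{\C})=\hat{\C}$ cannot be contained in a small disk about the putative constant limit.

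The genuine gap is in the perfectness argument. You show that for a suitable $w\in J$ outside the Montel exceptional set, every open $U$ meeting $J$ meets $R^{-n}(w)$ for some~$n$, and conclude that every $z\in J$ is an accumulation point of~$J$. That conclusion does not follow: if $z$ were isolated in~$J$, the point $u\in U\cap R^{-n}(w)$ (which lies in~$J$ by complete invariance) could be $z$ itself, which happens exactly when $w=R^{\circ n}(z)$. To rule this out one must choose $w\in J$ non-exceptional and \emph{not on the forward orbit of~$z$}, and justifying that such a~$w$ exists is precisely where Carleson--Gamelin and Milnor perform a case analysis (if $z$ is not periodic, take $w\in R^{-1}(z)\setminus\{z\}$; if $z$ is periodic of period~$p$, use that $z$ cannot be the unique $R^{\circ p}$-preimage of itself, since that would make it exceptional and hence in the Fatou set). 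An alternative patch: the set of accumulation points of~$J$ is nonempty (as $J$ is infinite and compact), closed, and contains the backward orbit of any of its points; by your Montel step that backward orbit is dense in~$J$, so the set of accumulation points is all of~$J$. Either repair is routine, but as written the final step of your perfectness proof is a non sequitur.
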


\begin{example}[\cite{CG}, p.55]\label{2.2.3.2}\ \par
  (1) The Julia set of $R(z)=z^2$ is the unit circle~$|z|=1$.
  
  (2) The Julia set of $R(z)=z^2-2$ is the closed interval~$[-2,2]$.
\end{example}

\section{Review of Kajiwara--Watatani Algebras}

In this subsection, we briefly review the definition of C*-algebras associated with complex dynamical systems introduced in~\cite{KW05}.

Let
  $R$ be a rational function with~$\deg\geq 2$ and
  $X$ be its Julia set, its Fatou set, or the whole of the Riemann sphere.
Suppose that $X$ is not empty.
We note that $X$ is locally compact, completely invariant under $R$, and perfect.

Hereafter, we regard $R$ as a map from~$X$ to itself and consider the dynamical system $(X,R)$.

Let $A:=C_0(X)$ as a C*-algebra and let $E$ be the completion of the pre-Hilbert $A$-$A$-bimodule defined as the complex-linear space $C_c(X)$ endowed with the left and right $A$-scalar multiplication and $A$-valued inner product defined by the following.
\begin{equation*}
  \begin{array}{l}
    (a\cdot f)(x):=a(x)f(x)\\
    (f\cdot a)(x):=f(x)a(R(x))
  \end{array}
  \quad\text{for each $a\in A$, $f\in C_c(X)$ and $x\in X$}.
\end{equation*}
\begin{equation*}
  \inn{f}{g}(y):=\stacksum{x\in X}{R(x)=y}\hspace{-0.5em}e_R(x)\overline{f(x)}g(x)\quad\text{for each $f,g\in C_c(X)$ and $y\in X$}.
\end{equation*}
Then, $E$ is a C*-correspondence over~$A$, which is called \emph{the Kajiwara--Watatani C*-correspondence}.

The left action $A\to\L(E)$, $a\mapsto (f\mapsto a\cdot f)$ is injective.
Therefore, we can define the Cuntz--Pimsner algebra associated with the C*-correspondence~$E$ over~$A$.
The C*-algebra is called \emph{the Kajiwara--Watatani algebra} associated with the complex dynamical system~$(X,R)$ and denoted by~$\O_R(X)$.

\begin{remark}
  For any $f\in C_c(X)$, the inequality $\norm{f}_{C_0(X)}\leq\norm{f}_{E}\leq\sqrt{\deg R}\cdot\norm{f}_{C_0(X)}$ holds and hence the two norms on~$C_c(X)$ are equivalent.
  Therefore, $E$ coincides with~$C_0(X)$ as a complex-linear space.
\end{remark}

\section{Certain Representations of Kajiwara--Watatani C*-correspondences}
\subsection{Definition and fundamental properties of representation $\rho$}

We define a equivalence relation~$\sim$ on~$X$ by the following: for each $x,y\in X$,
\begin{equation*}
  x\sim y\defiff\text{$R^{\circ m}(x)=R^{\circ n}(y)$ holds for some $m,n\in\N\sqcup\{0\}$.}
\end{equation*}
Here, $R^{\circ n}$ denotes the $n$-th iterate of~$R$, that is, $R^{\circ 0}=\id$, $R^{\circ 1}=R$, $R^{\circ 2}=R\circ R$, and so on.
In this paper, we call the equivalence class of a point~$x$ under the relation~$\sim$ \emph{the orbit of~$x$ under~$R$}.
Since $R$ is a rational function, the orbit of a point under~$R$ is countable and completely invariant under~$R$.
\emph{The orbit of a subset~$S$ under~$R$} is the union of orbits of elements of~$S$, which is the minimum of completely invariant subsets including~$S$.

Consider the orbit under~$R$ of the set of all the fixed points and branch points of the iterations of~$R\colon X\to X$.
This is countable since each iteration of~$R$ has only finite many branch points and fixed points (Proposition \ref{2.2.2.1}).
Let $\X$ be the complement of the orbit.
Here we recall that $X$ has no isolated points.
So $\X$ is comeager and hence dense in $X$ by the Baire category theorem, and also completely invariant under~$R$.

We define maps $\rho_A\colon A\to M_\X$ and $\rho_E\colon E\to M_\X$ by the following:
\begin{alignat*}{3}
  &\rho_A(a)\e_x
  &&:=\e_xa(x)
  &\quad&\text{for each $a\in A=C_0(X)$ and $x\in\X$.}\\
  &\rho_E(f)\e_y
  &&:=\stacksum{x\in\X}{R(x)=y}\hspace{-0.5em}\e_xf(x)
  &\quad&\text{for each $f\in E=C_0(X)$ and $y\in\X$.}
\end{alignat*}

\begin{proposition}
$\rho=(\rho_A,\rho_E)$ is a representation of the C*-correspondence~$E$ over~$A$.
\end{proposition}
\begin{proof}
It is easy to verify that $\rho_A$ is a *-homomorphism and that $\rho_E$ is a complex-linear map.

We show that $\rho$ preserves the inner product.
Take any $f,g\in E$ and $y,y’\in\X$.
We want to show the equation $\inn{\e_y}{\rho_E(f)^*\rho_E(g)\e_{y’}}=\inn{\e_y}{\rho_A(\inn{f}{g})\e_{y’}}$.
We have 
\begin{align*}
  (\text{LHS})
  &=\inn{\rho_E(f)\e_y}{\rho_E(g)\e_{y’}}\\
  &=\INN\bigg{\sum_{\substack{x\in\X\\R(x)=y}}\e_xf(x)}{\sum_{\substack{z\in\X\\R(z)=y’}}\e_zg(z)}\\
  &=\sum_{\substack{x\in\X\\R(x)=y}}\sum_{\substack{z\in\X\\R(z)=y’}}\overline{f(x)}\inn{\e_x}{\e_z}g(z)\\
  &=\begin{cases}
    \displaystyle\sum_{\substack{x\in\X\\R(x)=y}}\overline{f(x)}g(x)&\text{if $y=y’$,}\\
    0&\text{otherwise.}
  \end{cases}
\end{align*}
And also we have
\begin{align*}
  (\text{RHS})
  &=\inn{\e_y}{\e_{y’}}\inn{f}{g}(y’)\\
  &=\inn{\e_y}{\e_{y’}}\inn{f}{g}(y)\\
  &=\inn{\e_y}{\e_{y’}}\sum_{\substack{x\in X\\R(x)=y}}e_R(x)\overline{f(x)}g(x).\\
  \intertext{Since $y\in\X$ and $\X$ is completely invariant under $R$, we have}
  &=\inn{\e_y}{\e_{y’}}\sum_{\substack{x\in\X\\R(x)=y}}e_R(x)\overline{f(x)}g(x).\\
  \intertext{By the definition of $\X$, branch points of $R$ are not in $\X$ and hence the branch index $e_R$ takes value $1$ constantly on $\X$. So we have}
  &=\inn{\e_y}{\e_{y’}}\sum_{\substack{x\in\X\\R(x)=y}}\overline{f(x)}g(x).
\end{align*}
So $(\text{LHS})=(\text{RHS})$ holds.
Since it holds for any $y,y’\in\X$, we have $\rho_E(f)^*\rho_E(g)=\rho_A(\inn{f}{g})$.
Therefore, $\rho$ preserves the inner product.

We show that $\rho$ preserves left $A$-scalar multiplication.
Take any $f,g\in E$.
For any $y\in\X$, we have 
\begin{gather*}
\rho_E(af)\e_y
=\sum_{\substack{x\in\X\\R(x)=y}}\e_xa(x)f(x)
=\sum_{\substack{x\in\X\\R(x)=y}}\rho_A(a)\e_xf(x)\\
=\rho_A(a)\sum_{\substack{x\in\X\\R(x)=y}}\e_xf(x)
=\rho_A(a)\rho_E(f)\e_y.
\end{gather*}
So we obtain that $\rho_E(af)=\rho_A(a)\rho_E(f)$.
Therefore, $\rho$ preserves left $A$-scalar multiplication.

By the above discussion, $\rho$ is a representation of $E$.
\end{proof}

\begin{proposition}\label{3.1.1}
  The representation~$\rho$ is injective.
\end{proposition}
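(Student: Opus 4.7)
The plan is to unpack the definition of injectivity for a representation and reduce to showing that $\rho_A$ has trivial kernel. By the definition in the Cuntz--Pimsner review subsection, $\rho = (\rho_A,\rho_E)$ is injective exactly when $\rho_A : A \to M_\X$ is injective; injectivity of $\rho_E$ then comes for free. So the entire task is to check that $\rho_A(a) = 0$ forces $a = 0$ in $A = C_0(X)$.

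First, I would read off the diagonal entries of $\rho_A(a)$. From the defining formula $\rho_A(a)\e_x = \e_x a(x)$, the $(x,x)$-entry of the $\X$-squared matrix $\rho_A(a)$ is exactly the scalar $a(x)$ (and all off-diagonal entries vanish). Hence $\rho_A(a) = 0$ implies $a(x) = 0$ for every $x \in \X$.

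Second, I would invoke density. The discussion preceding the proposition establishes that $\X$ is the complement in $X$ of a countable set (the orbit of the fixed and branch points of all iterates of $R$), and that $\X$ is comeager and therefore dense in $X$ by the Baire category theorem applied to the locally compact perfect space $X$. Since $a \in C_0(X)$ is continuous and vanishes on the dense set $\X$, it vanishes identically on $X$. This gives $\rho_A(a) = 0 \Rightarrow a = 0$, and so $\rho_A$ is injective, completing the proof.

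There is essentially no obstacle here; the only subtle point is ensuring that density of $\X$ in $X$ is genuinely available, which is why the prior paragraph constructs $\X$ as the complement of a countable subset of a perfect space and invokes Baire. No deeper structure of the Kajiwara--Watatani C*-correspondence is needed at this stage.
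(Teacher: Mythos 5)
Your proposal is correct and follows essentially the same route as the paper: reduce to injectivity of $\rho_A$, read off $a(x)=\inn{\e_x}{\rho_A(a)\e_x}=0$ for every $x\in\X$, and conclude $a=0$ from the density of $\X$ in $X$ together with continuity of $a$. No discrepancy to report.
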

\begin{proof}
  We only show the injectivity of~$\rho_A$.
  Take any $a\in\ker\rho_A$.
  For any point $x\in\X$, we have 
  \begin{equation*}
    a(x)
    =\inn{\e_x}{\rho_A(a)\e_x}
    =\inn{\e_x}{0\e_x}
    =0.
  \end{equation*}
  Since $\X$ is dense in~$X$, we have $a=0$.
  Therefore, $\rho_A$ is injective.
\end{proof}

\begin{lemma}\label{3.1.2}
  Let $\alpha$ be an $\X$-squared matrix.
  Assume that $\alpha\rho_E(f)=0$ holds for all $f\in E$.
  Then, $\alpha=0$ holds.
\end{lemma}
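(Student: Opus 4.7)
The plan is to reduce the hypothesis to an entrywise vanishing statement and then invoke Proposition~\ref{2.1.4.1} (with $B=\C$) to conclude $\alpha=0$. First, I would unpack the assumption: for every $f\in E=C_0(X)$ and every $y\in\X$,
\begin{equation*}
  0 = \alpha\rho_E(f)\e_y = \stacksum{x\in\X}{R(x)=y}\hspace{-0.5em}(\alpha\e_x)f(x),
\end{equation*}
and pairing against $\e_z$ for an arbitrary $z\in\X$ yields the scalar identity $\sum_{x\in\X,\,R(x)=y}\inn{\e_z}{\alpha\e_x}f(x)=0$ for every $f\in C_0(X)$.

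Next, I would fix any $x_0\in\X$ and set $y_0:=R(x_0)$; by complete invariance of $\X$ we have $y_0\in\X$, and by the very construction of $\X$ as the complement of the orbit of all branch points of the iterations of $R$, every preimage $x\in R^{-1}(y_0)$ also lies in $\X$ and satisfies $e_R(x)=1$. Thus the fiber $R^{-1}(y_0)$ is a finite set $\{x_0,x_1,\dots,x_k\}$ of pairwise distinct points in the locally compact Hausdorff space $X$, and Urysohn's lemma furnishes $f\in C_0(X)$ with $f(x_0)=1$ and $f(x_i)=0$ for $i\geq 1$. Substituting this $f$ into the scalar identity collapses the sum to $\inn{\e_z}{\alpha\e_{x_0}}=0$; since $z$ and $x_0$ were arbitrary in $\X$, every entry of $\alpha$ vanishes, so Proposition~\ref{2.1.4.1} yields $\alpha=0$.

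The only subtle point is ensuring that the fiber $R^{-1}(y_0)$ contains no branch points of $R$, so that its preimages are genuinely distinct and thus separable by a continuous function. This is precisely the reason $\X$ was defined to avoid the entire orbit of branch points rather than only the branch points themselves; beyond that, the argument is a standard separation-of-points construction and I do not anticipate further difficulty.
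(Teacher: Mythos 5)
Your argument is correct and is essentially the paper's own proof: both hinge on choosing, for a fixed $x_0\in\X$, a continuous function that is $1$ at $x_0$ and $0$ at the other (finitely many) points of the fiber $R^{-1}(R(x_0))$, and substituting it into the hypothesis to isolate the contribution of $x_0$. The only cosmetic difference is that the paper concludes $\alpha\e_{x_0}=\boldsymbol{0}$ directly at the vector level, whereas you pass to entries and invoke Proposition~\ref{2.1.4.1}; also note that the separation step needs only that the fiber is a finite set of distinct points, so your worry about branch points is not actually load-bearing.
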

\begin{proof}
  Take any $x\in\X$.
  Since the set $R^{-1}(R(x))$ is finite, there exists a continuous function~$f$ on~$X$ with the following conditions:
  \begin{equation*}
    f(x)=1,\qquad
    f(y)=0\quad\text{for each $y\in\inv{R}(R(x))\setminus\{x\}$}.
  \end{equation*}
  We have
  \begin{equation*}
    \alpha\e_x
    =\alpha\hspace{-1em}\stacksum{y\in\X}{R(y)=R(x)}\hspace{-1em}\e_yf(y)
    =\alpha\rho_E(f)\e_{R(x)}
    =0\e_{R(x)}
    =\boldsymbol{0}.
  \end{equation*}
  Therefore, $\alpha\e_x=\boldsymbol{0}$ holds for any $x\in\X$ and hence $\alpha=0$.
\end{proof}

\begin{proposition}\label{3.1.3}
  The representation~$\rho$ is covariant.
\end{proposition}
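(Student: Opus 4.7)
The plan is to combine Proposition~\ref{2.1.3.2} with Lemma~\ref{3.1.2} in a very direct way. By definition, covariance of $\rho$ means that $\rho_A(a)=\psi_\rho(\varphi(a))$ for every $a\in\I_E$. Both sides lie in $M_\X$ (since $C^*(\rho)\subseteq M_\X$), so their difference is an $\X$-squared matrix and Lemma~\ref{3.1.2} is available as a uniqueness tool.

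Concretely, I would fix $a\in\I_E$ and set $\alpha:=\rho_A(a)-\psi_\rho(\varphi(a))\in M_\X$. For an arbitrary $f\in E$, Proposition~\ref{2.1.3.2} gives $\psi_\rho(\varphi(a))\rho_E(f)=\rho_A(a)\rho_E(f)$, hence
\begin{equation*}
  \alpha\,\rho_E(f)=\rho_A(a)\rho_E(f)-\psi_\rho(\varphi(a))\rho_E(f)=0.
\end{equation*}
Since this holds for every $f\in E$, Lemma~\ref{3.1.2} forces $\alpha=0$, which is exactly the covariance identity $\rho_A(a)=\psi_\rho(\varphi(a))$.

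There is no real obstacle here: the heavy lifting has already been done in Proposition~\ref{2.1.3.2} (which gives the covariance identity after multiplying by $\rho_E(f)$) and in Lemma~\ref{3.1.2} (which says that left multiplication by $\rho_E(f)$'s for varying $f$ separates $\X$-squared matrices). The only thing to verify implicitly is that the construction makes sense, i.e.\ that $\psi_\rho(\varphi(a))$ is defined, which requires $a\in\I_E$; this is precisely the hypothesis under which covariance needs to be checked. So the proof is essentially a two-line assembly of the preceding results.
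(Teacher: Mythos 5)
Your proof is correct and is exactly the paper's argument: the paper simply states that covariance follows immediately from Proposition~\ref{2.1.3.2} and Lemma~\ref{3.1.2}, and your write-up spells out precisely that assembly (forming $\alpha=\rho_A(a)-\psi_\rho(\varphi(a))$, killing it against every $\rho_E(f)$, and invoking the uniqueness lemma). No differences to report.
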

\begin{proof}
  In this proof, let $\I_E$ denote the covariant ideal of $E$.
  By Proposition~\ref{2.1.3.2}, for any $a\in\I_E$ and $f\in E$ we have the equation 
  \begin{equation*}
    \psi_\rho(\varphi(a))\rho_E(f)=\rho_A(a)\rho_E(f)
  \end{equation*}
  and hence 
  \begin{equation*}
    \bigl(\psi_\rho(\varphi(a))-\rho_A(a)\bigr)\rho_E(f)=0.
  \end{equation*}
  Applying Lemma~\ref{3.1.2}, we have $\psi_\rho(\varphi(a))-\rho_A(a)=0$, i.e., $\psi_\rho(\varphi(a))=\rho_A(a)$.
  Since this holds for any $a\in\I_E$, the representation $\rho$ is covariant.
\end{proof}

\subsection{$\rho_n$ and $E_n$}

For each $n\in\N\sqcup\{0\}$, we define $\rho_n$ and $E_n$ as below.
\begin{itemize}
  
  \item $\rho_n\colon C_0(X)\to M_\X$ is the complex-linear operator defined by
  \begin{equation*}
    \rho_n(f)\e_y:=\stacksum{x\in\X}{R^{\circ n}(x)=y}\hspace{-1em}\e_xf(x)\quad\text{for each $f\in E=C_0(X)$ and $y\in\X$.}
  \end{equation*}
  Endowing the domain with the supremum norm and the codomain with the operator norm, the operator $\rho_n$ is bi-Lipschitz.
  We note that $\rho_0=\rho_A$ and $\rho_1=\rho_E$.
  
  \item $E_n$ is the range of $\rho_n$. This is a Banach subspace of $M_\X$.
\end{itemize}
The next equation follows immediately from the definition of $\rho_n$:
\begin{equation*}
  \bigl(\text{the $(x,y)$-entry of $\rho_n(f)$}\bigr)
  =\inn{\e_x}{\rho_n(f)\e_y}
  =\begin{cases}
    f(x)&\text{if $R^{\circ n}(x)=y$},\\
    0&\text{otherwise}.
  \end{cases}
\end{equation*}

\begin{proposition}\label{3.2.1}
  The following equation holds for any $f,g\in C_0(X)$ and $m,n\in\N\sqcup\{0\}$:
  \begin{equation*}
    \rho_m(f)\rho_n(g)
    =\rho_{m+n}\bigl(f\cdot (g\circ R^{\circ m})\bigr).
  \end{equation*}
\end{proposition}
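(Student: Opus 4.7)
The plan is to verify the identity by evaluating both sides on the basis vector $\e_z$ for an arbitrary $z \in \X$; since $M_\X$ is the algebra of bounded operators on $\H_\X$, agreement on every such vector is equivalent to equality of the two operators.

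First I would apply $\rho_m(f)\rho_n(g)$ to $\e_z$. By the explicit formula for $\rho_n$, we have $\rho_n(g)\e_z = \sum_y \e_y g(y)$, the sum running over the \emph{finitely many} points $y\in\X$ with $R^{\circ n}(y)=z$. Here it is worth noting that since $z\in\X$ and $\X$ is completely invariant under $R$, every element of $R^{-n}(z)$ automatically lies in $\X$, so the domain of summation is correct. Because this sum is finite, $\rho_m(f)$ can be distributed over it, producing a double sum over pairs $(x,y)$ with $x,y\in\X$, $R^{\circ m}(x)=y$, and $R^{\circ n}(y)=z$, with coefficients $f(x)g(y)$.

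Next I would reparametrise the double sum: the two conditions on $(x,y)$ are jointly equivalent to $R^{\circ(m+n)}(x)=z$ together with $y=R^{\circ m}(x)$, so $y$ is uniquely determined by $x$ and the inner sum collapses. What remains is $\sum_x \e_x f(x)\,g(R^{\circ m}(x))$ over $x\in\X$ with $R^{\circ(m+n)}(x)=z$. Recognising $f(x)g(R^{\circ m}(x))$ as the value at $x$ of $f\cdot(g\circ R^{\circ m})$, this is precisely $\rho_{m+n}\bigl(f\cdot(g\circ R^{\circ m})\bigr)\e_z$, completing the verification.

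The argument is essentially bookkeeping and I do not anticipate a genuine obstacle; the only nontrivial inputs are the complete invariance of $\X$ (to keep index sets inside $\X$) and the finiteness of the fibres of each $R^{\circ n}$ (so that all sums are finite and the linear operator $\rho_m(f)$ may be distributed termwise). For completeness one should also observe that $f\cdot(g\circ R^{\circ m})$ lies in $C_0(X)$, which is immediate from $f\in C_0(X)$ and the boundedness of $g\circ R^{\circ m}$, so that the right-hand side is well-defined.
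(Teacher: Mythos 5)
Your proposal is correct and follows essentially the same route as the paper: both are direct matrix-multiplication bookkeeping, the paper computing the $(x,z)$-entries $\inn{\e_x}{\rho_m(f)\rho_n(g)\e_z}$ via the resolution of the identity while you compute the columns $\rho_m(f)\rho_n(g)\e_z$ and collapse the double sum using $y=R^{\circ m}(x)$. The supporting observations you flag (finiteness of fibres, complete invariance of $\X$) are exactly the ones implicitly used in the paper's computation.
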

\begin{proof}
  We have the following for any $x,z\in\X$:
  \begin{align*}
    \inn{\e_x}{\rho_m(f)\rho_n(g)\e_z}
    &=\inn{\e_x}{\rho_m(f)\sum_{y\in\X}\e_y\inn{\e_y}{\rho_n(g)\e_z}}\\
    &=\sum_{y\in\X}\inn{\e_x}{\rho_m(f)\e_y\inn{\e_y}{\rho_n(g)\e_z}}\\
    &=\sum_{y\in\X}\inn{\e_x}{\rho_m(f)\e_y}\inn{\e_y}{\rho_n(g)\e_z}\\
    &=\inn{\e_x}{\rho_m(f)\e_{R^{\circ m}(x)}}\inn{\e_{R^{\circ m}(x)}}{\rho_n(g)\e_z}\\
    &=f(x)\inn{\e_{R^{\circ m}(x)}}{\rho_n(g)\e_z}\\
    &=\begin{cases}
      f(x)g(R^{\circ m}(x))&\text{if $R^{\circ (m+n)}(x)=z$}\\
      0&\text{otherwise}
    \end{cases}\\
    &=\inn{\e_x}{\rho_{m+n}\bigl(f\cdot (g\circ R^{\circ m})\bigr)\e_z}.
    \qedhere
  \end{align*}
\end{proof}

\begin{lemma}\label{3.2.2}
  The map $(f,g)\mapsto f\cdot (g\circ R^{\circ m})$ maps $C_c(X)\times C_c(X)$ onto $C_c(X)$.
\end{lemma}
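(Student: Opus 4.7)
The plan is to prove surjectivity by an explicit factorization of an arbitrary $h\in C_c(X)$. Given $h\in C_c(X)$, let $K:=\supp h$, which is compact. Since $R$ (hence $R^{\circ m}$) is continuous on $X$, the image $R^{\circ m}(K)\subseteq X$ is compact as well. Because $X$ is locally compact Hausdorff (as a closed or open subset of the Riemann sphere), Urysohn's lemma produces a function $g\in C_c(X)$ with $g\equiv 1$ on $R^{\circ m}(K)$ and $0\leq g\leq 1$ everywhere.

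Next I would set $f:=h$ and verify that $h=f\cdot(g\circ R^{\circ m})$ pointwise. For $x\in K$, one has $R^{\circ m}(x)\in R^{\circ m}(K)$, so $g(R^{\circ m}(x))=1$, giving $f(x)g(R^{\circ m}(x))=h(x)$. For $x\notin K$, $f(x)=h(x)=0$, so the identity holds trivially. This shows $h$ lies in the image of the map.

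There is no real obstacle: the construction relies only on compactness of the image $R^{\circ m}(K)$ under a continuous map and the standard Urysohn lemma for locally compact Hausdorff spaces. The only point worth noting is that one does \emph{not} try to define $f$ as a quotient $h/(g\circ R^{\circ m})$ (which would raise division-by-zero issues off the support of $h$); instead, the freedom to set $f=h$ on all of $X$ and let $g\circ R^{\circ m}$ absorb the value $1$ on $K$ makes the construction clean.
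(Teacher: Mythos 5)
Your surjectivity argument is exactly the paper's: factor $h$ as $h\cdot(g\circ R^{\circ m})$ where $g\in C_c(X)$ is a Urysohn function equal to $1$ on the compact set $R^{\circ m}(\supp h)$. The only thing you omit is the (easy) forward inclusion that $f\cdot(g\circ R^{\circ m})$ actually lies in $C_c(X)$ for arbitrary $f,g\in C_c(X)$ --- the paper notes that continuity is clear and that $\supp\bigl(f\cdot(g\circ R^{\circ m})\bigr)\subseteq\supp f$ gives compact support --- which is worth a line since the lemma asserts the map lands in $C_c(X)$, but this is immediate and does not affect the substance of your proof.
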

\begin{proof}
  First, we show that the image of $C_c(X)\times C_c(X)$ is included in $C_c(X)$.
  The continuity of $f\cdot (g\circ R^{\circ m})$ follows immediately from the continuity of $f$ and $g$.
  The zeros of $f$ are also zeros of $f\cdot (g\circ R^{\circ m})$ and hence the support of $f$ includes the support of $f\cdot (g\circ R^{\circ m})$.
  Therefore, the compactness of the support of $f\cdot (g\circ R^{\circ m})$ follows from the compactness of the support of $f$.
  By the above, $f\cdot (g\circ R^{\circ m})$ is continuous and compactly supported for any $f,g\in C_c(X)$.
  
  Next, we show that the image of $C_c(X)\times C_c(X)$ includes $C_c(X)$.
  Take any $f\in C_c(X)$.
  The support of $f$ is compact and so is its image under $R^{\circ m}$.
  There exists a compactly supported continuous function $g$ which values $1$ on $R^{\circ m}(\supp f)$.
  The following holds for any $x\in X$:
  \begin{equation*}
    f(x)\cdot g(R^{\circ m}(x))
    =\begin{cases}
      f(x)\cdot 1&\text{if $x\in\supp f$}\\
      0\cdot g(R^{\circ m}(x))&\text{otherwise}
    \end{cases}
    =\begin{cases}
      f(x)&\text{if $x\in\supp f$}\\
      0&\text{otherwise}
    \end{cases}
    =f(x).
  \end{equation*}
  Therefore, $f=f\cdot (g\circ R^{\circ m})$ holds.
\end{proof}

\begin{lemma}\label{3.2.3}
  The map $(f,g)\mapsto f\cdot (g\circ R^{\circ m})$ maps $C_0(X)\times C_0(X)$ densely into $C_0(X)$.
\end{lemma}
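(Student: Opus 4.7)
The plan is to derive Lemma~\ref{3.2.3} as a direct corollary of Lemma~\ref{3.2.2}. First, I would note that $C_c(X)\times C_c(X)$ is a subset of $C_0(X)\times C_0(X)$, and that the map $(f,g)\mapsto f\cdot(g\circ R^{\circ m})$ appearing in Lemma~\ref{3.2.3}, when restricted to $C_c(X)\times C_c(X)$, is precisely the map studied in Lemma~\ref{3.2.2}. Since Lemma~\ref{3.2.2} shows this restriction is onto $C_c(X)$, the image of $C_0(X)\times C_0(X)$ under the present map contains $C_c(X)$ as a subset.

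Next, I would invoke the standard fact that $C_c(X)$ is uniformly dense in $C_0(X)$ whenever $X$ is a locally compact Hausdorff space. The space $X$ in our setting is the Julia set, the Fatou set, or the whole Riemann sphere, hence a closed or open subset of a compact Hausdorff space and therefore itself locally compact Hausdorff; this is already implicit in the very use of the notation $C_0(X)$ in Section~2.3. Combining these two observations immediately yields that the image of $C_0(X)\times C_0(X)$ is dense in $C_0(X)$, which is exactly the assertion of Lemma~\ref{3.2.3}.

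The main obstacle is essentially non-existent: once Lemma~\ref{3.2.2} is in hand, the present lemma reduces to a one-line density argument. The only implicit point worth verifying is the well-definedness of the map as taking values in $C_0(X)$ (rather than merely in $C_b(X)$), which follows from the pointwise bound $|f\cdot(g\circ R^{\circ m})|\leq \norm{g}_{C_0(X)}\cdot|f|$ together with $f\in C_0(X)$.
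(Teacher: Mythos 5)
Your proposal is correct and follows essentially the same route as the paper: both arguments reduce the claim to Lemma~\ref{3.2.2} by observing that the image already contains $C_c(X)$, which is dense in $C_0(X)$. The only cosmetic difference is that the paper phrases the map on $C_0(X)\times C_0(X)$ as the continuous extension of the bounded bilinear map on $C_c(X)\times C_c(X)$, whereas you work with the pointwise formula directly and check that it lands in $C_0(X)$; the substance is identical.
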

\begin{proof}
  The map in the statement of Lemma~\ref{3.2.2} is a bounded bilinear map and hence has a unique continuous extension to $C_0(X)\times C_0(X)$.
  The range of the extension includes $C_c(X)$ and therefore dense in $C_0(X)$.
\end{proof}

\begin{proposition}\label{3.2.4}
  The equation $\overline{E_mE_n}=E_{m+n}$ holds for any $m,n\in\N\sqcup\{0\}$, where $E_mE_n$ denotes the set $\SET{}{\alpha\beta}{\text{$\alpha\in E_m$ and $\beta\in E_n$}}$.
\end{proposition}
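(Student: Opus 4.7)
The plan is a direct two-sided inclusion argument: Proposition~\ref{3.2.1} identifies $E_mE_n$ with the image under $\rho_{m+n}$ of the pointwise-product set $\{f\cdot(g\circ R^{\circ m}):f,g\in C_0(X)\}$, and Lemma~\ref{3.2.3} already asserts that this set is dense in $C_0(X)$; I just need to push this density through $\rho_{m+n}$ using its bi-Lipschitz property.

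First I would handle the inclusion $\overline{E_mE_n}\subseteq E_{m+n}$. By Proposition~\ref{3.2.1}, every product $\rho_m(f)\rho_n(g)$ equals $\rho_{m+n}(f\cdot(g\circ R^{\circ m}))$, which lies in $E_{m+n}$. Because $E_{m+n}$ is a Banach subspace of $M_\X$ and therefore norm-closed, taking closures preserves this inclusion.

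For the reverse inclusion $E_{m+n}\subseteq\overline{E_mE_n}$, I would take an arbitrary $h\in C_0(X)$ and choose, via Lemma~\ref{3.2.3}, sequences $(f_k),(g_k)\subseteq C_0(X)$ with $f_k\cdot(g_k\circ R^{\circ m})\to h$ in supremum norm. The bi-Lipschitz estimate recorded with the definition of $\rho_n$ transfers this to
\begin{equation*}
  \rho_m(f_k)\rho_n(g_k)=\rho_{m+n}\bigl(f_k\cdot(g_k\circ R^{\circ m})\bigr)\longrightarrow\rho_{m+n}(h)
\end{equation*}
in operator norm, exhibiting $\rho_{m+n}(h)$ as a limit of elements of $E_mE_n$.

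I do not anticipate a genuine obstacle: once Proposition~\ref{3.2.1} and Lemma~\ref{3.2.3} are both available, the result is essentially a change of variables along the bi-Lipschitz isomorphism $\rho_{m+n}\colon C_0(X)\to E_{m+n}$. The only point to watch is keeping straight which norm (sup on $C_0(X)$ versus operator on $M_\X$) is being used at each step, which is precisely what the bi-Lipschitz clause addresses.
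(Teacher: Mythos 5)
Your proof is correct and follows essentially the same route as the paper: both establish $E_mE_n\subseteq E_{m+n}$ from Proposition~\ref{3.2.1} and closedness of $E_{m+n}$, and both obtain the reverse inclusion by pushing the density/surjectivity of $(f,g)\mapsto f\cdot(g\circ R^{\circ m})$ through the continuity of $\rho_{m+n}$. The only cosmetic difference is that the paper phrases the reverse inclusion via the exact surjectivity onto $C_c(X)$ from Lemma~\ref{3.2.2} followed by taking closures, whereas you approximate directly in $C_0(X)$ using Lemma~\ref{3.2.3}; the two are interchangeable here.
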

\begin{proof}
  By Proposition~\ref{3.2.1}, Lemma~\ref{3.2.2} and Lemma~\ref{3.2.3}, we have the following, which imply $\overline{E_mE_n}=E_{m+n}$.
  \begin{equation*}
    E_mE_n
    =\rho_m(C_0(X))\rho_n(C_0(X))
    \subseteq\rho_{m+n}(C_0(X))
    =E_{m+n}.
  \end{equation*}
  \begin{gather*}
    \overline{E_mE_n}
    \supseteq\overline{\rho_m(C_c(X))\rho_n(C_c(X))}
    =\overline{\rho_{m+n}(C_c(X))}
    \\
    =\rho_{m+n}(\overline{C_c(X)})
    =\rho_{m+n}(C_0(X))
    =E_{m+n}.
    \qedhere
  \end{gather*}
\end{proof}

\begin{corollary}\label{3.2.5}
  $E_n$'s are included in $C^*(\rho)$.
\end{corollary}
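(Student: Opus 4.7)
The plan is to prove this by induction on $n$, using Proposition~\ref{3.2.4} as the inductive engine. The statement is essentially routine once that proposition is in hand; the only thing to check is that each iterative step stays inside $C^*(\rho)$, which is a C*-subalgebra of $M_\X$ and therefore closed under multiplication and norm limits.

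First I would handle the base cases $n=0$ and $n=1$. By the very definition of $\rho_0$ and $\rho_1$ we have $E_0=\rho_A(A)$ and $E_1=\rho_E(E)$, and both $\rho_A(A)$ and $\rho_E(E)$ lie in $C^*(\rho)$ by the definition of $C^*(\rho)$ as the C*-subalgebra generated by $\rho_A(A)\cup\rho_E(E)$. So the base cases are immediate.

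For the inductive step, assume $E_n\subseteq C^*(\rho)$. By Proposition~\ref{3.2.4} applied with $m=n$ and the second index equal to $1$, we have $E_{n+1}=\overline{E_nE_1}$. Since $E_n\subseteq C^*(\rho)$ by the inductive hypothesis and $E_1\subseteq C^*(\rho)$, the product set $E_nE_1$ is contained in $C^*(\rho)$, and taking the norm closure keeps us inside $C^*(\rho)$ because it is closed in $M_\X$. Hence $E_{n+1}\subseteq C^*(\rho)$, completing the induction.

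There is no real obstacle here; the content lies entirely in Proposition~\ref{3.2.4}, which has already been established. The corollary simply packages the fact that the generators $\rho_A(A)$ and $\rho_E(E)$ of $C^*(\rho)$ suffice to recover all of the higher-level operators $\rho_n(C_0(X))$ associated with iterating the dynamical system.
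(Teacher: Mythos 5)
Your proof is correct and matches the paper's intent exactly: the corollary is stated without a written proof precisely because it follows by the induction you describe, using Proposition~\ref{3.2.4} in the form $E_{n+1}=\overline{E_nE_1}$ together with $E_0=\rho_A(A)$, $E_1=\rho_E(E)\subseteq C^*(\rho)$ and the fact that $C^*(\rho)$ is norm-closed and closed under products. Nothing is missing.
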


\subsection{$X_{m,n}$ and $\X_{m,n}$}

For each $m,n\in\N\sqcup\{0\}$, we define sets $X_{m,n}$ and $\X_{m,n}$ by the following:
\begin{align*}
  X_{m,n}&:=\SET{}{(x,y)\in X\times X}{R^{\circ m}(x)=R^{\circ n}(y)},\\
  \X_{m,n}&:=(\X\times\X)\cap X_{m,n}.
\end{align*}

\begin{lemma}\label{3.3.1}
  Let
    $\Omega$ be a set,
    $R\colon\Omega\to\Omega$ be a map, and
    $X$ be a subset of~$\Omega$.
  Suppose that $X$ is completely invariant under~$R$.
  Then, the equation $R(X\cap U)=X\cap R(U)$ holds for any subset~$U$ of~$\Omega$.
\end{lemma}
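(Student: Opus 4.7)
The plan is to prove the equality by verifying the two inclusions separately, using the two halves $R(X) = X$ and $R^{-1}(X) = X$ of the complete invariance hypothesis.

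For the inclusion $R(X \cap U) \subseteq X \cap R(U)$, I would take an arbitrary $y \in R(X \cap U)$ and write $y = R(x)$ for some $x \in X \cap U$. From $x \in U$ I would immediately get $y = R(x) \in R(U)$, and from $x \in X$ together with $R(X) \subseteq X$ (one half of complete invariance) I would get $y = R(x) \in X$. Combining these yields $y \in X \cap R(U)$.

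For the reverse inclusion $X \cap R(U) \subseteq R(X \cap U)$, I would take $y \in X \cap R(U)$ and choose $x \in U$ with $R(x) = y$. Since $R(x) = y \in X$, the other half of complete invariance $R^{-1}(X) \subseteq X$ forces $x \in X$. Therefore $x \in X \cap U$, and hence $y = R(x) \in R(X \cap U)$.

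There is no real obstacle here; the lemma is a direct set-theoretic unwinding of the definition of complete invariance, and the only thing to be careful about is that each inclusion uses a different one of the two containments $R(X) \subseteq X$ and $R^{-1}(X) \subseteq X$. The surjectivity half $R(X) = X$ (not merely $R(X) \subseteq X$) is in fact not needed for this particular statement, but the preimage half is essential in the reverse inclusion to lift $y \in X$ back to some $x \in X$.
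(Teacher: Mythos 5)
Your proof is correct and follows essentially the same route as the paper: the forward inclusion via $R(X\cap U)\subseteq R(X)\cap R(U)$ together with $R(X)\subseteq X$, and the reverse inclusion by lifting $y\in X\cap R(U)$ to a preimage $x\in U$ and using $R^{-1}(X)\subseteq X$ to conclude $x\in X$. Your side remark that only the two containments (not the full equalities) are needed is accurate but does not change the argument.
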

\begin{proof}
  The inclusion~``$\subseteq$'' follows from
  \begin{equation*}
    R(X\cap U)
    \subseteq R(X)\cap R(U)
    =X\cap R(U).
  \end{equation*}
  We show the inverse inclusion.
  Take any element~$y$ in $X\cap R(U)$.
  There exists $x\in U$ such that $y=R(x)$.
  We have
  \begin{equation*}
    x\in R^{-1}(y)\subseteq R^{-1}(X)=X.
  \end{equation*}
  So $x$ is in $X\cap U$ and $y$ is in $R(X\cap U)$.
  Therefore, $X\cap R(U)\subseteq R(X\cap U)$ holds.
\end{proof}

\begin{proposition}\label{3.3.2}
  $R$ is open as a map from~$X$ to itself, i.e. the image of every open set of~$X$ under~$R$ is open in~$X$.
\end{proposition}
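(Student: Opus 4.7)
The plan is to combine three ingredients that are already available: (a) openness of $R$ as a map on the Riemann sphere (Fact~\ref{2.2.1.3}), (b) complete invariance of $X$ under $R$ (Fact~\ref{2.2.3.1} in the Julia set case, and trivially in the other two cases), and (c) the set-theoretic identity $R(X\cap U)=X\cap R(U)$ for completely invariant $X$, which is precisely Lemma~\ref{3.3.1}.

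First I would let $\Omega$ denote the Riemann sphere, so that $R\colon\Omega\to\Omega$ is a non-constant holomorphic map between Riemann surfaces and $X\subseteq\Omega$ is completely invariant. Then I would take an arbitrary open subset $V$ of~$X$ and, using the definition of the subspace topology, write $V=X\cap U$ for some open set $U\subseteq\Omega$. By Fact~\ref{2.2.1.3}, the image $R(U)$ is open in~$\Omega$. Applying Lemma~\ref{3.3.1} to $X$ and $U$ gives
\begin{equation*}
  R(V)=R(X\cap U)=X\cap R(U),
\end{equation*}
which is the intersection of $X$ with an open subset of~$\Omega$ and therefore open in~$X$.

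There is no real obstacle here; the proposition is essentially a routine transfer of the classical openness of holomorphic maps (Fact~\ref{2.2.1.3}) from the Riemann sphere to the completely invariant subset~$X$, with Lemma~\ref{3.3.1} doing exactly the set-theoretic bookkeeping needed to stay inside~$X$. The only point worth flagging is the need to invoke complete invariance of $X$, which in the Julia set case is Fact~\ref{2.2.3.1} and in the Fatou set and Riemann sphere cases is immediate.
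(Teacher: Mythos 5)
Your proof is correct and follows the same route as the paper: write an open set of $X$ as $X\cap U$ for $U$ open in the Riemann sphere, apply the openness of holomorphic maps (Fact~\ref{2.2.1.3}) to $U$, and use Lemma~\ref{3.3.1} with the complete invariance of $X$ to conclude $R(X\cap U)=X\cap R(U)$. Your extra remark about where complete invariance comes from is a sensible clarification but not a difference in method.
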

\begin{proof}
  We recall that $R$ is open as a map from the Riemann sphere to itself by Fact~\ref{2.2.1.3}.
  The open set of~$X$ can be described as $X\cap U$ for some open set~$U$ of the Riemann sphere.
  By Lemma~\ref{3.3.1}, its image under~$R$ coincides with $X\cap R(U)$, which is open in~$X$.
\end{proof}

\begin{proposition}\label{3.3.3}
  The spaces~$X_{m,n}$ are non-empty, locally compact, and perfect.
\end{proposition}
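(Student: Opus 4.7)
The plan is to prove the three properties separately, with perfection being the substantive step.

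For non-emptyness, I would argue that for any $m,n$, the iterate $R^{\circ m}\colon X\to X$ is surjective (since $X$ is completely invariant under $R$ and $R$ is surjective on the Riemann sphere). Pick any $z\in X$ (non-empty by hypothesis); surjectivity of $R^{\circ m}$ and $R^{\circ n}$ on $X$ gives $x,y\in X$ with $R^{\circ m}(x)=z=R^{\circ n}(y)$, so $(x,y)\in X_{m,n}$.

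For local compactness, $X$ is locally compact Hausdorff (as an open or closed subset of the Riemann sphere), hence so is $X\times X$. The set $X_{m,n}$ is closed in $X\times X$ because it is the preimage of the diagonal of $X\times X$ under the continuous map $(x,y)\mapsto\bigl(R^{\circ m}(x),R^{\circ n}(y)\bigr)$. A closed subset of a locally compact Hausdorff space is locally compact.

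The main work is to show $X_{m,n}$ is perfect. Fix $(x_0,y_0)\in X_{m,n}$, set $z_0:=R^{\circ m}(x_0)=R^{\circ n}(y_0)$, and take an arbitrary basic open neighborhood $U\times V$ of $(x_0,y_0)$ in $X\times X$. My goal is to produce another point of $X_{m,n}$ inside $U\times V$. By Proposition~\ref{3.3.2} applied to $R^{\circ m}$ and $R^{\circ n}$ (which are also rational with $\deg\geq 2$, and for which $X$ is completely invariant), the sets $R^{\circ m}(U)$ and $R^{\circ n}(V)$ are open neighborhoods of $z_0$ in $X$. Their intersection $W$ is an open neighborhood of $z_0$ in $X$. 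Since $X$ is perfect (Fact~\ref{2.2.3.1}, or trivially when $X$ is the whole sphere or the Fatou set which is manifold-like), $W$ contains a point $z\neq z_0$. Choose preimages $x\in U$ with $R^{\circ m}(x)=z$ and $y\in V$ with $R^{\circ n}(y)=z$. Then $(x,y)\in X_{m,n}\cap(U\times V)$; moreover $R^{\circ m}(x)=z\neq z_0=R^{\circ m}(x_0)$ forces $x\neq x_0$, so $(x,y)\neq(x_0,y_0)$.

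The main obstacle I anticipate is the case when $X$ is the Fatou set, since Fact~\ref{2.2.3.1} only asserts perfectness of the Julia set; however, the Fatou set (when non-empty) is open in the Riemann sphere and contains no isolated points (open subsets of a manifold are perfect), and the Riemann sphere itself is obviously perfect, so perfectness of $X$ holds in all three cases. A minor technical wrinkle is that the neighborhoods $W\subseteq R^{\circ m}(U)$ and $W\subseteq R^{\circ n}(V)$ must be chosen inside $X$, not in the ambient sphere — this is handled by Lemma~\ref{3.3.1}, which guarantees the images are genuine open subsets of $X$.
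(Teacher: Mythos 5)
Your proof is correct and follows essentially the same route as the paper's: complete invariance gives non-emptiness, closedness in $X\times X$ gives local compactness, and openness of $R^{\circ m}$, $R^{\circ n}$ on $X$ (Proposition~\ref{3.3.2}) combined with perfectness of $X$ gives perfectness of $X_{m,n}$. Your extra care about the Fatou-set case and about images being open in $X$ rather than in the sphere matches what the paper already builds into Lemma~\ref{3.3.1} and its standing assumptions on $X$.
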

\begin{proof}
  We recall that $X$ is not empty.
  So we can take a point~$z$ in~$X$.
  Since $X$ is completely invariant under~$R$, there exists $x,y\in X$ such that $R^{\circ m}(x)=R^{\circ n}(y)=z$, which implies $(x,y)\in X_{m,n}$.
  Im particular, $X_{m,n}$ is not empty.
  
  Since $X_{m,n}$ coincides the preimage of the diagonal set of~$X\times X$ under the continuous map $(x,y)\mapsto (R^{\circ m}(x),R^{\circ n}(y))$, it is closed in~$X\times X$ and hence locally compact.
  
  We show the perfectness of~$X_{m,n}$.
  Take any point~$(x,y)$ in~$X_{m,n}$ and any open neighborhoods $U$ and $V$ of $x$ and $y$ in~$X$, respectively.
  Let $z:=R^{\circ m}(x)=R^{\circ n}(y)$.
  By Proposition~\ref{3.3.2}, $R^{\circ m}(U)$ and $R^{\circ n}(V)$ are open neighborhoods of~$z$ and so is their intersection~$W$.
  Since $X$ is perfect, $W$ has a point~$w$ distinct from~$z$.
  There exists $u\in U$ and $v\in V$ such that $w=R^{\circ m}(u)=R^{\circ n}(v)$, which implies $u\neq x$, $v\neq y$ and $(u,v)\in X_{m,n}$.
  Therefore, the basic open neighborhood $U\times V$ of~$(x,y)$ has the point~$(u,v)$ distinct from~$(x,y)$.
  By the above, every point~$(x,y)$ in~$X_{m,n}$ is not isolated and hence $X_{m,n}$ is perfect.
\end{proof}

\begin{proposition}\label{3.3.4}
  $\X_{m,n}$ is dense in~$X_{m,n}$ for any $m,n\in\N\sqcup\{0\}$.
\end{proposition}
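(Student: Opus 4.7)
The plan is to reduce density in $X_{m,n}$ (with the subspace topology from $X \times X$) to a local lifting statement: given any basic open neighborhood $U \times V$ of a point $(x,y) \in X_{m,n}$, I need to produce a point $(u,v) \in \X_{m,n}$ with $u \in U$ and $v \in V$.

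First I would set $z := R^{\circ m}(x) = R^{\circ n}(y)$ and apply Proposition~\ref{3.3.2} iterated: since $R$ is open on $X$, so is every $R^{\circ k}$, and therefore $R^{\circ m}(U)$ and $R^{\circ n}(V)$ are open neighborhoods of $z$ in $X$. Their intersection $W$ is then a non-empty open subset of $X$. Because $\X$ was defined as the complement of a countable orbit and shown to be dense in $X$ (via Baire), I may pick some $w \in W \cap \X$.

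Next I would invoke complete invariance of $\X$ under $R$, which upgrades by induction to complete invariance under every iterate $R^{\circ k}$; in particular $(R^{\circ m})^{-1}(\X) \subseteq \X$. Since $w \in R^{\circ m}(U)$, there is $u \in U$ with $R^{\circ m}(u) = w$, and complete invariance forces $u \in \X$. Symmetrically one obtains $v \in V \cap \X$ with $R^{\circ n}(v) = w$. Then $(u,v) \in \X_{m,n}$ and $(u,v) \in (U \times V) \cap X_{m,n}$, giving the required point in the given neighborhood.

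I do not anticipate a serious obstacle; the only subtlety is noticing that one cannot just take $u$ and $v$ to be arbitrary preimages of a dense set of base points, but must first pick the base point $w$ inside $\X$ and then use complete invariance of $\X$ to drag the chosen preimages into $\X$ automatically. Openness of all iterates $R^{\circ k}$ on $X$ (rather than only on the Riemann sphere) is what makes $W$ genuinely a neighborhood of $z$ in $X$, which is why Proposition~\ref{3.3.2} is essential here.
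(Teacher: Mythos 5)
Your proposal is correct and follows essentially the same route as the paper's own proof: push the neighborhoods $U$ and $V$ forward under the open maps $R^{\circ m}$ and $R^{\circ n}$, pick a point $w$ of the dense set $\X$ in the intersection $W$, and pull back using complete invariance of $\X$ to land $(u,v)$ in $\X_{m,n}\cap(U\times V)$. The subtlety you flag (choosing the base point $w$ in $\X$ first, rather than the preimages) is exactly the mechanism the paper uses as well.
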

\begin{proof}
  Let $m,n\in\N\sqcup\{0\}$.
  Take any point~$(x,y)$ in~$X_{m,n}$.
  Let $z:=R^{\circ m}(x)=R^{\circ n}(y)$.
  
  Take any open neighborhoods $U$ and $V$ of $x$ and $y$ in~$X$, respectively.
  By Proposition~\ref{3.3.2}, $R^{\circ m}(U)$ and $R^{\circ n}(V)$ are open neighborhoods of~$z$ and so is their intersection~$W$.
  By the denseness of $\X$, we can take a point~$w$ in~$\X\cap W$.
  There exist $u\in U$ and $v\in V$ such that $w=R^{\circ m}(u)=R^{\circ n}(v)$, which implies $(u,v)\in X_{m,n}$ and $u,v\in\X$.
  Therefore, the basic open neighborhood~$U\times V$ of~$(x,y)$ meets $\X_{m,n}$ at~$(u,v)$.

  By the above, every point~$(x,y)$ in~$X_{m,n}$ is an adherent point of~$\X_{m,n}$ and hence $\X_{m,n}$ is dense in~$X_{m,n}$.
\end{proof}

\begin{proposition}\label{3.3.5}
  Let $m,n,j,k\in\N\sqcup\{0\}$.
  
  (1) If $m-n\neq j-k$, then $\X_{m,n}$ and $\X_{j,k}$ are disjoint.
  
  (2) If $m-n=j-k$ and $m\leq j$, then $X_{m,n}\subseteq X_{j,k}$ and $\X_{m,n}\subseteq\X_{j,k}$ hold.
\end{proposition}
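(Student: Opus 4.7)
\medskip
\noindent
\textbf{Proof proposal for Proposition~\ref{3.3.5}.}
The plan is to handle part~(2) first as a quick warm-up, then derive part~(1) by applying suitable iterates of $R$ to produce a periodic point in the orbit of a point of $\X$, which will contradict the definition of $\X$.

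For part~(2), set $d := j-m = k-n \geq 0$. Given $(x,y) \in X_{m,n}$, we have $R^{\circ m}(x) = R^{\circ n}(y)$, and applying $R^{\circ d}$ to both sides yields $R^{\circ j}(x) = R^{\circ k}(y)$, so $(x,y) \in X_{j,k}$. The argument restricts verbatim to $\X \times \X$, giving $\X_{m,n} \subseteq \X_{j,k}$.

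For part~(1), suppose for contradiction that $(x,y) \in \X_{m,n} \cap \X_{j,k}$. By symmetry between the two pairs we may assume $m-n > j-k$, i.e.\ $m+k > j+n$. From the two defining equations $R^{\circ m}(x) = R^{\circ n}(y)$ and $R^{\circ j}(x) = R^{\circ k}(y)$, apply $R^{\circ k}$ to the first and $R^{\circ n}$ to the second to get
\begin{equation*}
  R^{\circ(m+k)}(x) = R^{\circ(n+k)}(y) = R^{\circ(j+n)}(x).
\end{equation*}
Setting $z := R^{\circ(j+n)}(x)$ and $p := m+k-j-n > 0$, this reads $R^{\circ p}(z) = z$, so $z$ is a fixed point of $R^{\circ p}$, hence belongs to the orbit (under $\sim$) of the fixed points of iterations of $R$, which is exactly the complement of $\X$ by definition. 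On the other hand $z \sim x$, and since $\X$ is completely invariant under $R$ it is a union of $\sim$-classes, so $x \in \X$ forces $z \in \X$. This is the required contradiction.

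The only delicate point is the last step---recognizing that $\X$, being the complement of the full $\sim$-orbit of the set of fixed and branch points of all iterates of $R$, is closed under the equivalence relation $\sim$, so that membership of $x$ in $\X$ transfers to every point in its forward orbit. This is essentially built into the definition of $\X$ given at the start of Section~3.1, so no new work is required; the rest of the proof is the bookkeeping above.
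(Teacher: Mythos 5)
Your proposal is correct and follows essentially the same route as the paper: part (2) by applying $R^{\circ(j-m)}$ to the defining equation, and part (1) by deriving $R^{\circ(m+k)}(x)=R^{\circ(j+n)}(x)$ and concluding that $x$ would then lie in the excluded orbit of periodic points. Your write-up actually makes explicit the final step (that the forward image $z$ is a fixed point of an iterate and that $\X$ is $\sim$-saturated), which the paper leaves implicit.
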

\begin{proof}\ \par
\noindent
(1)
Assume that $m-n\neq j-k$ and $\X_{m,n}\cap\X_{j,k}\neq\emptyset$.
Then, we can take a point~$(x,y)$ belonging to both $\X_{m,n}$ and $\X_{j,k}$.
We note that $x$ belongs to~$\X$.
\begin{gather*}
  R^{\circ (m+k)}(x)
  =R^{\circ k}(R^{\circ m}(x))
  =R^{\circ k}(R^{\circ n}(y))\\
  =R^{\circ n}(R^{\circ k}(y))
  =R^{\circ n}(R^{\circ j}(x))
  =R^{\circ (n+j)}(x).
\end{gather*}
The assumption implies $m+k\neq n+j$ and hence $x$ does not belong to~$\X$.
This leads to the contradiction.
Therefore, $m-n\neq j-k$ implies $\X_{m,n}\cap\X_{j,k}=\emptyset$.

\noindent
(2)
Assume that $m-n=j-k$.
Then, for any point~$(x,y)$ in~$X_{m,n}$, we have
\begin{equation*}
  R^{\circ j}(x)
  =R^{\circ (j-m)}(R^{\circ m}(x))
  =R^{\circ (j-m)}(R^{\circ n}(y))
  =R^{\circ (j-m+n)}(y)
  =R^{\circ k}(y),
\end{equation*}
and hence $(x,y)$ belongs to~$X_{j,k}$.
Therefore, $X_{m,n}\subseteq X_{j,k}$ holds.
Similarly, we can show that $\X_{m,n}\subseteq\X_{j,k}$.
\end{proof}

\subsection{$\rho_{m,n}$}\ \par
\begin{lemma}\label{3.4.1}
  Let $m,n\in\N$ with~$m\leq n$ and $\alpha$ be a complex $m\times n$ matrix.
  Then, the inequality \begin{equation*}(\text{the operator norm of $\alpha$})\leq n\times(\text{the max norm of $\alpha$})\end{equation*} holds.
  Here, the \emph{max norm} of a matrix is the maximum of the absolute values of entries of the matrix.
\end{lemma}
\begin{proof}
  An $m\times n$ matrix~$\alpha$ can be regarded as a $\Z/m\Z\times\Z/n\Z$ matrix.
  For each $k=1$, ..., $n$, let $\beta_k$ be the $\Z/m\Z\times\Z/n\Z$ matrix whose $(i,j)$-entry coincides with the $(i,j)$-entry of $\alpha$ if $i=j+k$ and otherwise vanishes.
  Each~$\beta_k$ is quasi-monomial and hence its operator norm is equal to its max norm, which is less than or equal to the max norm of~$\alpha$.
  Therefore, the operator norm of~$\alpha$, which can be described as the sum of~$\beta_k$'s, is less than or equal to~$n$ times by the max norm of~$\alpha$.
\end{proof}

\begin{proposition}\label{3.4.2}
  Let $m,n\in\N\sqcup\{0\}$ and let $f$ be a bounded function on~$\X_{m,n}$.
  Then, there exists a unique $\X$-squared matrix~$\alpha$ whose $(x,y)$-entry values $f(x,y)$ if $(x,y)\in\X_{m,n}$ and otherwise zero.
  Moreover, the following inequality holds:
  \begin{equation*}
    \sup f
    \leq (\text{the operator norm of $\alpha$})
    \leq d\times\sup f,
  \end{equation*}
  where we let $d:=\max(\deg R^{\circ m},\deg R^{\circ n})$.
\end{proposition}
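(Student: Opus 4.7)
The plan is as follows. Uniqueness is immediate from Proposition~\ref{2.1.4.1}: the difference of any two matrices realizing the prescribed entries has all entries zero and so vanishes. The lower bound $\sup f\le\norm\alpha$ is also immediate, since for each $(x,y)\in\X_{m,n}$ one has $|f(x,y)|=|\inn{\e_x}{\alpha\e_y}|\le\norm\alpha$ (because $\e_x,\e_y$ are unit vectors), and taking the supremum gives the claim.

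The substantive part is existence together with the upper bound $\norm\alpha\le d\sup f$. The key combinatorial observation I would exploit is that $\X_{m,n}$ has bounded fibers on both sides: for fixed $y\in\X$ the set $\{x\in\X:(x,y)\in\X_{m,n}\}$ sits inside $(R^{\circ m})^{-1}(R^{\circ n}(y))$ and so has cardinality at most~$\deg R^{\circ m}$, and symmetrically each row has at most~$\deg R^{\circ n}$ non-zero entries. First I would define~$\alpha$ on the dense subspace of finitely supported vectors in~$\H_\X$ by $\alpha\e_y:=\sum_{x:(x,y)\in\X_{m,n}}\e_x f(x,y)$ (a finite sum, by the column bound). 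For $\xi=\sum_y c_y\e_y$ I would then apply Cauchy--Schwarz to the inner sum defining the $x$-component of $\alpha\xi$ (using the row bound) and sum over~$x$ with an interchange of order of summation (using the column bound), yielding
\begin{equation*}
  \norm{\alpha\xi}^2\le(\deg R^{\circ m})(\deg R^{\circ n})(\sup f)^2\norm\xi^2\le d^2(\sup f)^2\norm\xi^2.
\end{equation*}
Extending by continuity produces the desired bounded operator $\alpha\in M_\X$ with $\norm\alpha\le d\sup f$.

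The hardest point is simply the leap from formal matrix entries to an honest bounded operator on the infinite-dimensional space~$\H_\X$; once the two fiber-size bounds are in hand, the required boundedness is an entirely routine Cauchy--Schwarz computation. As an alternative more in the spirit of Lemma~\ref{3.4.1}, one could decompose~$\alpha$ into at most~$d$ quasi-monomial summands by König-coloring the bipartite edge set~$\X_{m,n}$ (which has maximum degree at most~$d$) and noting that quasi-monomial matrices have operator norm equal to the supremum of the moduli of their entries; the Cauchy--Schwarz approach, however, is cleaner and self-contained, so I would favor it.
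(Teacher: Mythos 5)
Your argument is correct, and on the one substantive point (boundedness) it takes a genuinely different route from the paper's. The paper exploits the fact that $\alpha$ is ``block-diagonal'': the domain splits into the finite fibers $(R^{\circ n})^{-1}(z)$ and the codomain into the fibers $(R^{\circ m})^{-1}(z)$, and $\alpha$ couples a given column fiber only to the row fiber over the same $z\in\X$. Each block $\alpha_z$ is then an honest $\deg R^{\circ m}\times\deg R^{\circ n}$ matrix whose operator norm is controlled by the auxiliary Lemma~\ref{3.4.1} (itself proved by splitting a finite matrix into quasi-monomial diagonals, much as in the K\H{o}nig-coloring alternative you sketch), and the global bound $\norm{\alpha}\leq d\sup f$ follows by Cauchy--Schwarz over the index $z$. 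You instead run a Schur-type test directly on the whole matrix, using only the two fiber-cardinality bounds (at most $\deg R^{\circ n}$ nonzero entries per row and at most $\deg R^{\circ m}$ per column); this bypasses Lemma~\ref{3.4.1} entirely and even yields the slightly sharper constant $\sqrt{(\deg R^{\circ m})(\deg R^{\circ n})}\leq d$. What the paper's decomposition buys is a structural picture of $\alpha$ as a direct sum of uniformly bounded finite matrices; what your version buys is brevity and self-containment. Your uniqueness and lower-bound steps coincide with the paper's.
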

In this proposition, $f$ need not be continuous.
\begin{proof}
  The uniqueness follows immediately from Proposition~\ref{2.1.4.1}.
  
  Let $\alpha$ be the the linear map from the algebraic linear span of the~$\e_x$ to~$\H_\X$ whose $(x,y)$-entry $\inn{\e_x}{\alpha\e_y}$ values $f(x,y)$ if $(x,y)\in\X_{m,n}$ and otherwise zero.
  We will show the boundedness of~$\alpha$.
  
  We recall that $\X$ is completely invariant under~$R$ and has no branch points of the iterations of~$R$.
  Therefore, the preimage of any point~$z\in\X$ under~$R^{\circ m}$, denoted by $R^{-m}(z)$, is included in~$\X$ and the number of its elements is $\deg R^{\circ m}$.
  The similar statement for~$n$ also holds.
  For each $z\in\X$, let $\alpha_z$ be the $R^{-m}(z)\times R^{-n}(z)$ matrix whose $(x,y)$-entry coincides with the $(x,y)$-entry of~$\alpha$.
  The size of~$\alpha_z$ is $\deg R^{\circ m}\times\deg R^{\circ n}$ and hence~$\alpha_z$ is a usual matrix.
  By Lemma~\ref{3.4.1}, the next inequality holds:
  \begin{equation*}
    (\text{the operator norm of $\alpha_z$})
    \leq d\times (\text{the max norm of $\alpha_z$})
    \leq d\times\sup f.
  \end{equation*}
  
  Take any finite linear combination $\boldsymbol{u}$ and $\boldsymbol{v}$ of the~$\e_x$.
  For each $z\in\X$, we define vectors $\boldsymbol{u}_z$ and $\boldsymbol{v}_z$ by
  \begin{equation*}
    \boldsymbol{u}_z
    :=\stacksum{x\in\X}{R^{\circ m}(x)=z}\hspace{-1em}\e_x\inn{\e_x}{\boldsymbol{u}}
    \quad\text{and}\quad
    \boldsymbol{v}_z
    :=\stacksum{y\in\X}{z=R^{\circ n}(y)}\hspace{-1em}\e_x\inn{\e_x}{\boldsymbol{v}}.
  \end{equation*}
  Then, we have the following.
  \begin{align*}
    \inn{\boldsymbol{u}}{\alpha\boldsymbol{v}}
    &=\sum_{x\in\X}\sum_{y\in\X}\inn{\boldsymbol{u}}{\e_x}\inn{\e_x}{\alpha\e_y}\inn{\e_y}{\boldsymbol{v}}\\
    &=\sum_{z\in\X}\hspace{-0.5em}\stacksum{(x,y)\in\X_{m,n}}{R^{\circ m}(x)=z=R^{\circ n}(y)}\hspace{-2em}\inn{\boldsymbol{u}}{\e_x}\inn{\e_x}{\alpha\e_y}\inn{\e_y}{\boldsymbol{v}}\\
    &=\sum_{z\in\X}\stacksum{x\in R^{-m}(z)}{y\in R^{-n}(z)}\hspace{-1em}\inn{\boldsymbol{u}_z}{\e_x}\inn{\e_x}{\alpha_z\e_y}\inn{\e_y}{\boldsymbol{v}_z}\\
    &=\sum_{z\in\X}\inn{\boldsymbol{u}_z}{\alpha_z\boldsymbol{v}_z}.
  \end{align*}
  \begin{gather*}
    \abs{\inn{\boldsymbol{u}}{\alpha\boldsymbol{v}}}
    \leq\sum_{z\in\X}\abs{\inn{\boldsymbol{u}_z}{\alpha_z\boldsymbol{v}_z}}
    \leq\sum_{z\in\X}\norm{\boldsymbol{u}_z}\norm{\alpha_z}\norm{\boldsymbol{v}_z}
    \leq d\times\sup f\times\sum_{z\in\X}\norm{\boldsymbol{u}_z}\norm{\boldsymbol{v}_z}
    \\
    \leq d\times\sup f\times
      \biggl(\sum_{z\in\X}\norm{\boldsymbol{u}_z}^2\biggr)^{1/2}
      \biggl(\sum_{z\in\X}\norm{\boldsymbol{v}_z}^2\biggr)^{1/2}
    =d\times\sup f\times\norm{\boldsymbol{u}}\norm{\boldsymbol{v}}.
  \end{gather*}
  Therefore, the linear operator~$\alpha$ is bounded and hence has a continuous extension to~$\H_\X$.
  The extension is an $\X$-squared matrix with operator norm $\leq d\times\sup f$ and its $(x,y)$-entry values $f(x,y)$ if $\X_{m,n}$ and otherwise zero.
  
  All that remains to show is that the operator norm of~$\alpha$ is greater than or equal to~$\sup f$.
  For any $(x,y)\in\X_{m,n}$, we have
  \begin{equation*}
    f(x,y)
    =\inn{\e_x}{\alpha\e_y}
    \leq\norm{\e_x}\norm{\alpha}\norm{\e_y}
    =\norm{\alpha}.
  \end{equation*}
  Therefore, the inequality $\sup f\leq\norm{\alpha}$ holds.
\end{proof}

Let $m,n\in\N\sqcup\{0\}$.
By Proposition~\ref{3.4.2}, we can define a bounded linear operator~$\rho_{m,n}$ from~$C_0(X_{m,n})$ to~$M_\X$ which maps $f$ to the $\X$-squared matrix whose $(x,y)$-entry values $f(x,y)$ if $(x,y)\in\X_{m,n}$ and otherwise zero.
This operator~$\rho_{m,n}$ is bi-Lipschitz.

\begin{proposition}\label{3.4.3}
  Let $f,g\in C_0(X)$.
  Then, the function on~$X_{m,n}$ which maps $(x,y)$ to $f(x)\overline{g(y)}$ belongs to~$C_0(X_{m,n})$.
\end{proposition}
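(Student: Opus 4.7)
The plan is to split the verification into the two defining properties of a $C_0$-function on $X_{m,n}$: continuity and vanishing at infinity. Both follow from standard arguments once we recognize the function as the restriction to $X_{m,n}$ of an element of $C_0(X\times X)$.

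First I would define $h\colon X\times X\to\C$ by $h(x,y):=f(x)\overline{g(y)}$ and argue that $h\in C_0(X\times X)$. Continuity is immediate from the continuity of $f$ and $g$ together with the continuity of multiplication and complex conjugation. For the vanishing-at-infinity property, fix $\varepsilon>0$; the trivial case $f=0$ or $g=0$ aside, I would choose compact sets $K_f,K_g\subseteq X$ such that $|f(x)|<\varepsilon/\norm{g}_\infty$ for $x\notin K_f$ and $|g(y)|<\varepsilon/\norm{f}_\infty$ for $y\notin K_g$. Then outside the compact set $K_f\times K_g$ one has $|h(x,y)|<\varepsilon$, showing $h\in C_0(X\times X)$.

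Next I would invoke the fact, noted in the proof of Proposition~\ref{3.3.3}, that $X_{m,n}$ is closed in $X\times X$ (as the preimage of the diagonal under $(x,y)\mapsto(R^{\circ m}(x),R^{\circ n}(y))$). Restricting a $C_0$-function on a locally compact Hausdorff space to a closed subset yields a $C_0$-function on the subset: continuity is preserved by restriction, and for $\varepsilon>0$ the set $\{(x,y)\in X_{m,n}:|h(x,y)|\geq\varepsilon\}$ is the intersection of the closed subset $X_{m,n}$ with the compact set $\{(x,y)\in X\times X:|h(x,y)|\geq\varepsilon\}$, hence compact in $X_{m,n}$.

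There is no real obstacle here: the statement is essentially the observation that the external tensor product $f\otimes\bar g$ lies in $C_0(X\times X)$, combined with closedness of $X_{m,n}$. The only mild care needed is in the vanishing-at-infinity estimate when one of $f,g$ has zero norm, which is handled trivially.
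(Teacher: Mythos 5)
Your proof is correct, but it takes a genuinely different route from the paper's. The paper treats the assignment $(f,g)\mapsto\bigl((x,y)\mapsto f(x)\overline{g(y)}\bigr)$ as a bounded bilinear operator from $C(X)\times C(X)$ to $C(X_{m,n})$, observes that it carries a pair of compactly supported functions to a function supported in the compact set $\supp f\times\supp g$, and then passes from $C_c(X)\times C_c(X)\to C_c(X_{m,n})$ to $C_0(X)\times C_0(X)\to C_0(X_{m,n})$ by density of $C_c$ and continuity of the operator. You instead verify directly, with an explicit $\varepsilon$-estimate, that $f\otimes\bar g$ lies in $C_0(X\times X)$, and then restrict to $X_{m,n}$, using its closedness in $X\times X$ (established in the proof of Proposition~\ref{3.3.3}). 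Your argument is more elementary and self-contained, avoiding the density step entirely, whereas the paper's fits the pattern it uses repeatedly elsewhere (e.g.\ Lemmas~\ref{3.2.2} and~\ref{3.2.3}) of first handling $C_c$ and then closing up. Note that both arguments ultimately lean on the closedness of $X_{m,n}$: you explicitly, when intersecting $\SET{}{(x,y)}{\abs{h(x,y)}\geq\varepsilon}$ with $X_{m,n}$ to get a compact set; the paper implicitly, since the support inside $X_{m,n}$ is compact only because it is a closed subset of the compact set $\supp f\times\supp g$. Your attention to the degenerate case $f=0$ or $g=0$ in the $\varepsilon$-estimate is the one point of care your route requires, and you handle it correctly.
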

\begin{proof}
  Consider the bounded bi-linear operator from~$C(X)\times C(X)$ to~$C(X_{m,n})$ which maps the pair $(f,g)$ to the function which maps $(x,y)$ to $f(x)\overline{g(y)}$.
  This operator maps the pair~$(f,g)$ of compactly supported functions to a function supported by the compact set~$\supp f\times\supp g$.
  Therefore, it maps $C_c(X)\times C_c(X)$ into~$C_c(X_{m,n})$ and hence $C_0(X)\times C_0(X)$ into~$C_0(X_{m,n})$.
\end{proof}

\begin{proposition}\label{3.4.4}
  Let $f,g\in C_0(X)$ and let $h$ be the function on~$X_{m,n}$ which maps $(x,y)$ to $f(x)\overline{g(y)}$.
  Then, the equation $\rho_m(f)\rho_n(g)^*=\rho_{m,n}(h)$ holds.
\end{proposition}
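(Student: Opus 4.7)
The plan is to verify the equation by computing entries of both sides and invoking the fact that an $\X$-squared matrix is determined by its entries (Proposition~\ref{2.1.4.1}).

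First I will compute the $(x,y)$-entry of $\rho_m(f)\rho_n(g)^*$ for arbitrary $x,y\in\X$. Inserting a resolution of the identity between the two factors, as in the proof of Proposition~\ref{2.1.2.4}(3), gives
\begin{equation*}
  \inn{\e_x}{\rho_m(f)\rho_n(g)^*\e_y}
  =\sum_{z\in\X}\inn{\e_x}{\rho_m(f)\e_z}\,\overline{\inn{\e_y}{\rho_n(g)\e_z}}.
\end{equation*}
By the explicit entry formula for $\rho_m$ and $\rho_n$ recorded just after their definitions in Section~3.2, the factor $\inn{\e_x}{\rho_m(f)\e_z}$ is $f(x)$ when $R^{\circ m}(x)=z$ and $0$ otherwise, and $\inn{\e_y}{\rho_n(g)\e_z}$ is $g(y)$ when $R^{\circ n}(y)=z$ and $0$ otherwise.

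Hence the only term in the sum that can possibly be non-zero is the one indexed by $z=R^{\circ m}(x)$, and it contributes $f(x)\overline{g(y)}$ precisely when $R^{\circ m}(x)=R^{\circ n}(y)$, i.e.\ when $(x,y)\in\X_{m,n}$. Consequently
\begin{equation*}
  \inn{\e_x}{\rho_m(f)\rho_n(g)^*\e_y}
  =\begin{cases}
    f(x)\overline{g(y)} & \text{if $(x,y)\in\X_{m,n}$,}\\
    0 & \text{otherwise,}
  \end{cases}
\end{equation*}
which is exactly the $(x,y)$-entry of $\rho_{m,n}(h)$ by the definition of $\rho_{m,n}$.

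Since the two $\X$-squared matrices $\rho_m(f)\rho_n(g)^*$ and $\rho_{m,n}(h)$ agree on every entry, their difference has no non-zero entries, so it vanishes by Proposition~\ref{2.1.4.1}. There is no serious obstacle here: the argument is essentially bookkeeping, and the only thing to be a little careful about is that we are working in $\H_\X$ rather than $\H_X$, so the intermediate index $z$ must be summed over $\X$ rather than all of $X$ — but this is fine because both $\rho_m(f)$ and $\rho_n(g)$ are defined as $\X$-squared matrices to begin with.
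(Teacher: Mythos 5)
Your proof is correct and follows essentially the same route as the paper's: insert a resolution of the identity, compute the $(x,y)$-entry of $\rho_m(f)\rho_n(g)^*$ explicitly, observe it matches the entry of $\rho_{m,n}(h)$, and conclude that the two matrices coincide because they agree entrywise. The only cosmetic difference is that you cite Proposition~\ref{2.1.4.1} explicitly for the final step, which the paper leaves implicit.
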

\begin{proof}
  We have the following for any $x,y\in\X$, which is followed by this proposition.
  \begin{align*}
    \inn{\e_x}{\rho_m(f)\rho_n(g)^*\e_y}
    &=\inn{\e_x}{\rho_m(f)\sum_{z\in\X}\e_z\inn{\e_z}{\rho_n(g)^*\e_y}}\\
    &=\sum_{z\in\X}\inn{\e_x}{\rho_m(f)\e_z\inn{\e_z}{\rho_n(g)^*\e_y}}\\
    &=\sum_{z\in\X}\inn{\e_x}{\rho_m(f)\e_z}\inn{\e_z}{\rho_n(g)^*\e_y}\\
    &=\sum_{z\in\X}\inn{\e_x}{\rho_m(f)\e_z}\overline{\inn{\e_y}{\rho_n(g)\e_z}}\\
    &=f(x)\overline{\inn{\e_y}{\rho_n(g)\e_{R^{\circ m}(x)}}}\\
    &=\begin{cases}
      f(x)\overline{g(y)}&\text{if $R^{\circ m}(x)=R^{\circ n}(y)$}\\
      0&\text{otherwise}
    \end{cases}\\
    &=\begin{cases}
      h(x,y)&\text{if $(x,y)\in\X_{m,n}$}\\
      0&\text{otherwise}
    \end{cases}\\
    &=\inn{\e_x}{\rho_{m,n}(h)\e_y}.
    \qedhere
  \end{align*}
\end{proof}

\subsection{$\U_n$ and $\U_{m,n}$}

For each $n\in\N\sqcup\{0\}$, let $\U_n$ be the family of all relatively compact open sets~$U$ of~$X$ on which $R^{\circ n}$ is homeomorphic.
This covers the complement of the branch set (i.e. the set of all branch points) of $R^{\circ n}$.
The family~$\U_n$ is closed under taking open subsets, that is, an open set included in some open set belonging to~$\U_n$ also belongs to~$\U_n$.

For each $m,n\in\N\sqcup\{0\}$, we define $\U_{m,n}$, $S_{m,n}$ and $C_c(\U_{m,n})$ by the following:
\begin{equation*}
  \U_{m,n}:=\SET\big{X_{m,n}\cap (U\times V)}{\text{$U\in\U_m$, $V\in\U_n$ and $R^{\circ m}(U)=R^{\circ n}(V)$}}.
\end{equation*}
\begin{equation*}
  S_{m,n}:=\SET\big{(x,y)\in X_{m,n}}{\text{$R^{\circ m}$ is branched at~$x$ or $R^{\circ n}$ is branched at~$y$}}.
\end{equation*}
\begin{equation*}
  C_c(\U_{m,n}):=\SET\big{f\in C_c(X_{m,n})}{\text{$\supp f\subseteq U$ holds for some $U\in\U_{m,n}$}}.
\end{equation*}
$\U_{m,n}$ is a family of open sets of~$X_{m,n}$.
Since a rational function has only finite many branch points, $S_{m,n}$ is finite and hence closed in~$X_{m,n}$.
So the complement of~$S_{m,n}$ is locally compact.

\begin{proposition}\label{3.5.1}
  The family~$\U_{m,n}$ covers the complement of~$S_{m,n}$ in~$X_{m,n}$.
\end{proposition}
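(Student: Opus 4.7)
The plan is to show that for any $(x,y)$ in $X_{m,n}\setminus S_{m,n}$, we can construct an element of $\U_{m,n}$ containing it. Let $z:=R^{\circ m}(x)=R^{\circ n}(y)$. Since $(x,y)\notin S_{m,n}$, $R^{\circ m}$ is not branched at~$x$ and $R^{\circ n}$ is not branched at~$y$. By Corollary~\ref{2.2.1.2}, the restrictions $R^{\circ m}|_{U_0}$ and $R^{\circ n}|_{V_0}$ are injective on some open neighborhoods $U_0\ni x$ and $V_0\ni y$; by shrinking and applying the open mapping theorem (Fact~\ref{2.2.1.3}), I may assume $U_0$ and $V_0$ are relatively compact and the restrictions are homeomorphisms onto their images, so $U_0\in\U_m$ and $V_0\in\U_n$.

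First I would use Proposition~\ref{3.3.2} to conclude that $W_1:=R^{\circ m}(U_0)$ and $W_2:=R^{\circ n}(V_0)$ are open in~$X$, and note that $z$ belongs to both, so $W:=W_1\cap W_2$ is an open neighborhood of~$z$. Then I would set
\begin{equation*}
  U:=U_0\cap (R^{\circ m})^{-1}(W),\qquad
  V:=V_0\cap (R^{\circ n})^{-1}(W).
\end{equation*}
Because $R^{\circ m}|_{U_0}\colon U_0\to W_1$ is a homeomorphism, $U$ is open in~$U_0$ (hence in~$X$) and $R^{\circ m}(U)=W$; symmetrically $V$ is open in~$X$ and $R^{\circ n}(V)=W$. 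Since $U\subseteq U_0$ and $V\subseteq V_0$, both sets are relatively compact and the relevant iterates are still homeomorphic on them; using the closure of $\U_m$ and $\U_n$ under taking open subsets, I get $U\in\U_m$ and $V\in\U_n$ with $R^{\circ m}(U)=R^{\circ n}(V)$.

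Finally, $x\in U$ since $x\in U_0$ and $R^{\circ m}(x)=z\in W$, and likewise $y\in V$, so $(x,y)\in X_{m,n}\cap(U\times V)$, which is an element of~$\U_{m,n}$. This completes the cover.

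The only delicate step is the matching condition $R^{\circ m}(U)=R^{\circ n}(V)$ required by the definition of~$\U_{m,n}$: arbitrary neighborhoods $U_0$, $V_0$ will not satisfy this, and the trick of simultaneously pulling back a common open neighborhood~$W$ of~$z$ along both homeomorphic branches is what forces the images to coincide. Everything else (openness, relative compactness, injectivity of the iterates on the shrunk neighborhoods) is inherited from $U_0$ and~$V_0$.
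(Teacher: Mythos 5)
Your proposal is correct and follows essentially the same route as the paper: pick neighborhoods $U_0\in\U_m$ of~$x$ and $V_0\in\U_n$ of~$y$, intersect their images to get a common open neighborhood~$W$ of~$z$, and pull $W$ back along the two homeomorphic branches to force $R^{\circ m}(U)=R^{\circ n}(V)$. Your version is, if anything, slightly more careful than the paper's in writing the pullbacks as $U_0\cap(R^{\circ m})^{-1}(W)$ rather than the bare preimage of~$W$.
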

\begin{proof}
  Take any point $(x,y)\in X_{m,n}\setminus S_{m,n}$.
  Then, $R^{\circ m}$ is not branched at~$x$ and hence there exists an open neighborhood~$U$ of~$x$ belonging to~$\U_m$.
  Similarly, there exists an open neighborhood~$V$ of~$y$ belonging to~$\U_n$.
  Since $R$ is an open map, the images $R^{\circ m}(U)$ and $R^{\circ n}(V)$ are open neighborhoods of $z:=R^{\circ m}(x)=R^{\circ n}(y)$ and so is their intersection, denoted by~$W$.
  Let $U'$ and $V'$ be the preimage of~$W$ under $R^{\circ m}$ and $R^{\circ n}$, respectively.
  Since $R^{\circ m}$ and $R^{\circ n}$ are respectively homeomorphic and in particular bijective on $U$ and $V$, the equation $R^{\circ m}(U)=W=R^{\circ n}(V)$ holds.
  Since $U$ belongs to~$\U_m$, so is its subset~$U'$.
  Similarly, since $V$ belongs to~$\U_n$, so is its subset~$V'$.
  By the above, $X_{m,n}\cap (U'\times V')$ belongs to~$\U_{m,n}$ and has the point~$(x,y)$.
\end{proof}

\begin{proposition}\label{3.5.2}
  Let $W\in\U_{m,n}$ and let $U$ and $V$ be the image of~$W$ under the first and second projection, respectively.
  Then, the following hold: $U\in\U_m$, $V\in\U_n$, and $R^{\circ m}(U)=R^{\circ n}(V)$.
  Moreover, the restriction of the first projection to~$W$ is a homeomorphism onto $U$, and similarly for~$V$.
\end{proposition}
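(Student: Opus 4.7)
The strategy is to recognize $W$ as the graph of a canonical homeomorphism between the ambient sets used to build it, which makes every assertion of the proposition immediate.

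Unpacking the hypothesis, $W\in\U_{m,n}$ means $W = X_{m,n}\cap(U'\times V')$ for some $U'\in\U_m$ and $V'\in\U_n$ with $R^{\circ m}(U') = R^{\circ n}(V')$. Since each restriction $R^{\circ m}|_{U'}$ and $R^{\circ n}|_{V'}$ is a homeomorphism onto this common image, I would introduce
\[
  \phi := (R^{\circ n}|_{V'})^{-1}\circ (R^{\circ m}|_{U'}) : U'\to V',
\]
which is a homeomorphism by construction. For any $x\in U'$, the defining equation $R^{\circ m}(x) = R^{\circ n}(y)$ has a unique solution $y\in V'$, namely $y=\phi(x)$, by injectivity of $R^{\circ n}|_{V'}$. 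Hence $W$ is exactly the graph $\{(x,\phi(x)) : x\in U'\}$ of $\phi$.

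From this graph description, $U = \pi_1(W) = U'$ and $V = \pi_2(W) = V'$ follow at once, so the memberships $U\in\U_m$ and $V\in\U_n$ and the equality $R^{\circ m}(U) = R^{\circ n}(V)$ are inherited directly from the hypotheses on $U'$ and $V'$. For the homeomorphism claim, $\pi_1|_W : W\to U$ is a continuous bijection with inverse $x\mapsto (x,\phi(x))$, which is continuous because $\phi$ is; hence $\pi_1|_W$ is a homeomorphism. The argument for $\pi_2|_W$ is symmetric, replacing $\phi$ by $\phi^{-1}$.

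There is no substantive obstacle: the only thing one has to notice is that the condition $R^{\circ m}(U') = R^{\circ n}(V')$ built into the definition of $\U_{m,n}$ is precisely what forces $\phi$ to be well-defined (surjectivity of $R^{\circ n}|_{V'}$ onto that image) and what forces $\pi_1(W)$ to fill out all of $U'$ rather than merely a subset; once that is observed, the graph picture and hence both the continuity and bijectivity of the projections are automatic.
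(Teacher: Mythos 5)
Your proof is correct and follows essentially the same route as the paper: the paper parametrizes $W$ as $\{(s(w),t(w)):w\in W'\}$ where $s$ and $t$ are the local inverses of $R^{\circ m}|_{U'}$ and $R^{\circ n}|_{V'}$, which is exactly your graph description of $\phi=t\circ(R^{\circ m}|_{U'})$ reparametrized by $W'=R^{\circ m}(U')$ instead of by $U'$. Both arguments then read off $U=U'$, $V=V'$, and the homeomorphism claims from this parametrization, so there is no substantive difference.
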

\begin{proof}
  By the definition of~$\U_{m,n}$, $W$ can be described as $X_{m,n}\cap (U'\times V')$ for some $U'\in\U_m$ and $V'\in\U_n$ satisfying the condition $R^{\circ m}(U')=R^{\circ n}(V')=:W'$.
  By the definition of~$\U_m$, the restriction of~$R^{\circ m}$ to~$U'$ has the continuous inverse~$s\colon W'\to U'$.
  Similarly, the restriction of~$R^{\circ n}$ to~$V'$ has the continuous inverse $t\colon W'\to V'$.
  
  Now we claim that $W=\SET\big{(s(w),t(w))}{w\in W'}$, which implies $U=U'$ and $V=V'$.
  For any $w\in W'$, the point $(s(w),t(w))$ is in $W=X_{m,n}\cap (U'\times V')$ since $s(w)\in U'$, $t(w)\in V'$, and $R^{\circ m}(s(w))=w=R^{\circ n}(t(w))$; and hence the inclusion~``$\supseteq$'' holds.
  Take any point~$(x,y)\in W$.
  Since $W\subseteq X_{m,n}$, the equation $R^{\circ m}(x)=R^{\circ n}(y)=:w$ holds.
  The point~$w$ is in~$W'$.
  We have $x=s(R^{\circ m}(x))=s(w)$ and $y=t(R^{\circ n}(y))=t(w)$.
  Therefore, the inclusion~``$\subseteq$'' also holds.
  
  By the above, we obtain the following commutative diagram.
  \[
    \xymatrix{
      U' \ar@{=}[d] & W' \ar[l]_s \ar[r]^t \ar[d]|(.4){(s,t)} \ar@{}[dl]|\circlearrowright \ar@{}[dr]|\circlearrowright & V' \ar@{=}[d]\\
      U & W \ar[l]^{\mathrm{proj}_1} \ar[r]_{\mathrm{proj}_2} & V
    }
  \]
  Since $s$ and $(s,t)$ are homeomorphic, the restriction of the first projection to $W$ is a homeomorphism from~$W$ onto~$U$, and similarly for~$V$.
\end{proof}

\begin{proposition}\label{3.5.3}
  The algebraic linear span of~$C_c(\U_{m,n})$ coincides with
  \begin{equation*}
    C_c(X_{m,n}\setminus S_{m,n}):=\SET\big{f\in C_c(X_{m,n})}{\text{$f$ vanishes on $S_{m,n}$}}.
  \end{equation*}
\end{proposition}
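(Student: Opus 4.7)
The plan is to establish both inclusions separately via standard topological techniques on the locally compact Hausdorff space $X_{m,n}$.

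For $\alspan C_c(\U_{m,n})\subseteq C_c(X_{m,n}\setminus S_{m,n})$, I would first observe that every $W\in\U_{m,n}$ is disjoint from $S_{m,n}$. Indeed, by Proposition~\ref{3.5.2}, $W$ has the form $X_{m,n}\cap(U\times V)$ with $U\in\U_m$ and $V\in\U_n$; since $R^{\circ m}$ restricts to a homeomorphism on $U$, no point of $U$ is a branch point of $R^{\circ m}$, and the analogous statement holds for $V$ and $R^{\circ n}$. Consequently, any $f\in C_c(\U_{m,n})$ has $\supp f$ contained in some such $W$ and so disjoint from $S_{m,n}$, meaning $f$ vanishes on $S_{m,n}$. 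Since $C_c(X_{m,n}\setminus S_{m,n})$ is a linear subspace, the algebraic span of $C_c(\U_{m,n})$ lies inside it.

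For the reverse inclusion, I would take $f\in C_c(X_{m,n}\setminus S_{m,n})$ with compact support $K$ disjoint from $S_{m,n}$ and invoke Proposition~\ref{3.5.1} to see that $\U_{m,n}$ covers $X_{m,n}\setminus S_{m,n}$, hence covers $K$; I then extract a finite subcover $W_1,\dots,W_k$. Since $X_{m,n}$ is closed in the locally compact Hausdorff space $X\times X$, it is itself locally compact Hausdorff, so the standard partition-of-unity construction based on Urysohn's lemma produces $\phi_1,\dots,\phi_k\in C_c(X_{m,n})$ with $\supp\phi_i\subseteq W_i$ and $\sum_i\phi_i=1$ on $K$. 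Then $f=\sum_i f\phi_i$, each $f\phi_i$ has support inside $W_i$, and therefore $f\phi_i\in C_c(\U_{m,n})$, realizing $f$ as a finite sum of elements of $C_c(\U_{m,n})$.

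The only delicate ingredient is the existence of the partition of unity subordinate to the finite cover $\{W_i\}_{i=1}^k$; since $X_{m,n}$ is locally compact Hausdorff and $K$ is compact with $K\subseteq\bigcup_i W_i$, this is a routine consequence of Urysohn's lemma, so no substantial obstacle is anticipated beyond carefully tracking the support conditions.
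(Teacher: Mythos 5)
Your argument follows the paper's proof essentially step for step: the forward inclusion rests on the observation that every member of $\U_{m,n}$ is disjoint from $S_{m,n}$, and the reverse inclusion is a finite subcover of the support by elements of $\U_{m,n}$ followed by a partition of unity. The only cosmetic difference is that the paper adjoins $U_0:=X_{m,n}\setminus\supp f$ to the subcover and takes a partition of unity of all of $X_{m,n}$, whereas you take one summing to $1$ only on the compact support; these are interchangeable.

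One caveat, which you assert explicitly and the paper leaves tacit: you pass from ``$f$ vanishes on $S_{m,n}$'' to ``$\supp f$ is disjoint from $S_{m,n}$''. These are not equivalent. A function can vanish at a point of $S_{m,n}$ while being nonzero arbitrarily close to it (recall $X_{m,n}$ is perfect by Proposition~\ref{3.3.3}), in which case that point lies in $\supp f$; since no element of $\U_{m,n}$ contains it --- as your own forward-inclusion argument shows --- the family $\U_{m,n}$ then fails to cover $\supp f$ and the finite-subcover step breaks down. The paper's proof has exactly the same soft spot when it claims that $\U_{m,n}$ covers $\supp f$. Indeed, every element of the algebraic span of $C_c(\U_{m,n})$ has compact support disjoint from $S_{m,n}$, hence vanishes on a neighbourhood of $S_{m,n}$, which is a priori a strictly smaller class than the right-hand side as defined. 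So your proof is faithful to the paper's, gap included; it is the closed-span statement of Corollary~\ref{3.5.4} (the form actually used later) that holds without qualification, because $S_{m,n}$ is finite and functions supported away from it are uniformly dense among those vanishing on it.
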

\begin{proof}
  The span is clearly included in $C_c(X_{m,n}\setminus S_{m,n})$.
  Now we show the inverse inclusion.
  Take any $f\in C_c(X_{m,n}\setminus S_{m,n})$.
  $\U_{m,n}$ covers the compact set~$\supp f$ and hence has a finite subcover~$\{U_1, ..., U_k\}$.
  Let~$U_0$ be the complement of~$\supp f$.
  Then, $\{U_0, U_1, ..., U_k\}$ covers $X_{m,n}$.
  Let $\{a_0, a_1, ..., a_k\}$ be a partition of unity subordinated to~$\{U_0, U_1, ..., U_k\}$.
  $\supp a_0$ is disjoint from~$\supp f$ and hence $a_0f=0$.
  Therefore, we have $a_1f+\cdots+a_kf=f$.
  For each $i=1, ..., k$, the function~$a_if$ is continuous and $\supp a_if\subseteq\supp a_i\subseteq U_i$ holds.
  So $a_if$'s belong to~$C_c(\U_{m,n})$ and their sum~$f$ is belongs to its algebraic linear span.
\end{proof}

\begin{corollary}\label{3.5.4}
  The closed linear span of~$C_c(\U_{m,n})$ coincides with
  \begin{equation*}
    C_0(X_{m,n}\setminus S_{m,n}):=\SET\big{f\in C_0(X_{m,n})}{\text{$f$ vanishes on $S_{m,n}$}}.
  \end{equation*}
\end{corollary}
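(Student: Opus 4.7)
The plan is to combine Proposition~\ref{3.5.3} with the standard fact that $C_c$ is supremum-norm dense in $C_0$ over any locally compact Hausdorff space. By Proposition~\ref{3.5.3}, the algebraic span of $C_c(\U_{m,n})$ equals $C_c(X_{m,n}\setminus S_{m,n})$, so after taking closures the corollary reduces to the equality $\overline{C_c(X_{m,n}\setminus S_{m,n})}=C_0(X_{m,n}\setminus S_{m,n})$ inside $C_0(X_{m,n})$. The inclusion ``$\subseteq$'' is immediate, since $C_0(X_{m,n}\setminus S_{m,n})$ is the kernel of the (continuous) restriction map $C_0(X_{m,n})\to\C^{S_{m,n}}$, hence closed in $C_0(X_{m,n})$, and it manifestly contains $C_c(X_{m,n}\setminus S_{m,n})$.

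For the reverse inclusion I would use a Urysohn cutoff. Since $S_{m,n}$ is finite, it is closed in the locally compact space $X_{m,n}$ (Proposition~\ref{3.3.3}), so $X_{m,n}\setminus S_{m,n}$ is open in $X_{m,n}$ and in particular locally compact Hausdorff. Given $f\in C_0(X_{m,n}\setminus S_{m,n})$ and $\varepsilon>0$, the set $K:=\set{x\in X_{m,n}}{\abs{f(x)}\geq\varepsilon}$ is compact by the $C_0$-property of $f$ on $X_{m,n}$, and is disjoint from $S_{m,n}$ because $f$ vanishes there. Urysohn's lemma applied inside $X_{m,n}\setminus S_{m,n}$ then yields $\phi\in C_c(X_{m,n}\setminus S_{m,n})$ with $0\leq\phi\leq 1$ and $\phi\equiv 1$ on~$K$; hence $\phi f\in C_c(X_{m,n}\setminus S_{m,n})$ and $\norm{f-\phi f}_\infty\leq\varepsilon$, giving the required approximation.

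No step presents a genuine obstacle; the corollary is a routine topological consequence of Proposition~\ref{3.5.3}. The only point requiring a little care is to apply Urysohn in $X_{m,n}\setminus S_{m,n}$ rather than in $X_{m,n}$, so that the cutoff $\phi$ is automatically supported away from $S_{m,n}$.
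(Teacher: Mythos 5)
Your proof is correct and follows exactly the route the paper intends: the paper states this as an immediate corollary of Proposition~\ref{3.5.3}, leaving to the reader precisely the closure-taking and the standard Urysohn-cutoff density argument that you supply.
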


\subsection{$E_{m,n}$}

Let $m,n\in\N\sqcup\{0\}$.
We define $E_{m,n}$ as the range of~$\rho_{m,n}$.
Since $\rho_{m,n}$ is bi-Lipschitz, $E_{m,n}$ is complete and hence closed in~$M_\X$.
Therefore, $E_{m,n}$ is a Banach subspace of~$M_\X$.
We note that $E_{m,0}=E_m$.
In particular, $E_{0,0}=E_0=\rho_A(A)$ and $E_{1,0}=E_1=\rho_E(E)$ hold.

\begin{lemma}\label{3.6.1}
  $E_mE_n^*:=\SET{}{\alpha\beta^*}{\text{$\alpha\in E_m$ and $\beta\in E_n$}}$ is included in~$E_{m,n}$.
\end{lemma}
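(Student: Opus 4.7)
The plan is to observe that this lemma follows almost immediately by combining Propositions~\ref{3.4.3} and~\ref{3.4.4}, so the proof is essentially a one-line chase through definitions.

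First, I would pick an arbitrary element of $E_mE_n^*$, which by definition has the form $\alpha\beta^*$ for some $\alpha\in E_m$ and $\beta\in E_n$. By the definition of $E_m$ and $E_n$ as the ranges of $\rho_m$ and $\rho_n$, we can write $\alpha=\rho_m(f)$ and $\beta=\rho_n(g)$ for some $f,g\in C_0(X)$. Thus $\alpha\beta^* = \rho_m(f)\rho_n(g)^*$.

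Next, I would define $h\colon X_{m,n}\to\C$ by $h(x,y):=f(x)\overline{g(y)}$. By Proposition~\ref{3.4.3}, this $h$ lies in $C_0(X_{m,n})$, so $\rho_{m,n}(h)$ is a well-defined element of $E_{m,n}$. Then, by Proposition~\ref{3.4.4}, we have $\rho_m(f)\rho_n(g)^* = \rho_{m,n}(h)$, and hence $\alpha\beta^* = \rho_{m,n}(h) \in E_{m,n}$.

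Since the argument works for an arbitrary element of $E_mE_n^*$, this yields the desired inclusion $E_mE_n^*\subseteq E_{m,n}$. There is no real obstacle here: all the analytic work has been done in Proposition~\ref{3.4.3} (continuity and decay at infinity of $(x,y)\mapsto f(x)\overline{g(y)}$ restricted to $X_{m,n}$) and Proposition~\ref{3.4.4} (the entry-by-entry identification of $\rho_m(f)\rho_n(g)^*$ with $\rho_{m,n}(h)$). The lemma is a formal assembly of these two facts.
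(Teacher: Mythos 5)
Your proof is correct and follows exactly the paper's approach: the paper's own proof simply states that the lemma ``immediately follows from Proposition~\ref{3.4.3} and~\ref{3.4.4},'' and your write-up is precisely the one-line unpacking of that assembly. Nothing is missing.
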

\begin{proof}
  It immediately follows from Proposition~\ref{3.4.3} and~\ref{3.4.4}.
\end{proof}

\begin{lemma}\label{3.6.2}
  The operator~$\rho_{m,n}$ maps $C_c(\U_{m,n})$ into~$E_mE_n^*$.
\end{lemma}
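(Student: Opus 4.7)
The plan is to express any $f\in C_c(\U_{m,n})$ as a ``separated product'' $(x,y)\mapsto g(x)\overline{h(y)}$ on $X_{m,n}$ for suitable $g,h\in C_0(X)$, and then invoke Proposition~\ref{3.4.4}. Fix $f\in C_c(\U_{m,n})$ and choose $W\in\U_{m,n}$ with $\supp f\subseteq W$. By Proposition~\ref{3.5.2} we may write $W=X_{m,n}\cap(U\times V)$ where $U\in\U_m$, $V\in\U_n$, $R^{\circ m}(U)=R^{\circ n}(V)$, and both coordinate projections restrict to homeomorphisms $W\to U$ and $W\to V$. Let $\tau\colon U\to V$ denote the composition $\pi_2\circ(\pi_1|_W)^{-1}$; equivalently, $\tau(x)$ is the unique $y\in V$ with $R^{\circ m}(x)=R^{\circ n}(y)$, and $\tau$ is a homeomorphism.

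By Urysohn's lemma, choose a real-valued $h\in C_c(V)$ with $h\equiv 1$ on the compact set $\pi_2(\supp f)\subseteq V$, and regard $h$ as an element of $C_0(X)$ by zero extension. Next define $g\colon X\to\C$ by $g(x):=f(x,\tau(x))$ for $x\in U$ and $g(x):=0$ otherwise. Continuity of $\tau$ gives $g\in C(U)$, and $\supp g\subseteq\pi_1(\supp f)$ is compact in $U$, so $g\in C_c(X)\subseteq C_0(X)$.

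The key claim is that $g(x)\overline{h(y)}=f(x,y)$ for every $(x,y)\in X_{m,n}$. If $(x,y)\in W$ then $y=\tau(x)$, so $g(x)=f(x,y)$; moreover $g(x)\neq 0$ forces $y\in\pi_2(\supp f)$ and thus $h(y)=1$, while $g(x)=0$ makes both sides vanish. If $(x,y)\notin W$ then $f(x,y)=0$, and we must check $g(x)\overline{h(y)}=0$: when $x\notin U$ this is immediate from $g(x)=0$, and when $x\in U$ the relation $R^{\circ n}(y)=R^{\circ m}(x)=R^{\circ n}(\tau(x))$ combined with the injectivity of $R^{\circ n}$ on $V$ forces $y\notin V$ (otherwise $y=\tau(x)$ and $(x,y)\in W$), so $h(y)=0$.

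By Proposition~\ref{3.4.3}, the function $(x,y)\mapsto g(x)\overline{h(y)}$ lies in $C_0(X_{m,n})$ and, by the verification above, coincides with $f$ on $X_{m,n}$. Proposition~\ref{3.4.4} then yields $\rho_{m,n}(f)=\rho_m(g)\rho_n(h)^*\in E_mE_n^*$, as required. The delicate step is the off-$W$ case analysis: it is precisely the injectivity of $R^{\circ n}$ on the single chart $V$ (i.e., $V\in\U_n$) that rules out a ``stray partner'' $y\in V$ for a given $x\in U$, and this is why the lemma is formulated for the refined family $\U_{m,n}$ rather than on all of $X_{m,n}$.
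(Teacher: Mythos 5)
Your proof is correct and takes essentially the same route as the paper's: the paper also writes $f$ as a separated product by composing $f$ with the inverse $s$ of the first projection (your $x\mapsto f(x,\tau(x))$ is exactly the paper's $f_s(x)=f(s(x))$), multiplies by a cut-off function equal to $1$ on $\pi_2(\supp f)$ and supported in $V$, and concludes via Proposition~\ref{3.4.4}. Your explicit off-$W$ case analysis (using injectivity of $R^{\circ n}$ on $V$ to exclude a stray partner $y\in V$) only spells out a step the paper leaves implicit.
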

\begin{proof}
  Take any $f\in C_c(\U_{m,n})$.
  There exists $W\in\U_{m,n}$ with $\supp f\subseteq W$.
  Let $U$ and $V$ be the image of~$W$ under the first and second projection, respectively.
  By Proposition~\ref{3.5.2}, the restriction of the first projection to~$W$ is a homeomorphism onto~$U$ and hence has the continuous inverse, denoted by~$s$.
  Let $f_s$ be the continuous function on~$X$ which values $f(s(x))$ at $x\in U$ and otherwise zero.
  The image of~$\supp f$ under the second projection is compact and included in~$V$; hence there exists a continuous function~$g$ on~$X$ which values~$1$ on the image and whose support included in~$V$.
  For any $(x,y)\in X_{m,n}$, we have
  \begin{equation*}
    f_s(x)\overline{g(y)}
    =f(x,y)\cdot\overline{g(y)}
    =\begin{cases}
      f(x,y)\cdot\overline{1}&\text{if $(x,y)\in\supp f$}\\
      0\cdot\overline{g(y)}&\text{otherwise}
    \end{cases}
    =f(x,y).
  \end{equation*}
  Therefore, by Proposition~\ref{3.4.4}, $\rho_{m,n}(f)=\rho_m(f_s)\rho_n(g)^*\in E_mE_n^*$ holds.
\end{proof}

\begin{corollary}\label{3.6.3}
  The operator~$\rho_{m,n}$ maps $C_0(X_{m,n}\setminus S_{m,n})$ into the closed linear span of~$E_mE_n^*$.
\end{corollary}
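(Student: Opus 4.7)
The plan is a routine density-and-continuity argument that combines the two immediately preceding results: Lemma~\ref{3.6.2} handles the ``basic'' functions, and Corollary~\ref{3.5.4} tells us these basic functions span densely everything we care about. All that is needed beyond this is the continuity of~$\rho_{m,n}$, which was already observed in the paragraph defining it (it is bi-Lipschitz from $C_0(X_{m,n})$ with supremum norm to $M_\X$ with operator norm, by Proposition~\ref{3.4.2}).

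Concretely, I would start with an arbitrary $f\in C_0(X_{m,n}\setminus S_{m,n})$. By Corollary~\ref{3.5.4}, I can approximate $f$ in supremum norm by a sequence $(f_k)$ lying in the algebraic linear span of $C_c(\U_{m,n})$. Writing each $f_k$ as a finite linear combination of elements of $C_c(\U_{m,n})$, Lemma~\ref{3.6.2} applied term-by-term, combined with linearity of $\rho_{m,n}$, places each $\rho_{m,n}(f_k)$ in the algebraic linear span of~$E_mE_n^*$, which is \emph{a fortiori} contained in its closed linear span.

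Finally, since $\rho_{m,n}$ is bounded, $\rho_{m,n}(f_k)\to\rho_{m,n}(f)$ in operator norm; as the closed linear span of~$E_mE_n^*$ is (by definition) a closed subspace of~$M_\X$, it contains this limit, which is exactly $\rho_{m,n}(f)$. This gives the claim. There is no real obstacle: the corollary is essentially a formal packaging of Lemma~\ref{3.6.2} and Corollary~\ref{3.5.4} via continuity, and the proof should fit in a few lines.
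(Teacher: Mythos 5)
Your proof is correct and is exactly the argument the paper intends: it cites precisely Corollary~\ref{3.5.4} and Lemma~\ref{3.6.2} and declares the result immediate, and your write-up simply fills in the routine density-plus-boundedness step connecting them. Nothing to add.
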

\begin{proof}
  It immediately follows from Corollary~\ref{3.5.4} and Lemma~\ref{3.6.2}.
\end{proof}

\begin{proposition}\label{3.6.4}
  $E_{m,n}$ coincides with the closed linear span of~$E_mE_n^*$, and hence is included in~$C^*(\rho)$.
\end{proposition}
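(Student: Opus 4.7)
The claimed equality splits into two inclusions. The direction $\clspan(E_m E_n^*) \subseteq E_{m,n}$ is immediate from Lemma~\ref{3.6.1} together with the closedness of $E_{m,n}$ as a subspace of~$M_\X$ (which holds since $\rho_{m,n}$ is bi-Lipschitz and its domain is complete).

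For the reverse inclusion, my plan is to take any $\rho_{m,n}(f) \in E_{m,n}$ with $f \in C_0(X_{m,n})$ and split $f = h + g$, where $h$ is a finite linear combination of functions of the form $(x,y) \mapsto a(x)\overline{b(y)}$ with $a,b \in C_0(X)$, chosen so that $h$ agrees with $f$ on the finite set $S_{m,n}$. Then $g := f - h$ lies in $C_0(X_{m,n})$ and vanishes on $S_{m,n}$, i.e.\ $g \in C_0(X_{m,n} \setminus S_{m,n})$. By Proposition~\ref{3.4.4}, $\rho_{m,n}(h)$ lies in the algebraic linear span of $E_m E_n^*$, and by Corollary~\ref{3.6.3}, $\rho_{m,n}(g)$ lies in $\clspan(E_m E_n^*)$; adding yields $\rho_{m,n}(f) \in \clspan(E_m E_n^*)$. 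The final assertion $E_{m,n} \subseteq C^*(\rho)$ is then immediate from Corollary~\ref{3.2.5}, since $E_m E_n^* \subseteq C^*(\rho)$ and $C^*(\rho)$ is closed.

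The only non-routine step is producing the interpolant~$h$. Since every iterate of~$R$ has only finitely many branch points and each critical value has only finitely many preimages, $S_{m,n}$ is a finite set, say $\{(x_1,y_1),\ldots,(x_N,y_N)\}$. The task reduces to surjectivity of the restriction map $\alspan(H_{m,n}) \to \C^{S_{m,n}}$, where $H_{m,n} := \SET{}{(x,y) \mapsto a(x)\overline{b(y)}}{a,b \in C_0(X)}$; equivalently, that if $\sum_{i=1}^{N} c_i\, a(x_i)\,\overline{b(y_i)} = 0$ for every $a,b \in C_0(X)$, then all $c_i = 0$. This follows by fixing $b$, using Urysohn's lemma on the locally compact Hausdorff space~$X$ to pick $a \in C_0(X)$ targeting each distinct $x$-coordinate appearing in $S_{m,n}$ (thereby reducing to a statement over each fiber), and then arguing symmetrically in~$b$. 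I expect this coordinate bookkeeping, made mildly subtle by the possibility that several points of $S_{m,n}$ share an $x$- or $y$-coordinate, to be the main (though shallow) obstacle.
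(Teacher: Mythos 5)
Your proposal is correct and follows essentially the same route as the paper: both prove the easy inclusion from Lemma~\ref{3.6.1} together with the closedness of~$E_{m,n}$, and both obtain the reverse inclusion by writing $f\in C_0(X_{m,n})$ as an interpolant on the finite set~$S_{m,n}$ built from finitely many product functions $(x,y)\mapsto a(x)\overline{b(y)}$ (handled by Proposition~\ref{3.4.4}) plus a remainder in $C_0(X_{m,n}\setminus S_{m,n})$ (handled by Corollary~\ref{3.6.3}). The only difference is cosmetic: the paper builds the interpolant explicitly with bump functions supported in pairwise disjoint rectangles $U_i\times V_i$, whereas you argue abstractly via linear independence of evaluation functionals, which if anything treats the case of points of~$S_{m,n}$ sharing an $x$- or $y$-coordinate more carefully than the paper's literal wording.
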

\begin{proof}
  By Lemma~\ref{3.6.1}, $E_{m,n}$ includes the closed linear span of~$E_mE_n^*$.
  Now we show the inverse inclusion.
  
  Take any $f\in C_0(X_{m,n})$.
  Let $(x_1,y_1)$, ..., $(x_k,y_k)$ be all the elements of the finite set~$S_{m,n}$.
  We can take relatively compact open neighborhoods~$U_i$ of~$x_i$ and $V_i$ of~$y_i$ in~$X$ for each~$i$ such that $U_i\times V_i$'s are pairwise disjoint.
  Also, for each~$i$, we can take
  \begin{itemize}
    \item a continuous function~$g_i$ on~$X$ supported by~$U_i$ which values $f(x_i,y_i)$ at~$x_i$ and vanishes at the other~$x_j$'s and
    \item a continuous function~$h_i$ on~$X$ supported by~$V_i$ which values~$1$ at~$y_i$ and vanishes at the other~$y_j$'s.
  \end{itemize}
  Let $f_i$ be the function on~$X_{m,n}$ which values $g_i(x)\overline{h_i(y)}$ at~$(x,y)$.
  By Proposition~\ref{3.4.4}, $\rho_{m,n}(f_i)=\rho_m(g_i)\rho_n(h_i)^*\in E_mE_n^*$ holds.
  Each~$f_i$ coincides with~$f$ at~$(x_i,y_i)$ and vanishes at the other~$(x_j,y_j)$'s.
  Hence, $f_0:=f-(f_1+\cdots+f_k)$ vanishes on~$S_{m,n}$ and belongs to $C_0(X_{m,n}\setminus S_{m,n})$.
  By Corollary~\ref{3.6.3}, $\rho_{m,n}(f_0)$ belongs to the closed linear span of~$E_mE_n^*$.
  
  By the above, $\rho_{m,n}(f)=\rho_{m,n}(f_0)+\rho_{m,n}(f_1)+\cdots+\rho_{m,n}(f_k)$ belongs to the closed linear span of~$E_mE_n^*$.
\end{proof}

\begin{proposition}\label{3.6.5}\ \par
  (1) $E_{0,0}$ coincides with $E_0=\rho_A(A)$.
  
  (2) $E_{m,0}$ coincides with the closed linear span of
  \begin{equation*}
    E_1^m:=\SET\big{\rho_E(f_1)\cdots\rho_E(f_m)}{f_1, ..., f_m\in E}.
  \end{equation*}
  
  (3) $E_{0,n}$ coincides with the closed linear span of
  \begin{equation*}
    E_1^{*n}:=\SET\big{\rho_E(f_1)^*\cdots\rho_E(f_n)^*}{f_1, ..., f_n\in E}.
  \end{equation*}
  
  (4) $E_{m,n}$ coincides with the closed linear span of
  \begin{equation*}
    E_1^mE_1^{*n}:=\SET\big{\alpha\beta}{\text{$\alpha\in E_1^m$ and $\beta\in E_1^{*n}$}}.
  \end{equation*}
\end{proposition}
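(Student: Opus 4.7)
The plan is to establish the four claims in the order (1), (2), (3), (4), deriving (3) from (2) by adjoint symmetry and then combining (2) and (3) to obtain (4); everything rests on Propositions~\ref{3.6.4} and~\ref{3.2.4}, with the only substantive new work occurring in~(2).

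For~(1), Proposition~\ref{3.6.4} gives $E_{0,0}$ as the closed linear span of~$E_0E_0^*$; since $\rho_A$ is a $*$-homomorphism, $E_0^*=E_0$, and Proposition~\ref{3.2.4} with $m=n=0$ yields $\overline{E_0E_0}=E_0$, which is already a closed linear subspace of~$M_\X$. For~(2), the same combination applied to $E_mE_0^*=E_mE_0$ yields $E_{m,0}=E_m$, so it remains to identify $E_m$ with the closed linear span of~$E_1^m$. Iterating Proposition~\ref{3.2.1} gives
\begin{equation*}
  \rho_1(f_1)\cdots\rho_1(f_m)=\rho_m\bigl(f_1\cdot(f_2\circ R)\cdots(f_m\circ R^{\circ(m-1)})\bigr),
\end{equation*}
and iterating the surjectivity in Lemma~\ref{3.2.2} shows that the inner argument ranges over all of~$C_c(X)$ as the $f_i$ do. Hence $\rho_m(C_c(X))\subseteq E_1^m\subseteq E_m$, and since $C_c(X)$ is dense in $C_0(X)$ and $\rho_m$ is bounded, $E_1^m$ is norm-dense in the closed linear subspace~$E_m$.

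For~(3), taking adjoints in~(2) together with the identity $(E_1^n)^*=E_1^{*n}$ identifies the closed linear span of $E_1^{*n}$ with $E_n^*$; Proposition~\ref{3.6.4} and the adjoint of Proposition~\ref{3.2.4} applied to $E_nE_0$ similarly identify $E_{0,n}$ with~$E_n^*$. For~(4), Proposition~\ref{3.6.4} reduces the claim to matching the closed linear spans of $E_mE_n^*$ and $E_1^mE_1^{*n}$; the inclusion $\supseteq$ is immediate, while for the reverse inclusion I would, given $\alpha\in E_m$ and $\beta\in E_n$, use~(2) to choose approximating sequences $\alpha_k\in E_1^m$ and $\beta_k\in E_1^n$, whence $\alpha_k\beta_k^*\in E_1^mE_1^{*n}$ and joint norm continuity of multiplication yields $\alpha\beta^*=\lim_k\alpha_k\beta_k^*\in\overline{E_1^mE_1^{*n}}$. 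The only mildly delicate point is the surjectivity step in~(2): one must verify that iterating Lemma~\ref{3.2.2} literally fills out all of~$C_c(X)$, not merely a dense subset, so that $\rho_m(C_c(X))$ is genuinely contained in~$E_1^m$.
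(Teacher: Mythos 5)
Your proof is correct, and its overall architecture---reduce everything to Propositions~\ref{3.6.4} and~\ref{3.2.4}, derive (3) from (2) by taking adjoints, and (4) from (2) and (3) by joint norm continuity of multiplication---matches the paper's. The one place you genuinely diverge is the identification $E_{m,0}=E_m$ underlying (1) and (2): the paper gets this for free from the homeomorphism $X_{m,0}\cong X$ (the graph of $R^{\circ m}$ projects onto $X$), under which $\rho_{m,0}$ becomes $\rho_m$ entrywise, whereas you route it through Proposition~\ref{3.6.4} together with $E_0^*=E_0$ and the factorization $\overline{E_mE_0}=E_m$ from Proposition~\ref{3.2.4}. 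Both are valid; the paper's is a one-line observation at the level of the underlying spaces, while yours stays entirely inside the algebra and never unwinds the definition of $\rho_{m,0}$. Your residual worry about the surjectivity step in (2) is unfounded: iterating Lemma~\ref{3.2.2} does fill out all of $C_c(X)$ exactly---given $h\in C_c(X)$, factor $h=g\cdot(f_m\circ R^{\circ(m-1)})$ with $g,f_m\in C_c(X)$ and recurse on $g$---so $\rho_m(C_c(X))\subseteq E_1^m$ holds as stated, and in particular $E_1^m$ is genuinely dense in $E_m$, which is also what legitimizes your choice of single-element approximating sequences $\alpha_k\in E_1^m$ in step (4). (You could have bypassed the iterated factorization altogether by inducting directly with Proposition~\ref{3.2.4}: $E_m=\overline{E_{m-1}E_1}$ and the inductive hypothesis $E_{m-1}=\clspan E_1^{m-1}$ already give $E_m=\clspan E_1^m$.)
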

\begin{proof}
  (1) follows from that $X_{0,0}$ is isomorphic to~$X$.
  (2) follows from Proposition~\ref{3.2.4} and the fact that $X_{m,0}$ is isomorphic to~$X$.
  (3) follows from (2).
  (4) follows from Proposition~\ref{3.2.4} and~\ref{3.6.4}.
\end{proof}

\subsection{$\widehat\rho$}

We define maps $\widehat\rho_A\colon A\to C^*(\rho)\otimes C^*(\Z)$ and $\widehat\rho_E\colon E\to C^*(\rho)\otimes C^*(\Z)$ by the following:
\begin{alignat*}{2}
  \widehat\rho_A(a)&:=\rho_A(a)\otimes\upsilon_0&\quad&\text{for $a\in A$},\\
  \widehat\rho_E(f)&:=\rho_E(f)\otimes\upsilon_1&\quad&\text{for $f\in E$}.
\end{alignat*}

\begin{proposition}
$\widehat\rho=(\widehat\rho_A,\widehat\rho_E)$ is a representation of the C*-correspondence~$E$ over~$A$.
\end{proposition}
\begin{proof}
It is easy to verify that $\widehat\rho_A$ is a *-homomorphism and that $\widehat\rho_E$ is a complex-linear map.

We show that $\widehat\rho$ preserves inner product.
For any $f,g\in E$, we have
\begin{gather*}
\widehat\rho_E(f)^*\widehat\rho_E(g)
=\bigl(\rho_E(f)\otimes\upsilon_1\bigr)^*\bigl(\rho_E(g)\otimes\upsilon_1\bigr)
=\bigl(\rho_E(f)^*\otimes\upsilon_{-1}\bigr)\bigl(\rho_E(g)\otimes\upsilon_1\bigr)
\\
=\bigl(\rho_E(f)^*\rho_E(g)\bigr)\otimes\bigl(\upsilon_{-1}\upsilon_1\bigr)
=\rho_A(\inn{f}{g})\otimes\upsilon_0
=\widehat\rho_A(\inn{f}{g}).
\end{gather*}
Therefore, $\widehat\rho$ preserves inner product.

We show that $\widehat\rho$ preserves left $A$-scalar multiplication.
For any $a\in A$ and $f\in E$, we have
\begin{gather*}
\widehat\rho_A(a)\widehat\rho_E(f)
=\bigl(\rho_A(a)\otimes\upsilon_0\bigr)\bigl(\rho_E(f)\otimes\upsilon_1\bigr)
\\
=\bigl(\rho_A(a)\rho_E(f)\bigr)\otimes\bigl(\upsilon_0\upsilon_1\bigr)
=\rho_E(af)\otimes\upsilon_1
=\widehat\rho_E(af).
\end{gather*}
Therefore, $\widehat\rho$ preserves left $A$-scalar multiplication.

By the above discussion, $\widehat\rho$ is a representation of $E$.
\end{proof}

\begin{lemma}\label{3.7.1}
  Let $\xi$ be an element of~$C^*(\rho)\otimes C^*(\Z)$.
  Assume that $\xi\widehat\rho_E(f)=0$ holds for any~$f\in E$.
  Then, $\xi=0$.
\end{lemma}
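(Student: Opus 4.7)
The plan is to reduce the lemma to Lemma~\ref{3.1.2} and Proposition~\ref{2.1.5.1} by slicing $\xi$ with the bounded linear functionals $\delta_n\colon C^*_r(\Z)\to\C$ introduced in Section~\ref{s2.1.5}. For each $n\in\Z$ set $\xi_n:=(\id\otimes\delta_n)(\xi)\in C^*(\rho)$. Proposition~\ref{2.1.5.1} (applied with $B:=C^*(\rho)$) says that if we manage to prove $\xi_n=0$ for every $n\in\Z$ then $\xi=0$, so the problem collapses to showing $\xi_n=0$ for each single $n$.

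To obtain $\xi_n=0$, I would apply Lemma~\ref{3.1.2} to $\xi_n$ regarded as an element of~$M_\X$; this only asks for $\xi_n\rho_E(f)=0$ for every $f\in E$. The bridge between the hypothesis $\xi\widehat\rho_E(f)=0$ and this claim is a short slice computation. For an elementary-tensor representation $\xi=\sum_i \alpha_i\otimes\beta_i$ one computes
\begin{equation*}
  (\id\otimes\delta_{n+1})\bigl(\xi\widehat\rho_E(f)\bigr)
  =\sum_i \alpha_i\rho_E(f)\cdot\delta_{n+1}(\beta_i\upsilon_1)
  =\sum_i \alpha_i\delta_n(\beta_i)\rho_E(f)
  =\xi_n\rho_E(f),
\end{equation*}
using $\upsilon_1\upsilon_{-(n+1)}=\upsilon_{-n}$ to rewrite $\delta_{n+1}(\beta_i\upsilon_1)=\delta_\Z(\beta_i\upsilon_{-n})=\delta_n(\beta_i)$, and the fact that each $\delta_n(\beta_i)$ is a scalar so it commutes with $\rho_E(f)$. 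For general $\xi\in C^*(\rho)\otimes C^*_r(\Z)$ the same identity follows by continuity of $\id\otimes\delta_{n+1}$ and of right multiplication by $\rho_E(f)$. Since $\xi\widehat\rho_E(f)=0$ by assumption, this forces $\xi_n\rho_E(f)=0$ for every $f\in E$, and Lemma~\ref{3.1.2} gives $\xi_n=0$. Running this argument for every $n$ and invoking Proposition~\ref{2.1.5.1} completes the proof.

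The only step requiring care is the passage from elementary tensors to arbitrary $\xi$: one needs the slice map $\id\otimes\delta_{n+1}$ to be a well-defined bounded operator on the (minimal) tensor product $C^*(\rho)\otimes C^*_r(\Z)$, which is immediate since $\delta_{n+1}$ is a bounded linear functional on $C^*_r(\Z)$. Beyond that bookkeeping, the argument is a direct manipulation of the functionals $\delta_n$ and of the generator $\upsilon_1$, so I expect no substantive obstacle; the real content was already packed into Lemma~\ref{3.1.2} and Proposition~\ref{2.1.5.1}.
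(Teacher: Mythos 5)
Your proposal is correct and follows essentially the same route as the paper: slice with $\id\otimes\delta_n$, use the identity $(\id\otimes\delta_n)(\xi)\rho_E(f)=(\id\otimes\delta_{n+1})(\xi\widehat\rho_E(f))$, then apply Lemma~\ref{3.1.2} and Proposition~\ref{2.1.5.1}. The only difference is that you spell out the elementary-tensor verification of that identity, which the paper leaves implicit.
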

\begin{proof}
  By the assumption, we have the following for any~$n\in\Z$ and any~$f\in E$:
  \begin{equation*}
    (\id\otimes\delta_n)(\xi)\rho_E(f)
    =(\id\otimes\delta_{n+1})(\xi\widehat\rho_E(f))
    =(\id\otimes\delta_{n+1})(0)
    =0.
  \end{equation*}
  Applying Lemma~\ref{3.1.2}, we obtain that $(\id\otimes\delta_n)(\xi)=0$ for any~$n\in\Z$.
  Applying Proposition~\ref{2.1.5.1}, we obtain that~$\xi=0$.
\end{proof}

\begin{lemma}\label{3.7.2}
  The representation~$\widehat\rho$ is injective and covariant and also admits a gauge action.
\end{lemma}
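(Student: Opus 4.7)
The plan is to verify the three properties one at a time, each time reducing to a fact already established earlier in the paper.

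For injectivity of $\widehat\rho_A$, I would note that $\upsilon_0$ is the unit of $C^*_r(\Z)$, so $\widehat\rho_A(a)=\rho_A(a)\otimes 1$. Since $\rho_A$ is injective by Proposition~\ref{3.1.1}, and the spatial tensor product with a unital C*-algebra is faithful, $\widehat\rho_A$ is injective. (The nuclearity of $C^*_r(\Z)$ invoked in Section~\ref{s2.1.5} makes this unambiguous.)

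For covariance, the strategy mirrors the proof of Proposition~\ref{3.1.3}. Applying Proposition~\ref{2.1.3.2} to the representation $\widehat\rho$ gives
\begin{equation*}
  \bigl(\psi_{\widehat\rho}(\varphi(a))-\widehat\rho_A(a)\bigr)\widehat\rho_E(f)=0
  \quad\text{for all $a\in\I_E$ and $f\in E$.}
\end{equation*}
The element $\xi:=\psi_{\widehat\rho}(\varphi(a))-\widehat\rho_A(a)$ lies in $C^*(\widehat\rho)\subseteq C^*(\rho)\otimes C^*_r(\Z)$, so Lemma~\ref{3.7.1} forces $\xi=0$, which is exactly the covariance identity. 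This is the step that explains why Lemma~\ref{3.7.1} was proved first.

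For the gauge action, I would simply transport the canonical gauge action on $C^*_r(\Z)$ from Section~\ref{s2.1.5}. Define $\Phi_c:=\id\otimes\Phi_c^{\Z}$ on $C^*(\rho)\otimes C^*_r(\Z)$, where $\Phi_c^{\Z}$ is the $*$-automorphism with $\Phi_c^{\Z}(\upsilon_n)=\upsilon_n c^n$. Then $\Phi_c(\widehat\rho_A(a))=\rho_A(a)\otimes\upsilon_0=\widehat\rho_A(a)$ and $\Phi_c(\widehat\rho_E(f))=\rho_E(f)\otimes\upsilon_1 c=c\,\widehat\rho_E(f)$; since $\Phi_c$ sends the generators of $C^*(\widehat\rho)$ into $C^*(\widehat\rho)$, its restriction there is a $*$-homomorphism with the required properties.

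No step seems genuinely difficult: injectivity reduces to a tensor-product observation, covariance is an immediate combination of Proposition~\ref{2.1.3.2} with the newly proved Lemma~\ref{3.7.1}, and the gauge action is obtained for free from the dilation by $C^*_r(\Z)$. The only point requiring a little care is making sure that $\Phi_c$ preserves $C^*(\widehat\rho)$ rather than only the ambient tensor product algebra, and that is handled by checking the generators.
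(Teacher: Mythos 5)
Your proposal is correct and follows essentially the same route as the paper: injectivity is inherited from $\rho$ via Proposition~\ref{3.1.1}, covariance is obtained by applying Proposition~\ref{2.1.3.2} to $\widehat\rho$ and killing the difference $\psi_{\widehat\rho}(\varphi(a))-\widehat\rho_A(a)$ with Lemma~\ref{3.7.1}, and the gauge action is $\id\otimes\Phi_c$ built from the automorphism of $C^*_r(\Z)$ recalled in sub-subsection~\ref{s2.1.5}. Your version merely spells out details the paper leaves implicit (that $\upsilon_0$ is the unit, and that $\id\otimes\Phi_c$ restricts to $C^*(\widehat\rho)$ because it fixes the generators up to the scalar $c$), all of which is accurate.
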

\begin{proof}
  The injectivity of~$\widehat\rho$ follows from the injectivity of~$\rho$ (Proposition~\ref{3.1.1}).
  The covariantness follows from Proposition~\ref{2.1.3.2} and Lemma~\ref{3.7.1}.
  Recall the *-homomorphism~$\Phi_c$ in Proposition~\ref{2.1.5.2}.
  The operator $\id\otimes\Phi_c\colon C^*(\rho)\otimes C^*(\Z)\to C^*(\rho)\otimes C^*(\Z)$ is a gauge action for~$c$.
  Therefore, $\widehat\rho$ admits a gauge action.
\end{proof}

\begin{lemma}\label{3.7.3}
  The representation~$\widehat\rho$ is universal as a covariant representation and $C^*(\widehat\rho)$ is isomorphic to~$\O_R(X)$.
\end{lemma}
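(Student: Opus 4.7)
The plan is to obtain this lemma as a direct consequence of Fact~\ref{2.1.3.4} together with Lemma~\ref{3.7.2}. Since $\widehat\rho$ is a covariant representation of the Kajiwara--Watatani C*-correspondence $E$ over $A$, the universal property of the Cuntz--Pimsner construction furnishes a canonical $*$-homomorphism $\O_E \to C^*(\widehat\rho)$ sending the generators coming from the universal covariant representation to $\widehat\rho_A(a)$ and $\widehat\rho_E(f)$. Recall that, by the definition given just before Fact~\ref{2.1.3.1}, $\O_E = \O_R(X)$ here.

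To show this canonical map is an isomorphism, I would invoke Fact~\ref{2.1.3.4}, whose hypotheses are precisely that $\widehat\rho$ is covariant, injective, and admits a gauge action. All three conditions have been verified in Lemma~\ref{3.7.2}: injectivity of $\widehat\rho_A$ was deduced from the injectivity of $\rho_A$ (Proposition~\ref{3.1.1}); covariance was deduced from Proposition~\ref{2.1.3.2} combined with the uniqueness statement of Lemma~\ref{3.7.1}; and the gauge action for $c \in \T$ was produced as $\id \otimes \Phi_c$, where $\Phi_c$ is the $*$-automorphism of $C^*_r(\Z)$ introduced in sub-subsection~\ref{s2.1.5}. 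Consequently the canonical $*$-homomorphism $\O_R(X) \to C^*(\widehat\rho)$ is an isomorphism, which is exactly the statement that $\widehat\rho$ is universal as a covariant representation and that $C^*(\widehat\rho) \cong \O_R(X)$.

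There is essentially no obstacle here beyond correctly quoting the previously established facts; the entire content has been compressed into Lemma~\ref{3.7.2}, and Fact~\ref{2.1.3.4} delivers the conclusion immediately. The only thing worth spelling out in the written proof is the observation that the isomorphism $\O_R(X) \to C^*(\widehat\rho)$ being surjective implies in particular that $\widehat\rho_A(A) \cup \widehat\rho_E(E)$ generates $C^*(\widehat\rho)$ as the image of the universal generators, so that the identification is canonical with respect to the structure maps $\widehat\rho_A$ and $\widehat\rho_E$.
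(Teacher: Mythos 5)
Your proof is correct and follows exactly the paper's route: the paper also deduces the lemma immediately from Fact~\ref{2.1.3.4} together with Lemma~\ref{3.7.2}, which supplies injectivity, covariance, and the gauge action. Your additional remarks merely spell out what the paper leaves implicit.
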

\begin{proof}
  By Lemma~\ref{3.7.2}, the representation~$\widehat\rho$ is injective and covariant and also admits a gauge action; 
  and hence by Fact~\ref{2.1.3.4}, the natural *-homomorphism $\O_R(X)\to C^*(\widehat\rho)$ is an isomorphism.
\end{proof}

\subsection{The universality of $\rho$}

By the universality of~$\widehat\rho$ (Lemma~\ref{3.7.3}), we obtain the natural $*$-homomorphism from~$C^*(\widehat\rho)$ to~$C^*(\rho)$, denoted by~$\nu$.

For each~$n\in\Z$, we define $\mathcal{E}_n$, $\mathcal{X}_n$, $F_b(\mathcal{X}_n)$, and~$\ent_n$ as below.
\begin{itemize}
  \item $\mathcal{E}_n$ is the closure of the range of~$\id\otimes\delta_n$ included in~$C^*(\rho)$. This is a Banach subspace of~$C^*(\rho)$.
  \item $\mathcal{X}_n$ is the union of~$\X_{i,j}$'s, where the indices $i$ and $j$ run over~$\N\sqcup\{0\}$ such that~$i-j=n$. By Proposition~\ref{3.3.5}, $\mathcal{X}_n$'s are pairwise disjoint.
  \item $F_b(\mathcal{X}_n)$ is the Banach space of all bounded functions on~$\mathcal{X}_n$ with supremum norm.
  \item $\ent_n$ is the map from~$C^*(\rho)$ to~$F_b(\mathcal{X}_n)$ which maps $\alpha$ to the function on~$\mathcal{X}_n$ which values the $(x,y)$-entry of~$\alpha$ at~$(x,y)\in\mathcal{X}_n$.
\end{itemize}

\begin{proposition}\label{3.8.1}
  $C^*(\rho)$ coincides with the closure of the additive subgroup of~$C^*(\rho)$ generated by the union of~$E_{m,n}$'s, where the indices $m$ and $n$ run over~$\N\sqcup\{0\}$.
\end{proposition}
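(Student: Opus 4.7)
The approach is to let~$B$ denote the closure of the additive subgroup generated by $\bigcup_{m,n\in\N\sqcup\{0\}}E_{m,n}$ and to show that $B$ is a closed $*$-subalgebra of~$C^*(\rho)$ containing the generating set $\rho_A(A)\cup\rho_E(E)$; by the minimality of~$C^*(\rho)$ and the inclusion $E_{m,n}\subseteq C^*(\rho)$ provided by Proposition~\ref{3.6.4}, this gives $B=C^*(\rho)$. The generators lie in~$B$ trivially, since $\rho_A(A)=E_0=E_{0,0}$ and $\rho_E(E)=E_1=E_{1,0}$. Closure of~$B$ under the adjoint is immediate from Proposition~\ref{3.6.5}(4): the adjoint of a typical generator $\rho_E(f_1)\cdots\rho_E(f_m)\rho_E(g_1)^*\cdots\rho_E(g_n)^*$ of~$E_{m,n}$ is of the same shape with the roles of~$m$ and~$n$ swapped, so $E_{m,n}^*\subseteq E_{n,m}\subseteq B$.

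The substantive step is closure of~$B$ under multiplication, whose heart is the collapsing identity
\begin{equation*}
  \rho_E(f)^*\rho_E(g)=\rho_A(\inn{f}{g})\qquad(f,g\in E).
\end{equation*}
I would verify this by a direct calculation of the $(x,y)$-entry of the left-hand side on $\X\times\X$: because $\X$ is completely invariant under~$R$ and contains no branch points, every $R$-preimage in~$\X$ of a point of~$\X$ has branch index one, so the sum over preimages agrees exactly with the value of the Kajiwara--Watatani inner product at the common image. Combined with the elementary identities $\rho_A(a)\rho_E(g)=\rho_E(a\cdot g)$ and $\rho_E(f)\rho_A(a)=\rho_E(f\cdot a)$ (and their adjoints), this lets one inductively reduce any alternating product
\begin{equation*}
  \rho_E(f_1)^*\cdots\rho_E(f_n)^*\rho_E(g_1)\cdots\rho_E(g_m)
\end{equation*}
by repeatedly collapsing the innermost pair into a $\rho_A$-factor and absorbing it into an adjacent $\rho_E$-factor; the outcome lies in $E_0=E_{0,0}$, $E_1^{m-n}$, or $E_1^{*(n-m)}$ according as $m=n$, $m>n$, or $m<n$.

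With this reduction in hand, any product drawn from $E_1^aE_1^{*b}\cdot E_1^cE_1^{*d}$ has its central block $E_1^{*b}E_1^c$ collapse, and the whole product ends up in some $E_1^pE_1^{*q}\subseteq E_{p,q}$ by Proposition~\ref{3.6.5}(4). Since multiplication in~$M_\X$ is continuous and~$E_{p,q}$ is closed, the same containment $E_{a,b}\cdot E_{c,d}\subseteq E_{p,q}\subseteq B$ persists in the limit; distributivity then gives closure of~$B$ under multiplication. The only real obstacle is the collapsing identity together with careful bookkeeping of indices in the iterative reduction; once these are in place, the rest of the argument is mechanical.
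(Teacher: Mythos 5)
Your proposal is correct and follows essentially the same route as the paper: both rest on the collapsing identity $\rho_E(f)^*\rho_E(g)=\rho_A(\inn{f}{g})$ (i.e.\ $E_1^*E_1\subseteq E_0$) to show that the additive span of the monomials $E_0$, $E_1^m$, $E_1^{*n}$, $E_1^mE_1^{*n}$ is a $*$-subalgebra containing $\rho_A(A)\cup\rho_E(E)$, and then invoke Propositions~\ref{3.6.4} and~\ref{3.6.5} to identify its closure with the closure of the span of the $E_{m,n}$'s. The only difference is cosmetic: the paper takes $E_1^*E_1\subseteq E_0$ for granted as condition (1) in the definition of a representation (the paper has already asserted that $\rho$ is a representation), whereas you re-derive it entrywise via the absence of branch points in~$\X$.
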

\begin{proof}
  Since $E_1^*E_1=\rho_E(E)^*\rho_E(E)$ is included in $E_0=\rho_A(A)$, the union of $E_0$, ${E_1}^m$'s, $({E_1}^n)^*$'s and $({E_1}^m)({E_1}^n)^*$'s is closed under multiplication and involution.
  Therefore, the algebraic additive subgroup of~$C^*(\rho)$ generated by the union is a $*$-subalgebra of~$C^*(\rho)$ including $E_0\cup E_1=\rho_A(A)\cup\rho_E(E)$, and its closure coincides with~$C^*(\rho)$.
  By Proposition~\ref{3.6.4} and~\ref{3.6.5}, the closure of the algebraic additive subgroup of~$C^*(\rho)$ generated by the union of~$E_{m,n}$'s also coincides with~$C^*(\rho)$.
\end{proof}

\begin{proposition}\label{3.8.2}
  $C^*(\widehat\rho)$ coincides with the closure of the additive subgroup of~$C^*(\widehat\rho)$ generated by the union of~$E_{m,n}\otimes\upsilon_{m-n}$'s, where the indices $m$ and $n$ run over~$\N\sqcup\{0\}$.
\end{proposition}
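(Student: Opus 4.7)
The plan is to adapt the proof of Proposition~\ref{3.8.1}, attaching the appropriate $\upsilon$-factor to each piece. Set $\mathcal{S}:=\bigcup_{m,n\in\N\sqcup\{0\}} E_{m,n}\otimes\upsilon_{m-n}$ and let $\mathcal{G}$ denote the closure of the additive subgroup of $C^*(\rho)\otimes C^*_r(\Z)$ generated by $\mathcal{S}$. I aim to show both inclusions between $C^*(\widehat\rho)$ and $\mathcal{G}$.

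For $C^*(\widehat\rho)\subseteq\mathcal{G}$, first observe that the generators lie in $\mathcal{S}$: $\widehat\rho_A(a)=\rho_A(a)\otimes\upsilon_0\in E_{0,0}\otimes\upsilon_0$ and $\widehat\rho_E(f)=\rho_E(f)\otimes\upsilon_1\in E_{1,0}\otimes\upsilon_1$ by Proposition~\ref{3.6.5}. It thus suffices to verify that $\mathcal{G}$ is a $*$-subalgebra. Closure under involution is immediate from $E_{m,n}^*=E_{n,m}$ (clear from the definition of $\rho_{m,n}$ together with Proposition~\ref{3.6.4}) and $\upsilon_{m-n}^*=\upsilon_{n-m}$. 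For closure under multiplication I mirror the reasoning in Proposition~\ref{3.8.1}: by Proposition~\ref{3.6.5}(4), each $E_{m,n}$ is the closed linear span of $E_1^mE_1^{*n}$, so it is enough to handle a product
\[
\bigl(\rho_E(f_1)\cdots\rho_E(f_m)\rho_E(g_1)^*\cdots\rho_E(g_n)^*\bigr)\bigl(\rho_E(f_1')\cdots\rho_E(f_j')\rho_E(g_1')^*\cdots\rho_E(g_k')^*\bigr).
\]
Repeated application of $\rho_E(g)^*\rho_E(f)=\rho_A(\inn{g}{f})\in E_0$ together with $\rho_A(a)\rho_E(f)=\rho_E(af)$ collapses this to a finite sum of elements of $E_1^{p}E_1^{*q}\subseteq E_{p,q}$ with $p-q=(m-n)+(j-k)$, and the attached shift factors satisfy $\upsilon_{m-n}\upsilon_{j-k}=\upsilon_{p-q}$, so the product lies in $E_{p,q}\otimes\upsilon_{p-q}\subseteq\mathcal{S}$.

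For the reverse inclusion $\mathcal{G}\subseteq C^*(\widehat\rho)$, note that every element of the form $\widehat\rho_E(f_1)\cdots\widehat\rho_E(f_m)\widehat\rho_E(g_1)^*\cdots\widehat\rho_E(g_n)^*$ equals $\rho_E(f_1)\cdots\rho_E(f_m)\rho_E(g_1)^*\cdots\rho_E(g_n)^*\otimes\upsilon_{m-n}$ and so lies in $C^*(\widehat\rho)$; by Proposition~\ref{3.6.5}(4), the closed linear span of such elements (for fixed $m,n$) is exactly $E_{m,n}\otimes\upsilon_{m-n}$. Hence each summand of $\mathcal{S}$ sits inside $C^*(\widehat\rho)$, and $\mathcal{G}\subseteq C^*(\widehat\rho)$ follows by closure. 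I expect the only real nuisance to be the bookkeeping in the multiplicative closure step, but because the identical bookkeeping already underlies Proposition~\ref{3.8.1}, no genuinely new idea is required beyond recording the $\upsilon$-exponent alongside the algebraic manipulation.
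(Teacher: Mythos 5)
Your proof is correct and takes essentially the same route as the paper: the paper's own proof just notes via Proposition~\ref{3.6.5} that the $E_{m,n}\otimes\upsilon_{m-n}$'s lie in $C^*(\widehat\rho)$ and then invokes ``the similar argument as in the proof of Proposition~\ref{3.8.1}'', which is exactly the multiplicative/involutive closure bookkeeping (collapsing via $E_1^*E_1\subseteq E_0$ while tracking the $\upsilon$-exponent) that you have written out explicitly.
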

\begin{proof}
  By Proposition~\ref{3.6.5}, $E_{m,n}\otimes\upsilon_{m-n}$'s are included in~$C^*(\widehat\rho)$.
  Therefore, we can apply the similar argument as in the proof of Proposition~\ref{3.8.1}.
\end{proof}

\begin{proposition}\label{3.8.3}
  Let $n\in\Z$.
  $\mathcal{E}_n$ is the closure of the additive subgroup of~$C^*(\rho)$ generated by the union of~$E_{i,j}$'s, where the indices $i$ and $j$ run over~$\N\sqcup\{0\}$ such that~$i-j=n$.
\end{proposition}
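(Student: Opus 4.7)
The plan is to reduce directly to Proposition~\ref{3.8.2} by applying the bounded linear map $\id\otimes\delta_n\colon C^*(\rho)\otimes C^*_r(\Z)\to C^*(\rho)$ and computing its action on the generating elements described there.

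First I would pin down what $\delta_n$ does on the generators of $C^*_r(\Z)$. By the definition in subsection~\ref{s2.1.5}, $\delta_n(\alpha)=\delta_\Z(\alpha\upsilon_{-n})$, and $\delta_\Z$ sends $\upsilon_0$ to $1$ and $\upsilon_k$ to $0$ for $k\neq 0$. Hence $\delta_n(\upsilon_k)=\delta_\Z(\upsilon_{k-n})$ equals $1$ if $k=n$ and $0$ otherwise. Consequently, for any $\xi\in E_{i,j}$, the element $(\id\otimes\delta_n)(\xi\otimes\upsilon_{i-j})$ equals $\xi$ when $i-j=n$ and vanishes otherwise.

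Next, let $G$ denote the algebraic additive subgroup of $C^*(\widehat\rho)$ generated by the union of the $E_{i,j}\otimes\upsilon_{i-j}$ for $i,j\in\N\sqcup\{0\}$, and let $H$ denote the algebraic additive subgroup of $C^*(\rho)$ generated by $\bigcup_{i-j=n}E_{i,j}$. Proposition~\ref{3.8.2} gives $\overline{G}=C^*(\widehat\rho)$, and the calculation above shows $(\id\otimes\delta_n)(G)=H$, since the generators with $i-j\neq n$ die and those with $i-j=n$ map identically onto the corresponding $E_{i,j}$.

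Finally, I would combine these two facts with the continuity of $\id\otimes\delta_n$. One inclusion is immediate: $H=(\id\otimes\delta_n)(G)\subseteq(\id\otimes\delta_n)(C^*(\widehat\rho))\subseteq\mathcal{E}_n$, so $\overline{H}\subseteq\mathcal{E}_n$. For the reverse inclusion, by continuity,
\begin{equation*}
  (\id\otimes\delta_n)(C^*(\widehat\rho))
  =(\id\otimes\delta_n)(\overline{G})
  \subseteq\overline{(\id\otimes\delta_n)(G)}
  =\overline{H},
\end{equation*}
and taking closures yields $\mathcal{E}_n\subseteq\overline{H}$. There is no serious obstacle; the argument is purely formal once the action of $\delta_n$ on $\upsilon_k$ is nailed down, and the only mild subtlety is making sure to index the generators of Proposition~\ref{3.8.2} so that exactly the summands with $i-j=n$ survive under $\id\otimes\delta_n$.
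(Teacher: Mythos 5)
Your argument is correct and follows essentially the same route as the paper: apply the slice map $\id\otimes\delta_n$ to the generating set of $C^*(\widehat\rho)$ from Proposition~\ref{3.8.2}, note that exactly the summands with $i-j=n$ survive, and conclude by continuity. You merely spell out the two inclusions (and work with additive subgroups directly) where the paper compresses the same reasoning into a couple of sentences.
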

\begin{proof}
  The operator~$\id\otimes\delta_n$ maps any element $\alpha\otimes\upsilon_{i-j}\in E_{i,j}\otimes\upsilon_{i-j}$ to~$\alpha$ if~$i-j=n$ and to zero otherwise.
  Therefore, by Proposition~\ref{3.8.2}, $\mathcal{E}_n$ coincides with the closed linear span of~$E_{i,j}$'s, where the indices $i$ and $j$ run over~$\N\sqcup\{0\}$ such that~$i-j=n$.
  Since $E_{i,j}$'s are closed under scalar-multiplication, the span can be described as the closure of the additive subgroup of~$C^*(\rho)$ generated by such~$E_{i,j}$'s.
\end{proof}

\begin{proposition}\label{3.8.4}
  Let $n\in\Z$, $x,y\in\X$, and $\alpha\in\mathcal{E}_n$.
  Then, $x\rel\alpha y$ implies that $(x,y)\in\mathcal{X}_n$.
\end{proposition}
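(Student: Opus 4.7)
The plan is to exploit Proposition~\ref{3.8.3}, which describes $\mathcal{E}_n$ as the closure of the additive subgroup generated by $E_{i,j}$'s with $i-j=n$, and then to lift the desired support property from generators through finite sums and finally through norm limits via continuity of entries. I will argue contrapositively: if $(x,y)\notin\mathcal{X}_n$, I will show $\inn{\e_x}{\alpha\e_y}=0$ for every $\alpha\in\mathcal{E}_n$.

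First I would handle a single generator. If $\alpha\in E_{i,j}$ with $i-j=n$, then $\alpha=\rho_{i,j}(f)$ for some $f\in C_0(X_{i,j})$, and by the defining formula for $\rho_{i,j}$ the $(x,y)$-entry equals $f(x,y)$ when $(x,y)\in\X_{i,j}$ and $0$ otherwise. Since $\X_{i,j}\subseteq\mathcal{X}_n$, the hypothesis $(x,y)\notin\mathcal{X}_n$ forces $\inn{\e_x}{\alpha\e_y}=0$.

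Next I would extend to finite sums using Proposition~\ref{2.1.2.4}(1): if $\alpha=\alpha_1+\cdots+\alpha_k$ with each $\alpha_s\in E_{i_s,j_s}$, $i_s-j_s=n$, then $x\rel\alpha y$ would force $x\rel{\alpha_s}y$ for some $s$, contradicting the previous step. Hence every element of the algebraic additive subgroup generated by the $E_{i,j}$'s (with $i-j=n$) has vanishing $(x,y)$-entry.

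Finally, for a general $\alpha\in\mathcal{E}_n$, Proposition~\ref{3.8.3} gives a sequence $\alpha^{(k)}$ of such finite sums with $\alpha^{(k)}\to\alpha$ in operator norm. The entry functional $\beta\mapsto\inn{\e_x}{\beta\e_y}$ is bounded on $M_\X$ (indeed $\abs{\inn{\e_x}{\beta\e_y}}\leq\norm{\beta}$), so $\inn{\e_x}{\alpha^{(k)}\e_y}\to\inn{\e_x}{\alpha\e_y}$; since the left-hand side is identically zero, so is the limit. This yields $\lnot(x\rel\alpha y)$, completing the contrapositive. The argument is essentially bookkeeping; the only non-formal ingredient is continuity of entries under the operator norm, and there is no substantive obstacle once Proposition~\ref{3.8.3} is in hand.
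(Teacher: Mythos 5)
Your proposal is correct and follows essentially the same route as the paper: prove the contrapositive, observe that the $(x,y)$-entry vanishes on each $E_{i,j}$ with $i-j=n$ because $\X_{i,j}\subseteq\mathcal{X}_n$, and then pass to the dense additive subgroup from Proposition~\ref{3.8.3} and to its closure using the boundedness of the entry functional $\beta\mapsto\inn{\e_x}{\beta\e_y}$. The paper compresses the finite-sum and limit steps into one appeal to density plus continuity of that functional, but the content is identical.
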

\begin{proof}
  We prove the contrapositive.
  Assume that $(x,y)\not\in\mathcal{X}_n$.
  Then, the $(x,y)$-entry of an $\X$-squared matrix belonging to some~$E_{i,j}$ with~$i-j=n$ values zero.
  This means that the bounded linear functional on~$M_\X$ which maps~$\alpha$ to its $(x,y)$-entry vanishes on the dense subset of~$\mathcal{E}_n$; hence the functional vanishes on the whole of~$\mathcal{E}_n$.
  In other words, $x\rel\alpha y$ does not hold for any $\alpha\in\mathcal{E}_n$.
\end{proof}

\begin{proposition}\label{3.8.5}
  Let $n\in\Z$.
  The restriction of~$\ent_n$ to~$\mathcal{E}_n$ is injective.
\end{proposition}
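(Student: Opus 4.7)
The plan is to show this essentially as a direct corollary of Proposition~\ref{3.8.4} together with Proposition~\ref{2.1.4.1}, which says that an $\X$-squared matrix (the case $B=\C$) with all entries vanishing is the zero matrix.

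Suppose $\alpha\in\mathcal{E}_n$ with $\ent_n(\alpha)=0$. First I would observe that by Proposition~\ref{3.8.4}, for any $x,y\in\X$, the relation $x\rel\alpha y$ forces $(x,y)\in\mathcal{X}_n$. Equivalently, for every $(x,y)\in(\X\times\X)\setminus\mathcal{X}_n$, the $(x,y)$-entry of $\alpha$ vanishes. Second, the hypothesis $\ent_n(\alpha)=0$ says that the $(x,y)$-entry of $\alpha$ vanishes for every $(x,y)\in\mathcal{X}_n$. Together these two facts imply that every entry of the $\X$-squared matrix $\alpha$ is zero.

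I would then invoke Proposition~\ref{2.1.4.1} (applied with $B=\C$ and $Z=\X$, so that $\H_\X(\C)=\H_\X$ and $M_\X(\C)=M_\X$) to conclude $\alpha=0$. Hence $\ent_n$ restricted to $\mathcal{E}_n$ has trivial kernel, which is the required injectivity.

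There is really no substantial obstacle here: all the work has been done in Proposition~\ref{3.8.4}, whose content is precisely that elements of $\mathcal{E}_n$ are supported on $\mathcal{X}_n$, and once one has this, the claim is the tautology that an element supported on a set is determined by its values on that set.
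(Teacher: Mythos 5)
Your proposal is correct and follows exactly the paper's own argument: the hypothesis kills the entries on $\mathcal{X}_n$, Proposition~\ref{3.8.4} kills the entries off $\mathcal{X}_n$, and Proposition~\ref{2.1.4.1} then forces $\alpha=0$. Nothing to add.
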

\begin{proof}
  Take any $\X$-squared matrix~$\alpha$ belonging to the kernel of~$\ent_n$.
  Since $\ent_n(\alpha)=0$, the $(x,y)$-entry of~$\alpha$ vanishes for any~$(x,y)\in\mathcal{X}_n$.
  By Proposition~\ref{3.8.4}, the other entries also vanishes.
  Therefore, all entries of~$\alpha$ values zero.
  By Proposition~\ref{2.1.4.1}, such matrix is only zero.
\end{proof}

\begin{proposition}\label{3.8.6}
  Let $n\in\Z$.
  The following diagram is commutative.
  \[
    \xymatrix@M=8pt{
      C^*(\widehat\rho)
        \ar[r]^{\nu}
        \ar[d]_{\id\otimes\delta_n}
      & C^*(\rho)
        \ar[d]^{\mathrm{ent}_n}
      \\
      \mathcal{E}_n
        \ar@{>->}[r]_{\mathrm{ent}_n}
      & F_b(\mathcal{X}_n)
    }
  \]
\end{proposition}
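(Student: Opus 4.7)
The plan is to verify the commutativity by continuity on a dense subset. By Proposition~\ref{3.8.2}, $C^*(\widehat\rho)$ is the closure of the additive subgroup generated by $\bigcup_{i,j\in\N\sqcup\{0\}}E_{i,j}\otimes\upsilon_{i-j}$. All three maps involved are continuous: $\nu$ is a $*$-homomorphism, $\id\otimes\delta_n$ is a bounded operator, and $\ent_n$ is bounded on~$M_\X$ because each $(x,y)$-entry functional $\alpha\mapsto\inn{\e_x}{\alpha\e_y}$ has norm at most~$1$. Therefore it suffices to check the identity $\ent_n(\nu(\xi))=\ent_n((\id\otimes\delta_n)(\xi))$ for generators $\xi=\alpha\otimes\upsilon_{i-j}$ with $\alpha\in E_{i,j}$.

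On such a generator, one side is immediate: $(\id\otimes\delta_n)(\alpha\otimes\upsilon_{i-j})=\alpha\cdot\delta_n(\upsilon_{i-j})$ equals $\alpha$ if $i-j=n$ and $0$ otherwise. For the other side, I claim that $\nu(\alpha\otimes\upsilon_{i-j})=\alpha$. Indeed, by Proposition~\ref{3.6.5}, $E_{i,j}$ is the closed linear span of products of the form $\rho_E(f_1)\cdots\rho_E(f_i)\rho_E(g_1)^*\cdots\rho_E(g_j)^*$, and tensoring with $\upsilon_{i-j}$ lifts such a product to $\widehat\rho_E(f_1)\cdots\widehat\rho_E(f_i)\widehat\rho_E(g_1)^*\cdots\widehat\rho_E(g_j)^*\in C^*(\widehat\rho)$. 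Since the intertwining $\nu\circ\widehat\rho_E=\rho_E$ coming from Lemma~\ref{3.7.3} recovers the original product, the claim follows by linearity and continuity.

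In the case $i-j=n$, both compositions yield $\ent_n(\alpha)$. In the case $i-j\neq n$, the composition through $\id\otimes\delta_n$ returns $\ent_n(0)=0$, so it remains to check that $\ent_n(\alpha)$ itself vanishes identically on~$\mathcal{X}_n$. This is the one step that uses something non-trivial: since $\alpha\in E_{i,j}$, every entry off $\X_{i,j}$ is zero, while Proposition~\ref{3.3.5}(1) guarantees that $\X_{i,j}$ is disjoint from each $\X_{k,l}$ with $k-l\neq i-j$, so $\X_{i,j}\cap\mathcal{X}_n=\emptyset$. Hence every $(x,y)\in\mathcal{X}_n$ has trivial entry in~$\alpha$, and $\ent_n(\alpha)$ restricts to zero on~$\mathcal{X}_n$. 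This off-diagonal vanishing, combined with the density reduction from Proposition~\ref{3.8.2}, is the essential ingredient; the remainder is bookkeeping, and I do not anticipate a genuine obstacle beyond correctly tracking how $\nu$ acts on the algebraic generators.
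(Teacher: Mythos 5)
Your proof is correct and follows essentially the same route as the paper: reduce by Proposition~\ref{3.8.2} and continuity to the generators $\alpha\otimes\upsilon_{i-j}$ with $\alpha\in E_{i,j}$, observe that $\nu$ strips the $\upsilon$ factor while $\id\otimes\delta_n$ kills the summand unless $i-j=n$, and invoke Proposition~\ref{3.3.5} to see that the entries of $\alpha$ vanish on $\mathcal{X}_n$ when $i-j\neq n$. If anything, you make explicit the step $\nu(\alpha\otimes\upsilon_{i-j})=\alpha$ that the paper leaves implicit.
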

\begin{proof}
  Let $i,j\in\N\sqcup\{0\}$ and $\alpha\in E_{i,j}$.
  For any~$(x,y)\in\mathcal{X}_n$, we have the following.
  \begin{equation*}
    \ent_n(\nu(\alpha\otimes\upsilon_{i-j}))(x,y)
    =\ent_n(\alpha)(x,y).
  \end{equation*}
  \begin{align*}
    \ent_n(\id\otimes\delta_n(\alpha\otimes\upsilon_{i-j}))(x,y)
    &=\ent_n(\id(\alpha)\cdot\delta_n(\upsilon_{i-j}))(x,y)\\
    &=\begin{cases}
      \ent_n(\alpha\cdot 1)(x,y)&\text{if $i-j=n$}\\
      \ent_n(\alpha\cdot 0)(x,y)&\text{otherwise}
    \end{cases}\\
    &=\begin{cases}
      \ent_n(\alpha)(x,y)&\text{if $i-j=n$}\\
      0&\text{otherwise}.
    \end{cases}
  \end{align*}
  By Proposition~\ref{3.3.5}, the $(x,y)$-entry of~$\alpha$ vanishes if~$i-j\neq n$.
  Therefore, we have the equation
  \begin{equation*}
    \ent_n(\nu(\alpha\otimes\upsilon_{i-j}))(x,y)
    =\ent_n(\id\otimes\delta_n(\alpha\otimes\upsilon_{i-j}))(x,y).
  \end{equation*}
  By the above, for each~$i,j\in\N\sqcup\{0\}$, the equation ${\ent_n}\circ\nu={\ent_n}\circ({\id}\otimes\delta_n)$ holds on~$E_{i,j}\otimes\upsilon_{i-j}$ and hence, by Proposition~\ref{3.8.2}, on the whole of~$C^*(\widehat\rho)$, too.
\end{proof}

\begin{lemma}\label{3.8.7}
  The $*$-homomorphism~$\nu$ is isometry.
\end{lemma}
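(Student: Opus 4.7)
The plan is to show that $\nu$ is injective, which for a $*$-homomorphism between C*-algebras automatically upgrades to being isometric. So the only real content is injectivity, and the machinery for this has been essentially set up in Propositions~\ref{3.8.5} and~\ref{3.8.6} together with Proposition~\ref{2.1.5.1}.

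Concretely, I would take $\xi\in C^*(\widehat\rho)\subseteq C^*(\rho)\otimes C^*_r(\Z)$ with $\nu(\xi)=0$ and aim to conclude $\xi=0$. For each $n\in\Z$, applying the commutative diagram of Proposition~\ref{3.8.6} gives
\begin{equation*}
  \ent_n\bigl((\id\otimes\delta_n)(\xi)\bigr)
  =\ent_n(\nu(\xi))
  =\ent_n(0)
  =0.
\end{equation*}
Now $(\id\otimes\delta_n)(\xi)$ lies in $\mathcal{E}_n$, and by Proposition~\ref{3.8.5} the restriction of $\ent_n$ to $\mathcal{E}_n$ is injective; hence $(\id\otimes\delta_n)(\xi)=0$ for every $n\in\Z$.

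Having verified that all the ``Fourier coefficients'' of $\xi$ vanish, I would invoke Proposition~\ref{2.1.5.1} applied with $B=C^*(\rho)$ to conclude $\xi=0$. This shows $\ker\nu=0$, so $\nu$ is injective, and therefore isometric.

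I do not expect any genuine obstacle here: each of the three ingredients (the commuting square relating $\id\otimes\delta_n$ and $\ent_n$, the entry-wise injectivity on $\mathcal{E}_n$, and the ``uniqueness of Fourier coefficients'' in $C^*(\rho)\otimes C^*_r(\Z)$) has already been proved, and the argument is just a diagram chase combining them. The only thing to be careful about is making sure $\xi$ is indeed treated as an element of the larger algebra $C^*(\rho)\otimes C^*_r(\Z)$ so that Proposition~\ref{2.1.5.1} applies, but this is automatic from the construction of $\widehat\rho$ in Section~3.7.
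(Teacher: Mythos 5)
Your proposal is correct and follows exactly the paper's own argument: apply the commuting square of Proposition~\ref{3.8.6} to see each $(\id\otimes\delta_n)(\xi)$ is killed by $\ent_n$, use Proposition~\ref{3.8.5} to conclude each vanishes, and finish with Proposition~\ref{2.1.5.1}. No differences worth noting.
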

\begin{proof}
  It suffices to show the injectivity of~$\nu$.
  Take any element~$\xi$ of the kernel of~$\nu$.
  By Proposition~\ref{3.8.6}, we have
  \begin{equation*}
    \ent_n(\id\otimes\delta_n(\xi))
    =\ent_n(\nu(\xi))
    =\ent_n(0)
    =0.
  \end{equation*}
  By Proposition~\ref{3.8.5}, we obtain that~$\id\otimes\delta_n(\xi)=0$.
  This holds for any~$n\in\Z$.
  Therefore, by applying Proposition~\ref{2.1.5.1}, we have~$\xi=0$.
\end{proof}

\begin{proposition}\label{3.8.8}
  The representation~$\rho$ is universal as a covariant representation and $C^*(\rho)$ is isomorphic to~$\O_R(X)$.
\end{proposition}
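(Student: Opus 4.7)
The plan is to identify $\nu$ as an isomorphism and then transport the universal property from $\widehat\rho$ to $\rho$. By the universality of $\widehat\rho$ established in Lemma~\ref{3.7.3}, the natural $*$-homomorphism $\nu\colon C^*(\widehat\rho)\to C^*(\rho)$ satisfies $\nu(\widehat\rho_A(a))=\rho_A(a)$ and $\nu(\widehat\rho_E(f))=\rho_E(f)$, so its image contains the generating set $\rho_A(A)\cup\rho_E(E)$ of $C^*(\rho)$ and is therefore surjective. Lemma~\ref{3.8.7} already tells us that $\nu$ is an isometry, hence injective. Combining these, $\nu$ is a $*$-isomorphism.

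Next, I would use this isomorphism together with the isomorphism $C^*(\widehat\rho)\cong\O_R(X)$ from Lemma~\ref{3.7.3} to conclude $C^*(\rho)\cong\O_R(X)$. For the universality of $\rho$ as a covariant representation: given any covariant representation $\sigma=(\sigma_A,\sigma_E)$ of $E$, the universality of $\widehat\rho$ supplies a $*$-homomorphism $\mu\colon C^*(\widehat\rho)\to C^*(\sigma)$ sending $\widehat\rho_A(a)\mapsto\sigma_A(a)$ and $\widehat\rho_E(f)\mapsto\sigma_E(f)$. Composing with $\nu^{-1}$ yields a $*$-homomorphism $C^*(\rho)\to C^*(\sigma)$ which sends $\rho_A(a)\mapsto\sigma_A(a)$ and $\rho_E(f)\mapsto\sigma_E(f)$, and uniqueness is immediate since these generators determine the map. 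Since $\rho$ itself is a covariant representation by Proposition~\ref{3.1.3}, this establishes its universality.

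The genuinely substantive ingredient here is the injectivity (equivalently, isometry) of $\nu$, which is Lemma~\ref{3.8.7}; everything else in the proposition reduces to bookkeeping with the universal property of $\widehat\rho$. There is no real additional obstacle at this point, since the structural work has already been carried out in Sections~3.7 and~3.8 (the commutative diagram of Proposition~\ref{3.8.6} together with the entrywise injectivity of Proposition~\ref{3.8.5} and the faithfulness on each spectral subspace via Proposition~\ref{2.1.5.1}).
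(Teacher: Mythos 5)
Your proposal is correct and follows the same route as the paper, whose proof simply cites Lemma~\ref{3.7.3} and Lemma~\ref{3.8.7}; you have merely spelled out the routine bookkeeping (surjectivity of~$\nu$ from the generators, injectivity from the isometry, transport of the universal property) that the paper leaves implicit.
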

\begin{proof}
  It follows from Lemma~\ref{3.7.3} and~\ref{3.8.7}.
\end{proof}

\section{Is $C_0(X)$ a Cartan Subalgebra of $\O_R(X)$?}
The definition of a Cartan subalgebra of a C*-algebra was introduced in~\cite{R}.

\begin{definition}[\cite{R}, Definition 4.5]\label{4.1}
  Let $A$ be a sub C*-algebra of a C*-algebra~$B$.
  
  \hangindent=2.5em
  (1) Its \emph{normalizer} is the set
  \begin{equation*}
    N(A):=\SET{}{b\in B}{\text{$bAb^*\subseteq A$ and $b^*Ab\subseteq A$}}.
  \end{equation*}
  
  \hangindent=2.5em
  (2) One says that $A$ is \emph{regular} if its normalizer~$N(A)$ generates $B$ as a C*-algebra.
\end{definition}

\begin{definition}[\cite{R}, Definition 5.1]\label{4.2}
  We shall say that an abelian sub-C*-algebra~$A$ of a C*-algebra~$B$ is a \emph{Cartan subalgebra} if

  \hangindent=2.5em
  (1) $A$ contains an approximate unit of~$B$;

  \hangindent=2.5em
  (2) $A$ is maximal abelian;

  \hangindent=2.5em
  (3) $A$ is regular;

  \hangindent=2.5em
  (4) there exists a faithful conditional expectation~$P$ of~$B$ onto~$A$.
\end{definition}

In this section, the normalizer of~$\rho_A(A)$ in~$C^*(\rho)$ is simply denoted by~$N$. Let $C^*(N)$ denote the C*-subalgebra of~$C^*(\rho)$ generated by~$N$.

\begin{definition}\label{4.3}
  Let $m\in\N\sqcup\{0\}$ and $\alpha$ be an $\X$-squared matrix.
  The function on~$\X_{m,0}$ which sends $(x,y)$ to the $(x,y)$-entry of~$\alpha$ is called \emph{the $m$-th entry function for~$\alpha$}.
  Also, its continuous extension to~$X_{m,0}$, if exists, is called \emph{the $m$-th extended entry function}.
\end{definition}

\begin{proposition}\label{4.4}
  Let $m\in\N\sqcup\{0\}$.
  Every $\X$-squared matrix belonging to~$C^*(\rho)$ admits the $m$-th extended entry function, which vanishes at infinity.
\end{proposition}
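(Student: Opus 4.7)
The plan is to establish the conclusion first on the dense additive subgroup of $C^*(\rho)$ generated by $\bigcup_{i,j\in\N\sqcup\{0\}}E_{i,j}$ (Proposition~\ref{3.8.1}), then pass to norm limits. The dichotomy from Proposition~\ref{3.3.5} will cleanly identify which terms $\rho_{i,j}(f)$ contribute to the $m$-th entry function on~$\X_{m,0}$, and for the contributing ones the extension is given simply by restriction of~$f$ to~$X_{m,0}$.

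In detail, I would first take a finite sum $\alpha=\sum_k\rho_{i_k,j_k}(f_k)$ and compute the $(x,y)$-entry for $(x,y)\in\X_{m,0}$. By Proposition~\ref{3.3.5}(1), the summand $\rho_{i_k,j_k}(f_k)$ has a nonzero entry at $(x,y)$ only if $i_k-j_k=m$; and for those indices, Proposition~\ref{3.3.5}(2) gives $X_{m,0}\subseteq X_{i_k,j_k}$, so $f_k$ is defined on $X_{m,0}$. Since $X_{m,0}$ and $X_{i_k,j_k}$ are both closed in $X\times X$ (as preimages of the diagonal under continuous maps), $X_{m,0}$ is closed in $X_{i_k,j_k}$, and therefore $f_k|_{X_{m,0}}\in C_0(X_{m,0})$. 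Hence
\[
  g_\alpha:=\sum_{k:\,i_k-j_k=m}f_k\big|_{X_{m,0}}\in C_0(X_{m,0})
\]
agrees on $\X_{m,0}$ with the $m$-th entry function of~$\alpha$, so it is an admissible extended entry function. The crucial estimate is $\norm{g_\alpha}_\infty\leq\norm{\alpha}_{\mathrm{op}}$: every matrix entry is bounded by the operator norm, so the estimate holds on $\X_{m,0}$, and by continuity of $g_\alpha$ together with the density of $\X_{m,0}$ in $X_{m,0}$ (Proposition~\ref{3.3.4}) it extends to the whole of~$X_{m,0}$. This bound guarantees well-definedness of the assignment $\alpha\mapsto g_\alpha$ on the additive subgroup, as different presentations of the same matrix yield the same $g_\alpha$ (their difference represents the zero matrix). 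To finish, given $\alpha\in C^*(\rho)$, approximate it in operator norm by $\alpha_l$ in the additive subgroup; the estimate makes $(g_{\alpha_l})$ Cauchy in $C_0(X_{m,0})$, its limit $g$ lies in $C_0(X_{m,0})$, and continuity of matrix entries in the operator norm implies $g|_{\X_{m,0}}$ is the $m$-th entry function of~$\alpha$.

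The main obstacle I expect is simply the bookkeeping of the reduction: one has to check carefully that the summands with $i_k-j_k\neq m$ contribute no entries on $\X_{m,0}$, that $X_{m,0}$ sits inside $X_{i_k,j_k}$ as a closed subset when $i_k-j_k=m$, and that the resulting restriction map is contractive into $C_0(X_{m,0})$. All of this, however, is an immediate unpacking of Propositions~\ref{3.3.5} and~\ref{3.3.4} together with the universal bound on matrix entries by the operator norm; no further deep input is needed beyond Proposition~\ref{3.8.1}.
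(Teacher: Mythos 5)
Your proof is correct and follows essentially the same route as the paper: identify, via Proposition~\ref{3.3.5}, which $E_{i,j}$ contribute on $\X_{m,0}$ and take the restriction to $X_{m,0}$ as the continuous extension. In fact you spell out more carefully than the paper does the passage from a single $E_{i,j}$ to the dense additive subgroup of Proposition~\ref{3.8.1} and then to norm limits via the bound of entries by the operator norm and the density of $\X_{m,0}$ in $X_{m,0}$ (Proposition~\ref{3.3.4}), all of which the paper leaves implicit.
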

\begin{proof}
  By Proposition~\ref{3.3.5}, the $m$-th entry function for an $\X$-squared matrix belonging to~$E_{i,j}$ is the restriction of a continuous function on~$X_{i,j}$ vanishing at infinity if~$i-j=m$ and the constant zero function otherwise.
  Therefore, in any case, it has a continuous extension to~$X_{m,0}$.
  Moreover, the extension vanishes at infinity.
\end{proof}

\begin{proposition}\label{4.5}
  $C^*(\rho)\cap D_\X=\rho_A(A)$.
\end{proposition}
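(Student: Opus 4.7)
The inclusion $\rho_A(A)\subseteq C^*(\rho)\cap D_\X$ is immediate: for $a\in A$ we have $\rho_A(a)\e_x=\e_x a(x)$, so the $(x,y)$-entry of $\rho_A(a)$ is $a(x)$ if $x=y$ and $0$ otherwise, making $\rho_A(a)$ a diagonal matrix. Thus the whole content of the proposition is the reverse inclusion $C^*(\rho)\cap D_\X\subseteq\rho_A(A)$.

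The plan is to use the $m=0$ case of Proposition~\ref{4.4}, combined with the observation that a diagonal matrix is completely determined by its diagonal entries. Take an arbitrary $\alpha\in C^*(\rho)\cap D_\X$. By Proposition~\ref{4.4} applied with $m=0$, $\alpha$ admits a $0$-th extended entry function on $X_{0,0}$ vanishing at infinity. Since $X_{0,0}=\SET{}{(x,y)\in X\times X}{x=y}$ is the diagonal of $X\times X$, homeomorphic to $X$ via the first projection, this extended entry function corresponds to a function $a\in C_0(X)=A$ with the property that $a(x)=\inn{\e_x}{\alpha\e_x}$ for every $x\in\X$.

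Now consider $\beta:=\alpha-\rho_A(a)\in C^*(\rho)\cap D_\X$, which is still diagonal. For every $x\in\X$, the $(x,x)$-entry of $\beta$ is $\inn{\e_x}{\alpha\e_x}-a(x)=0$ by construction. Since $\beta\in D_\X$, all its off-diagonal entries also vanish. By Proposition~\ref{2.1.4.1}, $\beta=0$, so $\alpha=\rho_A(a)\in\rho_A(A)$, as desired.

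There is no real obstacle here: once Proposition~\ref{4.4} supplies a genuine element $a\in C_0(X)$ interpolating the diagonal of $\alpha$ on the dense set $\X$, diagonality forces equality on the whole of $\X\times\X$. The only point needing care is the identification $X_{0,0}\cong X$ under which the $0$-th extended entry function becomes an element of $A$; this is transparent from the definition of $X_{0,0}$.
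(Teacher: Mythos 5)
Your proposal is correct and follows essentially the same route as the paper: both apply Proposition~\ref{4.4} with $m=0$ to extract the $0$-th extended entry function $a$ and then use diagonality to identify $\alpha$ with the image of $a$ under $\rho_A$ (the paper phrases this as $\alpha=\rho_{0,0}(a)\in E_{0,0}=\rho_A(A)$, you as $\alpha-\rho_A(a)=0$). The extra step through $\beta:=\alpha-\rho_A(a)$ and Proposition~\ref{2.1.4.1} is just a slightly more explicit packaging of the same argument.
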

\begin{proof}
  The inclusion~``$\supseteq$'' holds obviously.
  Now we show the inverse inclusion.
  Take any~$\alpha\in C^*(\rho)\cap D_\X$.
  Let $a$ be the $0$-th extended entry function for~$\alpha$, whose existence is guaranteed by Proposition~\ref{4.4}.
  Since $\alpha$ is diagonal, we have that~$\alpha=\rho_{0,0}(a)$ and hence it belongs to~$E_{0,0}=\rho_A(A)$.
\end{proof}

\begin{lemma}\label{4.6}
  An approximate unit of~$\rho_A(A)$ is also an approximate unit of~$C^*(\rho)$.
\end{lemma}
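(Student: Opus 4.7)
The plan is to reduce the claim to checking two-sided convergence against a family of elements with dense linear span in $C^*(\rho)$, and then extend by the uniform bound $\norm{\rho_A(a_\lambda)}\leq 1$ on the approximate unit.

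First, I would recall from Proposition~\ref{3.8.1} and Proposition~\ref{3.6.4} that $C^*(\rho)$ equals the closed linear span of all products of the form $\rho_m(g)\rho_n(h)^*$ with $m,n\in\N\sqcup\{0\}$ and $g,h\in C_0(X)$. Thus it suffices to show, for every such product $\xi=\rho_m(g)\rho_n(h)^*$ and every approximate unit $\{\rho_A(a_\lambda)\}$ of $\rho_A(A)$, that $\rho_A(a_\lambda)\xi\to\xi$ and $\xi\rho_A(a_\lambda)\to\xi$; a standard $\varepsilon/3$-argument using $\sup_\lambda\norm{\rho_A(a_\lambda)}\leq 1$ then promotes this to all of~$C^*(\rho)$.

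The key computation is the identity $\rho_A(a)\rho_m(g)=\rho_m(ag)$, which I would verify by looking at $(x,y)$-entries: the $(x,y)$-entry of $\rho_A(a)\rho_m(g)$ equals $a(x)g(x)$ when $R^{\circ m}(x)=y$ and vanishes otherwise, matching $\rho_m(ag)$. Taking adjoints gives $\rho_n(h)^*\rho_A(a)=\rho_n(\bar{a}h)^*$. With these in hand,
\begin{equation*}
  \rho_A(a_\lambda)\rho_m(g)\rho_n(h)^*
  =\rho_m(a_\lambda g)\rho_n(h)^*,
  \qquad
  \rho_m(g)\rho_n(h)^*\rho_A(a_\lambda)
  =\rho_m(g)\rho_n(\overline{a_\lambda}h)^*,
\end{equation*}
and since $\{a_\lambda\}$ is an approximate unit of $C_0(X)$ we have $a_\lambda g\to g$ and $\overline{a_\lambda}h\to h$ in the supremum norm; the bi-Lipschitz bound on $\rho_m$ and $\rho_n$ (from Proposition~\ref{3.4.2}) then yields the desired norm convergence.

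There is no real obstacle here—the only mild subtlety is ensuring that left multiplication by $\rho_A(a_\lambda)$ converges, which is handled by the entry-level computation above, while right multiplication convergence can alternatively be read off from Proposition~\ref{2.1.3.5}. The conclusion that $\{\rho_A(a_\lambda)\}$ is an approximate unit of $C^*(\rho)$ is then immediate.
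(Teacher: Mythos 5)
Your proof is correct, and it follows the same overall skeleton as the paper's: verify two\nobreakdash-sided convergence on a set with dense linear span and promote it to all of $C^*(\rho)$ by a $3\varepsilon$-argument using the uniform bound $\norm{\rho_A(a_\lambda)}\leq 1$. The difference lies in the choice of dense family and the key identity used on it. The paper works with words in the generators $E_0\cup E_1\cup E_1^*$: it obtains $\rho_E(f)\rho_A(a_\lambda)\to\rho_E(f)$ from the Hilbert-module statement $fa_\lambda\to f$ (Proposition~\ref{2.1.3.5}) plus the fact that $\rho$ preserves right scalar multiplication, obtains $\rho_A(a_\lambda)\rho_E(f)\to\rho_E(f)$ from the pointwise form of the left action and the equivalence of the $E$-norm with the supremum norm, and then notes that for a product of generators only the outermost factor matters. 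You instead take the monomials $\rho_m(g)\rho_n(h)^*$, whose linear span is dense by Propositions~\ref{3.6.4} and~\ref{3.8.1}, and use the identity $\rho_A(a)\rho_m(g)=\rho_m(ag)$ --- which is exactly Proposition~\ref{3.2.1} with first index $0$, so you need not reverify it entrywise --- together with its adjoint $\rho_n(h)^*\rho_A(a)=\rho_n(\bar{a}h)^*$ and the bi-Lipschitz bound on $\rho_m$, $\rho_n$. Your route leans on the $E_{m,n}$ machinery of Section~3 and makes the verification on the dense set a one-line computation; the paper's route is more self-contained at the level of the C*-correspondence and does not presuppose the analysis of the matrices $\rho_m(g)\rho_n(h)^*$. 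The only point worth making explicit in your write-up is that $\overline{a_\lambda}=a_\lambda$ (an approximate unit may be taken positive), or alternatively that $\overline{a_\lambda}h\to h$ follows by conjugating $a_\lambda\bar{h}\to\bar{h}$.
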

\begin{proof}
  Let $\{\alpha_\lambda\}$ be an approximate unit of~$A$.
  Since $\rho_A$ is isomorphism between $A$ and $\rho_A(A)$, letting $a_\lambda:=\rho_A^{-1}(\alpha_\lambda)$ for each~$\lambda$, the family~$\{a_\lambda\}$ is also an approximate unit for~$A$.
  
  First we show that $\lim_\lambda(\rho_E(f)\rho_A(a_\lambda))=\rho_E(f)$ for any $f\in E$.
  Using Proposition~\ref{2.1.3.5}, we obtain this as following:
  \begin{equation*}
    \lim_\lambda(\rho_E(f)\rho_A(a_\lambda))
    =\lim_\lambda\rho_E(fa_\lambda)
    =\rho_E(\lim_\lambda fa_\lambda)
    =\rho_E(f).
  \end{equation*}
  
  Next we show that $\lim_\lambda(\rho_A(a_\lambda)\rho_E(f))=\rho_E(f)$ for any~$f\in E$.
  Since $\rho_E$ is isometry, it suffices to show that $\lim_\lambda(a_\lambda f)=f$ in~$E$.
  Here we recall that the left $A$-scalar multiplication of~$E$ is defined by $(a\cdot f)(x):=a(x)f(x)$ for all~$x\in X$.
  Therefore, $a_\lambda f$ coincides the product of~$a$ and $f$ in~$A=C_0(X)$.
  We note that the norm of~$E=C_0(X)$ is equivalent to the supremum norm.
  The equation $\lim_\lambda(a_\lambda f)=f$ in~$A$ is followed by that $\lim_\lambda(a_\lambda f)=f$ in~$E$.
  
  By the above, we have $\lim_\lambda(\beta\alpha_\lambda)=\beta$ and $\lim_\lambda(\alpha_\lambda\beta)=\beta$ for any~$\beta\in E_1$.
  After applying involutions on both hand sides of the letter equation, we obtain that $\lim_\lambda(\beta^*\alpha_\lambda)=\beta^*$.
  These results are followed by that the equation $\lim_\lambda(\beta\alpha_\lambda)=\beta$ holds for every element~$\beta$ of the algebraic subalgebra~$B$ of~$C^*(\rho)$ generated by~$E_0\cup E_1\cup E_1^*$.
  $B$ is a $*$-subalgebra of~$C^*(\rho)$ including $E_0$ and $E_1$; and hence dense in~$C^*(\rho)$.
  
  Take any~$\beta\in C^*(\rho)$.
  Let $\varepsilon$ be a positive real number.
  We can take $\beta'\in B$ satisfying $\norm{\beta'-\beta}<\varepsilon$.
  Then, we have the following:
  \begin{align*}
    \norm{\beta\alpha_\lambda-\beta}
    &\leq\norm{(\beta-\beta')(\alpha_\lambda-1)}+\norm{\beta'\alpha_\lambda-\beta'}\\
    &=\norm{\beta-\beta'}\bigl(\norm{\alpha_\lambda}+\norm{1}\bigr)+\norm{\beta'\alpha_\lambda-\beta'}\\
    &<2\varepsilon+\norm{\beta'\alpha_\lambda-\beta'}.
  \end{align*}
  Taking limits on~$\lambda$, we get $\lim_\lambda\norm{\beta\alpha_\lambda-\beta}<2\varepsilon$.
  Since this holds for any~$\varepsilon>0$, we obtain that $\lim_\lambda\norm{\beta\alpha_\lambda-\beta}=0$.
  In other words, $\lim_\lambda(\beta\alpha_\lambda)=\beta$ holds.
  
  Therefore, $\{\alpha_\lambda\}$ is an approximate unit for~$C^*(\rho)$.
\end{proof}

\begin{lemma}[\cite{KW17}, Theorem 4.11]\label{4.7}
  $\rho_A(A)$ is a maximal abelian subalgebra of~$C^*(\rho)$.
\end{lemma}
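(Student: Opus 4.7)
The plan is to reduce the statement to Proposition~\ref{4.5}, by showing that the commutant of $\rho_A(A)$ inside the ambient matrix algebra $M_\X$ is already contained in the diagonal algebra~$D_\X$. Since $\rho_A$ is a $*$-homomorphism from the commutative C*-algebra $A=C_0(X)$ and its image consists of diagonal matrices, $\rho_A(A)$ is abelian; so maximal abelianness amounts to showing that any $\alpha\in C^*(\rho)$ commuting with every $\rho_A(a)$ lies in $\rho_A(A)$.

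The key computation is the following. Suppose $\alpha\in M_\X$ commutes with $\rho_A(a)$ for every $a\in A$, and take distinct points $x,y\in\X$. Since $X$ is Hausdorff we may choose $a\in C_0(X)$ with $a(x)\ne a(y)$. Using $\rho_A(a)\e_z=\e_z a(z)$ and the self-adjointness formula $\rho_A(a)^*=\rho_A(\bar a)$, I would expand both sides of $\inn{\e_x}{\alpha\rho_A(a)\e_y}=\inn{\e_x}{\rho_A(a)\alpha\e_y}$ to obtain
\begin{equation*}
  a(y)\inn{\e_x}{\alpha\e_y}=a(x)\inn{\e_x}{\alpha\e_y},
\end{equation*}
forcing $\inn{\e_x}{\alpha\e_y}=0$. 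Hence $\alpha$ is diagonal, i.e.\ $\alpha\in D_\X$.

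Combining this with Proposition~\ref{4.5} finishes the argument: if $\alpha\in C^*(\rho)$ is in the commutant of $\rho_A(A)$, then $\alpha\in C^*(\rho)\cap D_\X=\rho_A(A)$, so $\rho_A(A)$ equals its own commutant in $C^*(\rho)$ and in particular is maximal abelian.

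Given the tools already developed, there is no serious obstacle: the genuine content has been concentrated in Proposition~\ref{4.5} (which relies on the extended entry functions of Proposition~\ref{4.4} and the density of $\X$ in $X$). The only subtlety to keep in mind is that the separation argument in the key computation must be carried out at points of the dense set $\X$ (not arbitrary points of $X$), but since the inner products $\inn{\e_x}{\alpha\e_y}$ are indexed precisely by pairs in~$\X\times\X$, this is automatic.
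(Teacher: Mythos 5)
Your proposal is correct and is essentially the paper's own argument read contrapositively: the paper takes $\alpha\in C^*(\rho)\setminus\rho_A(A)$, uses Proposition~\ref{4.5} to find a nonzero off-diagonal entry $\inn{\e_x}{\alpha\e_z}$, and exhibits $b$ with $b(x)\neq b(z)$ so that $\inn{\e_x}{(\alpha\rho_A(b)-\rho_A(b)\alpha)\e_z}=\inn{\e_x}{\alpha\e_z}(b(z)-b(x))\neq 0$, which is exactly your key computation in reverse. Both routes concentrate the real content in Proposition~\ref{4.5}, so there is nothing to flag.
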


We give another simple proof based on the representation~$\rho$ here.

\begin{proof}
  Take any~$\alpha\in C^*(\rho)\setminus\rho_A(A)$.
  By Proposition~\ref{4.5}, we have that~$\alpha\not\in D_\X$ and hence there exists distinct two elements~$x,z\in\X$ satisfying that~$\inn{\e_x}{\alpha\e_z}\neq 0$.
  We can choose~$b\in C_0(X)=A$ with~$b(x)\neq b(z)$.
  Let $\beta:=\rho_A(b)$.
  \begin{align*}
    \inn{\e_x}{(\alpha\beta-\beta\alpha)\e_z}
    &=\inn{\e_x}{\alpha\beta\e_z}-\inn{\e_x}{\beta\alpha\e_z}\\
    &=\inn{\e_x}{\alpha\sum_{y\in\X}\e_y\inn{\e_y}{\beta\e_z}}-\inn{\e_x}{\beta\sum_{y\in\X}\e_y\inn{\e_y}{\alpha\e_z}}\\
    &=\sum_{y\in\X}\inn{\e_x}{\alpha\e_y}\inn{\e_y}{\beta\e_z}-\sum_{y\in\X}\inn{\e_x}{\beta\e_y}\inn{\e_y}{\alpha\e_z}\\
    &=\inn{\e_x}{\alpha\e_z}\inn{\e_z}{\beta\e_z}-\inn{\e_x}{\beta\e_x}\inn{\e_x}{\alpha\e_z}\\
    &=\inn{\e_x}{\alpha\e_z}b(z)-b(x)\inn{\e_x}{\alpha\e_z}\\
    &=\inn{\e_x}{\alpha\e_z}(b(z)-b(x))\\
    &\neq 0
  \end{align*}
  By the above, we have~$\alpha\beta\neq\beta\alpha$.
  Therefore, $\rho_A(A)$ is a maximal abelian subalgebra of~$C^*(\rho)$.
\end{proof}

\begin{lemma}\label{4.9}
  The restriction to~$C^*(\rho)$ of the operator~$\delta$ defined in Proposition~\ref{2.1.2.6} is a faithful conditional expectation from~$C^*(\rho)$ onto~$\rho_A(A)$.
\end{lemma}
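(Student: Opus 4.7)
The plan is to reduce the claim to analyzing $\delta$ on the generating subspaces $E_{m,n}$. By Proposition~\ref{3.8.1}, $C^*(\rho)$ is the closure of the additive subgroup generated by $\bigcup_{m,n \in \N\sqcup\{0\}} E_{m,n}$. Since $\delta$ is a bounded linear map (Proposition~\ref{2.1.2.6}) and $\rho_A(A)$ is closed, it suffices to verify $\delta(E_{m,n}) \subseteq \rho_A(A)$ for each pair $(m,n)$; the containment $\delta(C^*(\rho)) \subseteq \rho_A(A)$ then follows by linearity and continuity.

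For $\alpha = \rho_{m,n}(f)$ with $f \in C_0(X_{m,n})$, the diagonal entry $\inn{\e_x}{\alpha\e_x}$ equals $f(x,x)$ if $(x,x) \in \X_{m,n}$ and vanishes otherwise. Two cases arise. If $m = n$, then $(x,x) \in \X_{m,n}$ for every $x \in \X$, and I would identify $\delta(\alpha)$ with $\rho_A(a)$, where $a(x) := f(x,x)$ is the pullback of $f$ along the diagonal embedding $X \hookrightarrow X_{m,m}$ and hence lies in $C_0(X)$. If $m \neq n$, say $m > n$, then $(x,x) \in \X_{m,n}$ would force $R^{\circ(m-n)}(R^{\circ n}(x)) = R^{\circ n}(x)$, making $R^{\circ n}(x)$ a fixed point of the iterate $R^{\circ(m-n)}$; but $\X$ was defined precisely to exclude the orbit under $R$ of every fixed point of every iterate, and $\X$ is completely invariant under $R$, so no such $x \in \X$ exists. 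Hence every diagonal entry of $\alpha$ vanishes, and $\delta(\alpha) = 0 \in \rho_A(A)$.

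Once the containment is established, the conditional-expectation properties are essentially free: by Proposition~\ref{2.1.2.6}, $\delta$ is idempotent on $D_\X \supseteq \rho_A(A)$ and $D_\X$-linear, so its restriction is idempotent and $\rho_A(A)$-bilinear onto $\rho_A(A)$; it is positive (part~4); and Fact~\ref{2.1.2.5} yields that it is a conditional expectation. Faithfulness transfers directly from $M_\X$ (Proposition~\ref{2.1.2.6}(5)) to the C*-subalgebra $C^*(\rho)$. The main obstacle I expect is the $m \neq n$ subcase in the analysis of $E_{m,n}$: the vanishing of diagonal entries there is the technical heart of the argument and depends crucially on having removed orbits of \emph{all periodic points} of all iterates in the construction of $\X$, not merely the orbit of branch points. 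Without that cleanup, $\delta$ would drag elements of $E_{m,n}$ with $m \neq n$ into diagonal matrices that need not correspond to elements of $\rho_A(A)$, and the containment would fail.
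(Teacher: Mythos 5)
Your argument is correct and follows essentially the same route as the paper: the paper's proof simply cites Proposition~\ref{4.4} to get that $\delta\restriction_{C^*(\rho)}$ factors as $\rho_{0,0}\circ\ent_\Delta$ (where $\ent_\Delta$ takes the $0$-th extended entry function), and Proposition~\ref{4.4} is itself proved by exactly your case analysis on the generators $E_{i,j}$ via Proposition~\ref{3.3.5}, whose part~(1) is precisely your observation that $\X$ avoids orbits of periodic points so that $\X_{m,n}$ meets the diagonal only when $m=n$. The remaining steps (idempotence, $D_\X$-linearity, positivity, faithfulness from Proposition~\ref{2.1.2.6}, then Fact~\ref{2.1.2.5}) are identical in both proofs.
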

\begin{proof}
  Let $\ent_\Delta$ be the bounded linear operator from~$C^*(\rho)$ onto~$C_0(X_{0,0})$ which maps $\alpha$ to the $0$-th extended entry function for~$\alpha$, whose existence is guaranteed by Proposition~\ref{4.4}.
  Then, $\delta{\restriction_{C^*(\rho)}}=\rho_{0,0}\circ\ent_\Delta$ holds and hence the image of~$C^*(\rho)$ under~$\delta$ coincides with~$E_{0,0}=\rho_A(A)$.

  By Proposition~\ref{2.1.2.6}, the restriction~$\delta{\restriction_{C^*(\rho)}}$ is idempotent, positive, and $\rho_A(A)$-linear, and moreover it is faithful as a positive operator.
  Therefore we can apply Fact~\ref{2.1.2.5} to~$\delta{\restriction_{C^*(\rho)}}$; and it is a faithful conditional expectation.
\end{proof}

\begin{lemma}\label{4.10}
  $N=C^*(\rho)\cap QM_\X$.
\end{lemma}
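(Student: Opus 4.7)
The plan is to prove the two inclusions $N \subseteq C^*(\rho) \cap QM_\X$ and $C^*(\rho) \cap QM_\X \subseteq N$ separately, using that the off-diagonal entries of $\alpha\rho_A(b)\alpha^*$ (respectively $\alpha^*\rho_A(b)\alpha$) encode condition~(b) (respectively~(a)) of the one-to-one definition in Definition~\ref{2.1.2.1}.

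For the forward inclusion, I would take $\alpha \in N$. For any $b \in A$, the element $\alpha\rho_A(b)\alpha^*$ belongs to $\rho_A(A) \subseteq D_\X$ and so has vanishing off-diagonal entries. Expanding using $\alpha^*\e_z=\sum_w \e_w\overline{\inn{\e_z}{\alpha\e_w}}$ gives, for $y\neq z$,
\[
  0 = \inn{\e_y}{\alpha\rho_A(b)\alpha^*\e_z} = \sum_{w\in\X}\inn{\e_y}{\alpha\e_w}\,b(w)\,\overline{\inn{\e_z}{\alpha\e_w}}.
\]
The coefficients are absolutely summable by the Cauchy--Schwarz inequality, since both rows of $\alpha$ lie in $\ell^2(\X)$ (the images $\alpha^*\e_y$, $\alpha^*\e_z$ are in $\H_\X$). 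They therefore assemble into a finite complex Borel measure $\mu$ on $X$ supported in $\X$ that annihilates $C_0(X)$; by the Riesz representation theorem $\mu = 0$, so each coefficient vanishes. That is, for $y\neq z$ no $w\in\X$ satisfies both $y\rel\alpha w$ and $z\rel\alpha w$. Applying the same argument to $\alpha^*\rho_A(b)\alpha$ excludes $w\rel\alpha y$ and $w\rel\alpha z$ for $y\neq z$, so $\rel\alpha$ is one-to-one and $\alpha$ is quasi-monomial.

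For the reverse inclusion, I would take $\alpha \in C^*(\rho) \cap QM_\X$ and any $b \in A$. The same expansion of the $(y,z)$-entry of $\alpha\rho_A(b)\alpha^*$ now shows that for $y\neq z$ each summand vanishes, since one-to-oneness of $\rel\alpha$ forbids $y\rel\alpha w$ and $z\rel\alpha w$ simultaneously. Hence $\alpha\rho_A(b)\alpha^*\in C^*(\rho)\cap D_\X$, and Proposition~\ref{4.5} identifies this intersection with $\rho_A(A)$. The symmetric computation using the other one-to-one condition handles $\alpha^*\rho_A(b)\alpha$, yielding $\alpha\in N$.

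The only genuine obstacle is justifying the step ``each coefficient of $b(w)$ must vanish'' in the forward direction, and that is precisely what the Cauchy--Schwarz absolute-summability estimate together with Riesz representation provide; after that, both directions reduce to a direct matrix-entry computation combined with Proposition~\ref{4.5}.
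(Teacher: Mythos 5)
Your proof is correct, and its overall architecture matches the paper's: both directions reduce to computing the $(y,z)$-entries of $\alpha\rho_A(b)\alpha^*$ and $\alpha^*\rho_A(b)\alpha$, using Proposition~\ref{4.5} to identify $C^*(\rho)\cap D_\X$ with $\rho_A(A)$, and your treatment of the inclusion $C^*(\rho)\cap QM_\X\subseteq N$ is essentially the paper's verbatim (the paper phrases the entry computation through Proposition~\ref{2.1.2.4}, but it is the same termwise-vanishing argument). The one place you diverge is the inclusion $N\subseteq QM_\X$: the paper proves the contrapositive, fixing a point $u$ witnessing failure of one-to-oneness, choosing a shrinking sequence of bump functions $h_k$ supported near $u$, and using the dominated convergence theorem to produce a single $h$ with $\sum_x\overline{f(x)}h(x)g(x)\neq 0$, hence a non-diagonal $\alpha^*\rho_A(h)\alpha$. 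You instead argue directly: the absolutely summable coefficient family (justified, as you say, by Cauchy--Schwarz on the $\ell^2$ rows $\alpha^*\e_y$, $\alpha^*\e_z$) defines a finite atomic complex Borel measure annihilating $C_0(X)$, which vanishes by the uniqueness part of the Riesz representation theorem, so every coefficient is zero. The two are really the same localization idea; yours packages it via the duality $C_0(X)^*\cong M(X)$ and kills all off-diagonal coefficients at once, while the paper's is more elementary and self-contained, extracting only the single nonvanishing entry it needs. Both are complete; no gap.
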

\begin{proof}
  First we show the inclusion~``$\supseteq$''.
  Take any $\alpha\in C^*(\rho)\cap QM_\X$ and $\beta\in\rho_A(A)$.
  Take any two points~$w,z\in \X$ satisfying the condition~$w\rel{\alpha^*\beta\alpha}z$.
  This condition implies that~$w\rel{\alpha^*}\rel\beta\rel\alpha z$.
  That is, $w\rel{\alpha^*}x$, $x\rel\beta y$, and $y\rel\alpha z$ hold for some~$x,y\in\X$.
  $w\rel{\alpha^*}x$ is followed by~$x\rel\alpha w$.
  Since $\beta$ is diagonal, $x\rel\beta y$ implies that~$x=y$.
  Since $\alpha$ is quasi-monomial, $x\rel\alpha w$ and $x=y\rel\alpha z$ are followed by that~$w=z$.
  Therefore, the binary relation~$\rel{\alpha^*\beta\alpha}$ is diagonal and the matrix~$\alpha^*\beta\alpha$ is diagonal.
  Besides this matrix belongs to~$C^*(\rho)$.
  By Proposition~\ref{4.5}, the matrix~$\alpha^*\beta\alpha$ belongs to~$\rho_A(A)$.
  In the same way, we can show that the matrix~$\alpha\beta\alpha^*$ also belongs to~$\rho_A(A)$.
  Since these results hold for any~$\beta\in\rho_A(A)$, we obtain that~$\alpha\in N$.
  By the above discussion, $N$ includes $C^*(\rho)\cap QM_\X$.
  
  Next we show the inverse inclusion~``$\subseteq$''.
  Take any~$\alpha\in C^*(\rho)\setminus QM_\X$.
  There exist three points~$u,v,w\in\X$ satisfying the condition
  \begin{center}
  (1) $u\rel\alpha v$, $u\rel\alpha w$, and $v\neq w$
  \quad or\quad
  (2) $u\rel\alpha w$, $v\rel\alpha w$, and $u\neq v$.
  \end{center}
  Without loss of generality, we may suppose that the condition (1) holds.
  Let $f$ and $g$ be the functions on~$\X$ which maps $x$ to the $(x,v)$- and $(x,w)$-entry of~$\alpha$, respectively.
  They are $L^2$~functions on the measure space~$\X$ with counting measure and do not vanish at~$u$.
  Since $X$ is first-countable and locally compact, we can take a decreasing sequence $U_1\supseteq U_2\supseteq\cdots$ of relatively compact open neighborhoods of~$u$ satisfying that~$\bigcap_{k\in\N}U_k=\{u\}$.
  For each~$k$, we can take a $[0,1]$-valued continuous function~$h_k$ on~$X$ which values~$1$ at~$u$ and which supported by~$U_k$.
  The sequence~$\{\bar{f}h_kg\}_{k\in\N}$ is dominated by~$\abs{fg}$ and converges pointwise to the function which values~$1$ at~$u$ and vanishes otherwise.
  Applying the dominated convergence theorem, we obtain that
  \begin{equation*}
    \lim_{k\to\infty}\sum_{x\in\X}\overline{f(x)}h_k(x)g(x)
    =f(u)g(u)
    \neq 0.
  \end{equation*}
  Therefore, letting~$h:=h_K$ for some~$K$, we have that $\sum_{x\in\X}\overline{f(x)}h(x)g(x)\neq 0$.
  \begin{align*}
    \inn{\e_v}{\alpha^*\rho_A(h)\alpha\e_w}
    &=\inn{\e_v}{\alpha^*\sum_{x\in\X}\e_x\inn{\e_x}{\rho_A(h)\sum_{y\in\X}\e_y\inn{\e_y}{\alpha\e_w}}}\\
    &=\sum_{x\in\X}\sum_{y\in\X}\inn{\e_v}{\alpha^*\e_x}\inn{\e_x}{\rho_A(h)\e_y}\inn{\e_y}{\alpha\e_w}\\
    &=\sum_{x\in\X}\sum_{y\in\X}\overline{\inn{\e_x}{\alpha\e_v}}\inn{\e_x}{\rho_A(h)\e_y}\inn{\e_y}{\alpha\e_w}\\
    &=\sum_{x\in\X}\overline{\inn{\e_x}{\alpha\e_v}}\inn{\e_x}{\rho_A(h)\e_x}\inn{\e_x}{\alpha\e_w}\\
    &=\sum_{x\in\X}\overline{f(x)}h(x)g(x)\\
    &\neq 0.
  \end{align*}
  Therefore, the $\X$-squared matrix~$\alpha^*\rho_A(h)\alpha$ is not diagonal and hence does not belong to~$\rho_A(A)$.
  From the existence of such~$h$, it follows that $\alpha^*\rho_A(A)\alpha\not\subseteq\rho_A(A)$ and hence that $\alpha\not\in N$.

  By the above discussion, we have that $C^*(\rho)\setminus QM_\X$ is included in $C^*(\rho)\setminus N$ and hence that $N\subseteq C^*(\rho)\cap QM_\X$.
\end{proof}

\begin{lemma}\label{4.11}
  Let $m\in\N\sqcup\{0\}$.
  The $m$-th extended entry function of an $\X$-squared matrix belonging to~$C^*(N)$ vanishes on~$S_{m,0}$.
\end{lemma}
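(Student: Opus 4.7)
The plan is to proceed in three stages: establish the claim for a single $\alpha \in N$, extend by linearity to the algebraic linear span $\alspan(N)$, and then extend by continuity to its closure. This reduction is clean because $N$ is closed under involution and composition (as every normalizer is), so the $*$-algebra generated by $N$ coincides with $\alspan(N)$ and $C^*(N)$ is its norm closure.

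The heart of the argument will be the single-normalizer case. I would fix $\alpha \in N = C^*(\rho) \cap QM_\X$ (using Lemma~\ref{4.10}) and take $(x_0, y_0) \in S_{m,0}$, so the branch index $k := e_{R^{\circ m}}(x_0)$ is at least $2$ and $y_0 = R^{\circ m}(x_0)$ (recall that $R^{\circ 0} = \id$ has no branch points, so only the branching of $R^{\circ m}$ at $x_0$ matters). By Fact~\ref{2.2.1.1}, $R^{\circ m}$ is locally modelled on $z \mapsto z^k$ near $x_0$; hence for each $y$ sufficiently close to but distinct from $y_0$, there are exactly $k$ distinct preimages $x_1(y), \ldots, x_k(y)$ of $y$ under $R^{\circ m}$ clustered near $x_0$, with each $x_i(y) \to x_0$ as $y \to y_0$. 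Using the density of $\X$ and its complete invariance under $R$ (so that $y \in \X$ forces all preimages $x_i(y) \in \X$), I would choose a sequence $y_n \to y_0$ with $y_n \in \X$. The quasi-monomial property of $\alpha$ then forces that at most one of the $k$ entries $\inn{\e_{x_i(y_n)}}{\alpha\e_{y_n}}$ ($i = 1, \ldots, k$) is nonzero; hence at least one index $i(n)$ yields a vanishing entry. Pigeonholing over the finite index set $\{1, \ldots, k\}$ produces an $i^*$ valid for infinitely many $n$, so along that subsequence $(x_{i^*}(y_n), y_n) \to (x_0, y_0)$ in $X_{m,0}$ with identically vanishing entry. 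Continuity of the extended entry function from Proposition~\ref{4.4} then forces its value at $(x_0, y_0)$ to be zero.

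To propagate the conclusion to all of $C^*(N)$, the linearity of $\alpha \mapsto f_m^\alpha$ immediately handles $\alspan(N)$. For the closure step I would use the contraction $\abs{\inn{\e_x}{\beta\e_y}} \leq \norm{\beta}$ valid for any $\beta \in C^*(\rho)$: this shows that $\beta \mapsto f_m^\beta$ is Lipschitz for the supremum norm on entries over $\X_{m,0}$, and combined with the density of $\X_{m,0}$ in $X_{m,0}$ (Proposition~\ref{3.3.4}), the continuous extensions converge uniformly on $X_{m,0}$. Consequently the set of $\beta \in C^*(\rho)$ whose extended entry function vanishes on the closed set $S_{m,0}$ is norm-closed and therefore contains $C^*(N) = \overline{\alspan(N)}$.

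The main obstacle is the single-normalizer step: marrying the local branched normal form of $R^{\circ m}$ with the at-most-one-nonzero-per-column condition of a quasi-monomial matrix. Once that combinatorial fact is in place, the branching produces $k \geq 2$ distinct preimages collapsing to the branch point with at least one vanishing entry, and continuous extension handles the rest; the propagation to $C^*(N)$ should then be routine.
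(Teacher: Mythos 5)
Your proof is correct, and its skeleton coincides with the paper's: both reduce to showing the claim for a single $\alpha\in N$ via the identification $N=C^*(\rho)\cap QM_\X$ (Lemma~\ref{4.10}), then pass to $C^*(N)$ using that the additive span of $N$ is a dense $*$-subalgebra and that $\beta\mapsto(\text{$m$-th extended entry function of }\beta)$ is a bounded linear map, so the vanishing condition on the closed set $S_{m,0}$ survives norm limits. Where you diverge is in the execution of the single-normalizer step. The paper argues by contradiction: if the extended entry function $f$ were nonzero at $(t,R^{\circ m}(t))$, then $U:=\set{x\in X}{f(x,R^{\circ m}(x))\neq 0}$ is an open neighborhood of~$t$, and the mere definition of a branch point (non-injectivity of $R^{\circ m}$ on every neighborhood) together with density of $\X$ yields two distinct $u,v\in U\cap\X$ with a common image $w$, producing two nonvanishing entries in column~$w$ and contradicting quasi-monomiality. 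You instead invoke the local normal form $z\mapsto z^k$ of Fact~\ref{2.2.1.1} to build explicit $k$-tuples of preimages of points $y_n\to y_0$ in $\X$, extract a branch of identically vanishing entries by pigeonhole, and pass to the limit using continuity of the extended entry function. Both arguments rest on the same incompatibility between branching and the one-nonzero-entry-per-column property; yours costs the normal-form machinery and a subsequence extraction but gives a constructive sequence witnessing the vanishing, while the paper's gets away with only the topological definition of a branch point. Either is acceptable; just make sure, in your version, to note that the $k$ local preimages of $y_n$ land in $\X$ (complete invariance of $\X$) so that their entries are genuine matrix entries over $\X_{m,0}$ --- a point you do address.
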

\begin{proof}
  First we prove the weaker statement that the $m$-th extended entry function of an $\X$-squared matrix belonging to~$N$ vanishes on~$S_{m,0}$.
  To prove this statement, take~$\alpha\in N$ and a branch point~$t$ of~$R^{\circ m}$.
  Let $f$ be the $m$-th extended entry function for~$\alpha$.
  Now we show that $f(t,R^{\circ m}(t))=0$ by contradiction.
  Assume that $f(t,R^{\circ m}(t))\neq 0$.
  Let $U:=\SET{}{x\in X}{f(x,R^{\circ m}(x))\neq 0}$.
  This is an open neighborhood of~$t$.
  Since $R^{\circ m}$ branches at~$t$, there exist distinct two points~$u,v\in U$ such that $R^{\circ m}(u)=R^{\circ m}(v)=:w$.
  Since $\X$ is dense in~$X$, we may suppose that~$u\in\X$, from which it follows automatically that~$v,w\in\X$ and that $(u,w),(v,w)\in\X_{m,0}$.
  The relation~$u\rel\alpha w$ follows from
  \begin{gather*}
    \inn{\e_u}{\alpha\e_w}
    =f(u,w)
    =f(u,R^{\circ m}(u))
    \neq 0.
  \end{gather*}
  The relation~$v\rel\alpha w$ follows from the similar calculation.
  Therefore, $\alpha$ is not quasi-monomial.
  This contradicts Lemma~\ref{4.10}.
  So $f(t,R^{\circ m}(t))=0$ holds.
  By the above argument, the weaker statement is true.

  The normalizer~$N$ has the zero element of~$C^*(\rho)$ and is closed under multiplication, complex-scalar multiplication, and involution.
  Hence, the algebraic additive subgroup of~$C^*(\rho)$ generated by~$N$ is a $*$-subalgebra of~$C^*(\rho)$ and hence dense in~$C^*(N)$.
  Since the map from~$C^*(\rho)$ to~$C_0(X_{m,0})$ which maps $\alpha$ to the $m$-th extended entry function for~$\alpha$ is a bounded linear operator, the $m$-th extended entry function of an $\X$-squared matrix belonging to~$C^*(N)$ vanishes on~$S_{m,0}$.
\end{proof}

\begin{lemma}\label{4.12}
  If $X$ has a branch point of~$R$, $\rho_A(A)$ is not regular as a C*-subalgebra of~$C^*(\rho)$.
\end{lemma}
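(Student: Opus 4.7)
The plan is to exploit the obstruction provided by Lemma~\ref{4.11}: any $\X$-squared matrix in~$C^*(N)$ has its $m$-th extended entry function vanishing on~$S_{m,0}$ for every~$m$. To defeat regularity it is enough to exhibit a single element of~$C^*(\rho)$ whose $m$-th extended entry function does not vanish somewhere on~$S_{m,0}$, since then $C^*(N)\subsetneq C^*(\rho)$ and so~$N$ cannot generate~$C^*(\rho)$.

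I would take $m=1$. Because $R^{\circ 0}=\id$ has no branch points, the set~$S_{1,0}$ reduces to $\set{(x,R(x))}{\text{$x$ is a branch point of~$R$ in~$X$}}$, and by hypothesis it contains some point~$(t,R(t))$. Choose $f\in C_0(X)$ with $f(t)\neq 0$, which is possible because $X$ is locally compact Hausdorff. Reading off the definition of~$\rho_1=\rho_E$, the $(x,y)$-entry of~$\rho_E(f)$ equals $f(x)$ when $(x,y)\in\X_{1,0}$ and vanishes otherwise; hence the $1$-st entry function is $(x,y)\mapsto f(x)$ on~$\X_{1,0}$, whose continuous extension to~$X_{1,0}$ is $(x,y)\mapsto f(x)$ again.

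Evaluating this extension at~$(t,R(t))\in S_{1,0}$ yields $f(t)\neq 0$, so by the contrapositive of Lemma~\ref{4.11} we conclude $\rho_E(f)\notin C^*(N)$. Hence $C^*(N)\neq C^*(\rho)$, i.e.~$N$ does not generate~$C^*(\rho)$, and~$\rho_A(A)$ is not regular.

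The only step requiring any verification is the identification of the $1$-st extended entry function of~$\rho_E(f)$, and this is a direct computation from the definitions of~$\rho_1$ and of an extended entry function. The substantive work has already been done in Lemma~\ref{4.11}; the role of the present lemma is merely to produce the simplest possible witness showing that the obstruction there is nonvacuous whenever $R$ has a branch point in~$X$.
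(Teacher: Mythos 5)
Your proof is correct and follows essentially the same route as the paper: both produce a witness in $E_{1,0}=E_1$ whose first extended entry function fails to vanish on $S_{1,0}$ and then invoke Lemma~\ref{4.11}. The only cosmetic difference is that the paper picks $f\in C_c(X_{1,0})$ and uses $\rho_{1,0}(f)$, while you pick $f\in C_0(X)$ and use $\rho_E(f)$; these coincide under the identification $X_{1,0}\cong X$ via the first projection.
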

\begin{proof}
  Assume that $X$ has a branch point of~$R$.
  Then, the set~$S_{1.0}$ is not empty and we can take a compactly supported continuous function~$f$ on~$X_{1,0}$ which does not vanish on~$S_{1,0}$.
  The 1st extended function for~$\rho_{1,0}(f)$ is $f$ itself; and hence although $\rho_{1,0}(f)$ belongs to~$C^*(\rho)$, by Lemma~\ref{4.11}, it does not belong to~$C^*(N)$.
  Therefore, $N$ does not generate $C^*(\rho)$ and $\rho_A(A)$ is not regular as a C*-subalgebra of~$C^*(\rho)$.
\end{proof}

\begin{lemma}\label{4.13}
  If $X$ has no branch point of~$R$, $\rho_A(A)$ is regular as a C*-subalgebra of~$C^*(\rho)$.
\end{lemma}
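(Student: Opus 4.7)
The plan is to show the two containments $\rho_A(A)\subseteq C^*(N)$ and $\rho_E(E)\subseteq C^*(N)$. Since $C^*(\rho)$ is generated as a C*-algebra by $\rho_A(A)\cup\rho_E(E)$, these two containments imply $C^*(N)=C^*(\rho)$, which is exactly the regularity of~$\rho_A(A)$. The first containment is immediate: $\rho_A(A)$ is abelian, so for any $\alpha\in\rho_A(A)$ and any $\beta\in\rho_A(A)$ we have $\alpha\beta\alpha^*=\beta\alpha\alpha^*\in\rho_A(A)$ and $\alpha^*\beta\alpha=\beta\alpha^*\alpha\in\rho_A(A)$, i.e.\ $\rho_A(A)\subseteq N\subseteq C^*(N)$.

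The main work is the second containment, and the bridge to the hypothesis is Lemma~\ref{4.10}: an element of~$C^*(\rho)$ lies in~$N$ exactly when it is quasi-monomial. The key localized fact I intend to prove is that if $f\in C_c(X)$ has support contained in some $U\in\U_1$, then $\rho_E(f)\in N$. Indeed, the $(x,y)$-entry of~$\rho_E(f)$ is $f(x)$ when $R(x)=y$ and $0$ otherwise, so the two one-to-one conditions for~$\rel{\rho_E(f)}$ reduce to: (a) $R(x)$ determines $y$ uniquely (automatic); (b) whenever $R(x)=R(y)=z$ with $f(x),f(y)\neq 0$, then $x=y$. For (b), both $x$ and $y$ lie in $\supp f\cap\X\subseteq U$, and since $R$ is injective on~$U$ we conclude $x=y$. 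Hence $\rho_E(f)$ is quasi-monomial and, being in~$C^*(\rho)$, lies in~$N$.

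Now I will feed in the no-branch-point hypothesis. Under it, every point of~$X$ has a relatively compact open neighborhood on which $R$ is injective (this follows from Corollary~\ref{2.2.1.2} together with local compactness), so $\U_1$ covers~$X$. Given an arbitrary $f\in C_c(X)$, I cover the compact set $\supp f$ by finitely many $U_1,\dots,U_k\in\U_1$, adjoin $U_0:=X\setminus\supp f$ to obtain an open cover of~$X$, and take a subordinate partition of unity $\{a_0,a_1,\dots,a_k\}$. Then $a_0f\equiv 0$ since $a_0$ vanishes on $\supp f$, so $f=\sum_{i=1}^{k}a_if$ with $\supp(a_if)\subseteq U_i\in\U_1$. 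By the previous paragraph each $\rho_E(a_if)$ lies in~$N$, and hence $\rho_E(f)=\sum_{i=1}^{k}\rho_E(a_if)\in C^*(N)$.

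Finally I pass to $f\in E=C_0(X)$ by density: $C_c(X)$ is dense in~$C_0(X)$, the operator~$\rho_E$ is bi-Lipschitz, and $C^*(N)$ is closed, so $\rho_E(f)\in C^*(N)$ for every $f\in E$. I do not foresee a serious obstacle; the only point that requires a little care is verifying that $\U_1$ really does cover~$X$ under the no-branch-point hypothesis (making sure the injective neighborhoods can be taken relatively compact), but this is standard from the local structure of~$R$ at non-critical points.
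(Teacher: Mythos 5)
Your proof is correct, and the engine is the same as the paper's---Lemma~\ref{4.10} reduces membership in~$N$ to quasi-monomiality, which you verify for $\rho_E(f)$ with $\supp f$ contained in a set where $R$ is injective by exactly the computation the paper performs---but you organize the reduction to generators differently. The paper shows $N\supseteq\rho_{m,n}(C_c(\U_{m,n}))$ for \emph{all} $m,n$ at once (via Proposition~\ref{3.5.2}), passes to closed linear spans to recover every $E_{m,n}$ (Corollary~\ref{3.5.4}, using that all $S_{m,n}$ are empty), and then concludes via the decomposition of~$C^*(\rho)$ as the closure of the sum of the~$E_{m,n}$'s (Proposition~\ref{3.8.1}). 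You instead treat only the generators: $\rho_A(A)\subseteq N$ trivially, and $\rho_E(E)\subseteq C^*(N)$ by covering $\supp f$ with members of~$\U_1$ (which cover all of~$X$ precisely because there are no branch points), a partition of unity, and density---in effect re-proving the $(m,n)=(1,0)$ case of Propositions~\ref{3.5.1} and~\ref{3.5.3}. Since $C^*(\rho)$ is by definition generated by $\rho_A(A)\cup\rho_E(E)$, this suffices and bypasses Proposition~\ref{3.8.1} entirely. Your route is more self-contained and slightly more elementary; the paper's route reuses the $\U_{m,n}$, $S_{m,n}$, $E_{m,n}$ machinery it has already built for Lemmas~\ref{4.11} and~\ref{4.12}, so in context neither is more expensive. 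The only points needing care in your write-up---that the injective neighborhoods can be taken relatively compact (local compactness of~$X$) and that injectivity plus openness of~$R$ (Proposition~\ref{3.3.2}) gives membership in~$\U_1$---are both fine.
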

\begin{proof}
  Assume that $X$ has no branch point of~$R$.
  Then, $X$ also has no branch points of iterations of~$R$ and hence all of~$S_{m,n}$'s are empty.
  By Proposition~\ref{3.5.2}, an $\X$-squared matrix belongs to~$\rho_{m,n}(C_c(\U_{m,n}))$ is quasi-monomial and hence, by Lemma~\ref{4.10}, $N$ includes $\rho_{m,n}(C_c(\U_{m,n}))$.
  By the bi-Lipschitzness of~$\rho_{m,n}$ and Corollary~\ref{3.5.4}, we have
  \begin{align*}
    \clspan\rho_{m,n}(C_c(\U_{m,n}))
    &=\rho_{m,n}(\clspan C_c(\U_{m,n}))\\
    &=\rho_{m,n}(C_0(X_{m,n}\setminus S_{m,n}))\\
    &=\rho_{m,n}(C_0(X_{m,n}))\\
    &=E_{m,n}.
  \end{align*}
  Therefore, $C^*(N)$ includes all of~$E_{m,n}$'s and hence coincides with~$C^*(\rho)$.
  This means that $\rho_A(A)$ is regular as a C*-subalgebra of~$C^*(\rho)$.
\end{proof}

\begin{theorem}\label{4.14}
  The following are equivalent.
  
  \hangindent=2.5em
  (1) $C_0(X)$ is a Cartan subalgebra of~$\O_R(X)$. That is, $\rho_A(A)$ is a Cartan subalgebra of~$C^*(\rho)$.
  
  (2) $X$ has no branch point of~$R$.
\end{theorem}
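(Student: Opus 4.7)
The plan is to prove the theorem by combining the lemmas developed in this section, recognizing that essentially all the conceptual work has already been done and only the assembly remains. First I would use Proposition~\ref{3.8.8} to justify working with $\rho_A(A)\subseteq C^*(\rho)$ in place of $C_0(X)\subseteq\O_R(X)$, so that the two formulations in statement~(1) are genuinely equivalent and I can apply the results of this section directly.

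For the implication (2)~$\Rightarrow$~(1), I would verify each of the four conditions in Definition~\ref{4.2} separately. Condition~(1) of the definition (that $\rho_A(A)$ contains an approximate unit of~$C^*(\rho)$) follows from Lemma~\ref{4.6}, since any approximate unit of the C*-algebra $\rho_A(A)$ exists and is automatically an approximate unit of $C^*(\rho)$. Condition~(2) (maximal abelianness) is exactly Lemma~\ref{4.7}, which does not even require the hypothesis that $X$ has no branch point. Condition~(3) (regularity) is precisely Lemma~\ref{4.13}, which is where the hypothesis ``$X$ has no branch point of~$R$'' is used. Finally, condition~(4) (existence of a faithful conditional expectation) is Lemma~\ref{4.9}, which also holds unconditionally. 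Assembling these four gives that $\rho_A(A)$ is a Cartan subalgebra of~$C^*(\rho)$.

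For the implication (1)~$\Rightarrow$~(2), I would argue by contrapositive: if $X$ has a branch point of~$R$, then Lemma~\ref{4.12} shows that $\rho_A(A)$ fails the regularity condition~(3) of Definition~\ref{4.2}, and hence is not a Cartan subalgebra of~$C^*(\rho)$. This completes the equivalence.

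There is no real obstacle here, since the difficult analytic work is already distributed among Lemmas~\ref{4.6}, \ref{4.7}, \ref{4.9}, \ref{4.12}, and~\ref{4.13}; the only thing the main theorem does is bundle them together. If anything deserves mention in the write-up, it is that among the four Cartan conditions, three hold unconditionally and only regularity is sensitive to the presence of branch points — making the branch-point dichotomy exactly the right necessary and sufficient condition, as claimed.
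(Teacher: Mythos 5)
Your proposal is correct and follows essentially the same route as the paper: the paper's proof likewise just checks the four conditions of Definition~\ref{4.2} by citing Lemmas~\ref{4.6}, \ref{4.7}, \ref{4.9}, \ref{4.12}, and~\ref{4.13}, with only regularity depending on the branch-point hypothesis. Your explicit appeal to Proposition~\ref{3.8.8} to identify $C_0(X)\subseteq\O_R(X)$ with $\rho_A(A)\subseteq C^*(\rho)$ is a small but welcome addition that the paper leaves implicit in the statement of the theorem.
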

\begin{proof}
  We check the four conditions in Definition~\ref{4.2}.
  \begin{itemize}
    \item By Lemma~\ref{4.6}, $\rho_A(A)$ contains an approximate unit of~$C^*(\rho)$.
    \item By Lemma~\ref{4.7}, $\rho_A(A)$ is a maximal abelian subalgebra of~$C^*(\rho)$.
    \item By Lemma~\ref{4.12} and \ref{4.13}, $\rho_A(A)$ is regular if and only if $X$ has no branch point of~$R$.
    \item By Lemma~\ref{4.9}, there exists a faithful conditional expectation of~$\rho_A(A)$ onto~$C^*(\rho)$.
  \end{itemize}
  From these results, we conclude this theorem.
\end{proof}

\begin{example}\label{4.15}
  Consider the case of~$R(z)=z^2$.
  In this case, $R$ branches only at the point~$0$ and its Julia set~$J$ is the unit circle~$\abs{z}=1$ (Example~\ref{2.2.3.2}).
  $J$ has no branch points of~$R$.
  Therefore, $C_0(J)=C(J)$ is a Cartan subalgebra of~$\O_R(J)$.
\end{example}

\begin{example}\label{4.16}
  Consider the case of~$R(z)=z^2-2$.
  In this case, $R$ branches only at the point~$0$ and its Julia set~$J$ is the closed interval~$[-2,2]$ (Example~\ref{2.2.3.2}).
  $J$ has the branch point~$0$.
  Therefore, $C_0(J)=C(J)$ is not a Cartan subalgebra of~$\O_R(J)$.
\end{example}

\begin{remark}
  In the case of Example~\ref{4.16}, it is known that the Kajiwara--Watatani algebra is isomorphic to the Cuntz algebra $\O_\infty$ (See \cite{KW05} Example 4.2).
  So the representation $\rho$ is a universal representation of $\O_\infty$ constructed without quotients.
\end{remark}

\begin{bibdiv}
\begin{biblist}
\bib{B}{book}{
   author={Blackadar, B.},
   title={Operator algebras},
   series={Encyclopaedia of Mathematical Sciences},
   volume={122},
   note={Theory of $C^*$-algebras and von Neumann algebras;
   Operator Algebras and Non-commutative Geometry, III},
   publisher={Springer-Verlag, Berlin},
   date={2006},
   pages={xx+517},
   isbn={978-3-540-28486-4},
   isbn={3-540-28486-9},
}
\bib{CG}{book}{
   author={Carleson, L.},
   author={Gamelin, T. W.},
   title={Complex dynamics},
   series={Universitext: Tracts in Mathematics},
   publisher={Springer-Verlag, New York},
   date={1993},
   pages={x+175},
   isbn={0-387-97942-5},
}
\bib{DM}{article}{
   author={Deaconu, V.},
   author={Muhly, P. S.},
   title={$C^*$-algebras associated with branched coverings},
   journal={Proc. Amer. Math. Soc.},
   volume={129},
   date={2001},
   pages={1077--1086},
   issn={0002-9939},
}
\bib{F}{book}{
   author={Forster, O.},
   title={Lectures on Riemann surfaces},
   series={Graduate Texts in Mathematics},
   volume={81},
   note={Translated from the German by Bruce Gilligan},
   publisher={Springer-Verlag, New York-Berlin},
   date={1981},
   pages={viii+254},
   isbn={0-387-90617-7},
}
\bib{KW05}{article}{
   author={Kajiwara, T.},
   author={Watatani, Y.},
   title={$C^\ast$-algebras associated with complex dynamical systems},
   journal={Indiana Univ. Math. J.},
   volume={54},
   date={2005},
   pages={755--778},
   issn={0022-2518},
}
\bib{KW06}{article}{
   author={Kajiwara, T.},
   author={Watatani, Y.},
   title={$C^\ast$-algebras associated with self-similar sets},
   journal={J. Operator Theory},
   volume={56},
   date={2006},
   pages={225--247},
   issn={0379-4024},
}
\bib{KW17}{article}{
   author={Kajiwara, T.},
   author={Watatani, Y.},
   title={Maximal abelian subalgebras of $C^\ast$-algebras associated with
   complex dynamical systems and self-similar maps},
   journal={J. Math. Anal. Appl.},
   volume={455},
   date={2017},
   pages={1383--1400},
   issn={0022-247X},
}
\bib{K}{article}{
   author={Katsura, T.},
   title={On $C^*$-algebras associated with $C^*$-correspondences},
   journal={J. Funct. Anal.},
   volume={217},
   date={2004},
   pages={366--401},
   issn={0022-1236},
}
\bib{Ku}{article}{
   author={Kumjian, A.},
   title={On $C^\ast$-diagonals},
   journal={Canad. J. Math.},
   volume={38},
   date={1986},
   pages={969--1008},
   issn={0008-414X},
}
\bib{P}{article}{
   author={Pimsner, M. V.},
   title={A class of $C^*$-algebras generalizing both Cuntz-Krieger algebras
   and crossed products by ${\bf Z}$},
   conference={
      title={Free probability theory},
      address={Waterloo, ON},
      date={1995},
   },
   book={
      series={Fields Inst. Commun.},
      volume={12},
      publisher={Amer. Math. Soc., Providence, RI},
   },
   date={1997},
   pages={189--212},
}
\bib{R1980}{book}{
   author={Renault, Jean},
   title={A groupoid approach to $C\sp{\ast} $-algebras},
   series={Lecture Notes in Mathematics},
   volume={793},
   publisher={Springer, Berlin},
   date={1980},
   pages={ii+160},
   isbn={3-540-09977-8},
}
\bib{R}{article}{
   author={Renault, J.},
   title={Cartan subalgebras in $C^*$-algebras},
   journal={Irish Math. Soc. Bull.},
   number={61},
   date={2008},
   pages={29--63},
}
\end{biblist}
\end{bibdiv}
\end{document}